\numberwithin{equation}{section}
\newtheorem{lem}{Lemma}[section]
\newtheorem{thm}{Theorem}[section]
\newtheorem{proposition}[thm]{Proposition}
\theoremstyle{remark}
\newtheorem{rmk}{Remark}[section]
\newtheorem{definition}[thm]{Definition}
\renewcommand{\tilde}{\widetilde}
\newcommand{\nn}{\nonumber}
\newcommand{\R}{{\mathbb R}}
\newcommand{\dt}{ \, {\rm d} t}
\newcommand{\dx}{ \, {\rm d} x}
\newcommand{\ds}{\, {\rm d} s}
\newcommand{\Eps}{\varepsilon}
\newtheorem{assum}{Assumption}
\newcommand{\EE}{\mathbb{E}}
\newcommand{\RR}{\mathbb{R}}
\newcommand{\PP}{\mathbb{P}}
\newcommand{\bfR}{\mathbf{R}}
\newcommand{\bfP}{{\pi}_{_{\partial D}}} 
\newcommand{\munuX}{\mu_t^{\nu,X}}
\newcommand{\rd}{\mathrm{d}}
\newcommand{\la}{\langle}
\newcommand{\ra}{\rangle}
\newcommand{\norm}[1]{\left\lVert#1 \, \right\rVert}
\newcommand{\viint}[2]{\left\langle#1, \, #2 \,\right\rangle}
\newcommand{\vpran}[1]{\left(#1\right)}
\newcommand{\npart}[2]{\tilde{X}_{\tau_{#1}}^{#2}}
\newcommand{\npartd}[2]{\tilde{X}_{\tau_{#1}}^{\nu, #2}}
\newcommand{\npartdw}[1]{\tilde{X}_{\tau_{#1}}^{\nu}}
\newcommand{\nempdif}[1]{\mu^{\nu,\tilde{X}}_{\tau_{#1}}}
\newcommand{\nemp}[1]{\mu^{\tilde{X}}_{\tau_{#1}}}
\newcommand{\klip}{K_2}
\newcommand{\knlip}{K_{3/2}}
\newcommand{\disk}{D_C}
\newcommand{\hp}{D_H}
\newcommand{\avW}[1]{E^\nu(#1)}
\begin{document}
\title[Zero-diffusion limit]{Zero-diffusion limit for aggregation equations\\ over bounded domains}
\author{Razvan C. Fetecau}  
\address{Department of Mathematics, Simon Fraser University, 8888 University Dr., Burnaby, BC V5A 1S6, Canada.}
\email{van@math.sfu.ca}

\author{Hui Huang}
\address{Department of Mathematics, Simon Fraser University, 8888 University Dr., Burnaby, BC V5A 1S6, Canada.}
\email{hha101@sfu.ca}

\author{Daniel Messenger}
\address{Department of Mathematics, Simon Fraser University, 8888 University Dr., Burnaby, BC V5A 1S6, Canada.}
\email{dmesseng@sfu.ca}
 
\author{Weiran Sun}
\address{Department of Mathematics, Simon Fraser University, 8888 University Dr., Burnaby, BC V5A 1S6, Canada.}
\email{weirans@sfu.ca}
 
\maketitle
\begin{abstract}
We establish the zero-diffusion limit for both continuous and discrete aggregation models over convex and bounded domains. Compared with a similar zero-diffusion limit derived in \cite{zhang2017continuity}, our approach is different and relies on a coupling method connecting PDEs with their underlying SDEs. Moreover, our result relaxes the regularity assumptions on the interaction and external potentials and improves the convergence rate (in terms of the diffusion coefficient). The particular rate we derive is shown to be consistent with numerical computations. 
\end{abstract}
{\small {\bf Keywords:}
	Reflecting process, coupling method, mean-field limit, Wasserstein metric, propagation of chaos.}
\section{Introduction}
In this paper we establish the zero-diffusion limit ($\nu\rightarrow 0$) of weak measure-valued solutions to the following aggregation-diffusion equation:
\begin{align}\label{PDEnu}
\begin{cases}
\partial_t\mu^\nu_t =\nu\triangle \mu^\nu_t+\nabla \cdot[\mu^\nu_t (\nabla K\ast \mu^\nu_t+\nabla V)] \,, 
\quad &  t>0\,,\\[2pt]
\mu^\nu_t|_{t=0}=\mu_0\,, \\[2pt]
\langle \nu\nabla\mu^\nu_t+\mu^\nu_t (\nabla K\ast \mu^\nu_t+\nabla V),n\rangle=0 \,,
\quad &\mbox{on } \partial D \,,
\end{cases}
\end{align}
where $\la,\ra$ denotes the dot product in $\RR^d$ $(d\geq 2)$, $K$ is an interaction potential, $V$ is an external potential,  $\mu_t^\nu$ is a probability measure  and the equation is set on $D\subset \RR^d$, a closed, bounded convex domain with smooth boundary. Also, $n$ denotes the outward unit normal to $\partial D$, $\nu>0$ is the diffusion coefficient, and the convolution is defined~ as
\begin{equation}
\nabla K\ast \mu^\nu_t(x):=\int_D \nabla K(x-y) \, \rd \mu_t^\nu(y)\,.
\end{equation}

The objective of this paper is to show how solutions to \eqref{PDEnu} converge in the zero-diffusion limit to solutions of the plain aggregation model given by
\begin{align}\label{PDE}
\begin{cases}
\partial_t\mu_t =\nabla \cdot[\mu_tP_x(\nabla K\ast \mu_t+\nabla V)] \,, 
\quad  t>0\,,\\[2pt]
\mu_t|_{t=0}=\mu_0\,,
 \\[2pt]
\end{cases}
\end{align}
where the projection operator $P_x:\RR^d\rightarrow \RR^d$ is defined by
\begin{align}
P_x(v)=\begin{cases}
v\,,\quad&\mbox{if }x \in \mbox{int}(D) \mbox{ or if }x\in\partial D \mbox{ and } v\cdot n\leq 0\,,\\
\Pi_{\partial D}v\,,\quad &\mbox{otherwise.}
\end{cases}
\end{align}
Here $\mbox{int}(D)$ denotes the interior of the domain $D$ and $\Pi_{\partial D}v$ represents the projection of $v$ onto the tangent plane of the boundary at $x \in \partial D$ \cite{carrillo2014nonlocal,wu2015nonlocal}. This means that $P_x(v)=v$ everywhere in $D$, except at points $x \in \partial D$ where $v$ points outward the domain, in which case $v$ is projected onto the tangent plane of the boundary. Therefore, particles hitting the boundary with an outward pointing velocity do not exit the domain but move freely along it. 

Models \eqref{PDEnu} and \eqref{PDE} appear in many applications including material science \cite{holm2006formation}, robotics \cite{gazi2003stability}, granular flow \cite{carrillo2003kinetic}, 
biological swarming \cite{bertozzi2012aggregation}, and Ginzburg-Landau theory  \cite{weinan1994dynamics}. Mathematically, the well-posedness of~\eqref{PDEnu} and~\eqref{PDE} have been established both in the free space $\R^d$ and over bounded domains. In the case of $\R^d$, the global well-posedness of equation~\eqref{PDEnu} in the space of probability measures is established in \cite[Section 11.2]{ambrosio2008gradient} for certain classes of potentials, by using the theory of gradient flows. Meanwhile, depending on the regularity of the potential $K$, the plain aggregation equation~\eqref{PDE} in $\R^d$ is shown to have finite-time blow-up or global solutions \cite{bertozzi2009blow,bertozzi2007finite,carrillo2011global}. 

In our study the interest lies in domains with boundaries which are relevant in many realistic physical settings. Mathematically, the well-posedness of~\eqref{PDE} in the probability measure space has been established over bounded domains with a mild regularity 
condition \cite{carrillo2014nonlocal} and over Riemannian manifolds \cite{wu2015nonlocal}. Both results are obtained by using the gradient flow method. For equations with diffusion, the well-posedness has been studied in \cite{bedrossian2011local,bertozzi2009existence} for models with degenerate and/or linear diffusion. In a more general setting where the domain is time-dependent, well-posedness of equations of types~\eqref{PDEnu} and~\eqref{PDE} are both justified in~\cite{zhang2017continuity} again via the gradient flow method.

Our present work is motivated by the recent study in \cite{fetecau2017swarm}, where the authors identified a flaw of the plain aggregation equation \eqref{PDE} in domains with boundaries: its solutions can evolve into unstable equilibria. This is an surprising degeneracy of model \eqref{PDE} given that it has a gradient flow formulation. One approach to rectify this degeneracy is to regularize equation \eqref{PDE} by adding a small diffusion term (as in model \eqref{PDEnu}), in other words, by including a small Brownian noise, as most realistic setups of the aggregation model would inherently have. An immediate question is to investigate how close the regularized system is to the original plain aggregation model. Several works have been done in such direction: 
In free space, it has been shown in \cite{evers2016metastable}  that the linear diffusion can remove the degeneracy of multiple unstable equilibria and lead to a unique steady state. In ~\cite{fetecau2017swarming} the authors investigated the regularization of \eqref{PDE} by nonlinear diffusion. Most relevant to our work in this paper, in \cite{zhang2017continuity} the author studied the zero-diffusion limit of the linear diffusion model in moving domains with boundaries.

Different from the gradient flow method used in~\cite{zhang2017continuity}, our approach to prove the zero-diffusion limit is via a coupling method by considering self-consistent stochastic processes associated to \eqref{PDEnu} and \eqref{PDE}.
To be more precise, let $(\Omega,\mathcal{F},(\mathcal{F}_t)_{t\geq0},\PP)$ be a probability space endowed with a filtration $(\mathcal{F}_t)_{t\geq0}$. Suppose that $(B_t)_{t\geq 0}$ is a $d$-dimensional $\mathcal{F}_t$-Brownian motion that is independent of the (initial) random variable $Y_0$ with distribution $\mu_0$. The self-consistent stochastic process $(Y^{\nu}_t)_{t\geq0}$ underlying equation \eqref{PDEnu} satisfies 
\begin{equation}\label{SDEnu}
	\left\{\begin{array}{l}
		Y^{\nu}_t=Y_0-\int_{0}^t\left(\nabla K\ast\mu_s^\nu(Y^{\nu}_s) +\nabla V(Y^{\nu}_s)\right)\ds+\sqrt{2\nu}{B}_t-\tilde R_t^\nu\,, \quad t>0\,,\\
		\tilde R_t^\nu=\int_0^tn(Y^{\nu}_s) \, {\rm d}|\tilde R^\nu|_s\,,\quad |\tilde R^\nu|_t=\int_0^t\textbf{1}_{\partial D}(Y^{\nu}_s) \, {\rm d}|\tilde R^\nu|_s\,,
	\end{array}\right.
\end{equation}
where $\mu_t^\nu=Law(Y_t^\nu)$ is the probability distribution (or law) of $Y_t^\nu$. Here $\tilde R_t^\nu$ is a reflecting process associated to $Y^{\nu}_t$  with a bounded total variation. Moreover, $|\tilde R^\nu|_t$ is the total variation of $\tilde R_t^\nu$ on $[0,t]$, namely
\begin{equation}
	|\tilde R^\nu|_t=\sup\sum\limits_k|\tilde R_{t_k}^\nu-\tilde R_{t_{k-1}}^\nu|\,,
\end{equation}
where the supremum is taken over all partitions such that $0=t_0<t_1<\cdots<t_n=t$.  

Solving SDE \eqref{SDEnu}  is known as the Skorokhod problem \cite{skorokhod1961stochastic,skorokhod1962stochastic}, which was introduced in 1961 to study an SDE with a reflecting diffusion process on half line. The multi-dimensional version of Skorokhod's problem was solved in \cite{tanaka1979stochastic}, where the domain $D$ was assumed to be convex. In \cite{lions1984stochastic} the authors relaxed the convexity assumption to domains satisfying certain admissibility conditions. 
Such admissibility assumption was subsequently removed in \cite{saisho1987stochastic} by applying the techniques used in \cite{tanaka1979stochastic}.

The self-consistent stochastic process associated with \eqref{PDE} has the form
\begin{equation}\label{SDE}
Y_t=Y_0+\int_{0}^tP_{Y_s}\left(-\nabla K\ast \mu_s(Y_s)-\nabla V(Y_s)\right)\ds, \quad t>0,\\
\end{equation}
where $\mu_t=Law(Y_t)$ is the probability distribution of $Y_t$ and the initial data $Y_0$ with distribution $\mu_0$ is chosen to be the same as in \eqref{SDEnu}.  Since  the randomness  of \eqref{SDE} only comes  from the initial data $Y_0$,  we can treat  it as a standard ODE for a random $\omega\in \Omega$ fixed. Well-posedness of \eqref{SDE} can be shown as in \cite[Lemma 2.4]{carrillo2014nonlocal} by using the theory of  differential inclusions \cite{filippov2013differential,edmond2006bv}. 

As a first step toward proving the zero-diffusion limit, we establish the well-posedness of the SDEs \eqref{SDEnu} and~\eqref{SDE} (Theorems \ref{wellSDEnu} and \ref{wellSDE}) and prove that the probability distribution $\mu_t^\nu$ of $Y_t^\nu$ is the weak solution to the aggregation-diffusion equation \eqref{PDEnu} (Theorem \ref{wellPDEnu}). 
These well-posedness results are proved under Assumption \ref{asum1} in Section~\ref{sect:zero-diff} for the interaction and external potentials $K$ and $V$. We note that well-posedness of~\eqref{SDEnu} has been shown in the literature for various types of potentials: in \cite{sznitman1984nonlinear} $\nabla K$ is assumed to be bounded and Lipschitz. In \cite{choi2016propagation} a specific type of $K$ with $\nabla K$ being bounded discontinuous was considered. In \cite{fetecau2018propagation} well-posedness of~\eqref{SDEnu} was shown when $K$ is the Newtonian potential. The particular type of potentials in our paper is not covered in the previous cases. Nonetheless, our method is similar to the ones used in~\cite{choi2016propagation, fetecau2018propagation} in the sense that we regularize the potentials first and prove the convergence of the regularized solutions. 

Next we prove our first main result concerning the zero-diffusion limit of solutions to \eqref{PDEnu} (Theorem \ref{mainthm1}). This is achieved by using a coupling method to show the convergence of $\mu_t^\nu$ to $\mu_t$ via the comparison of $Y_t^\nu$ with $Y_t$. The precise estimate is 
\begin{equation}
\mathcal{W}_2^2(\mu_t^\nu,\mu_t)
\leq  
\EE\left[|Y_t^\nu-Y_t|^2\right]
\leq O (\nu).
\end{equation}
Compared with the zero-diffusion limit established in~\cite{zhang2017continuity},
our result improves in two respects: i) a sharper (and believed to be optimal) convergence rate and ii) relaxed regularity assumptions on the interaction and external potentials. Specifically, the convergence rate established in \cite{zhang2017continuity} is $O(\nu^{\frac{1}{d+2}})$. This is obtained by bounding $\rho^\nu_t$, the density function of measure $\mu_t^\nu$, in its $L^\infty$-norm. 
In our case, we only need to work with $\mu^\nu_t$ as a probability measure so that $\mu^\nu_t(D) = 1$. This improves the convergence rate to $O(\nu)$; we also note here that the numerical results presented in our paper suggest that this is the optimal rate. The second improvement is to reduce the requirement on the interaction and external potentials: we assume $K$ and $V$ to be $C^1$ and $\lambda$-convex,
while in~\cite{zhang2017continuity} they are assumed to be  $C^2$.

In the second part of this article (Sections \ref{sect:particle} and \ref{sect:numerics}), we show the zero-diffusion limit on the discrete level and prove the mean-field limit of the particle system with the Brownian motion. This system approximates \eqref{PDEnu} and has the form
\begin{align}\label{Rparticlenu}
\begin{cases}
{X}_t^{\nu,i}={X}^i_0- \int_{0}^t\Bigl(\frac{1}{N-1}\sum\limits_{j\neq i}^N\nabla K ({X}_s^{\nu,i} \!-\! {X}_s^{\nu,j}) +\!\nabla V({X}_s^{\nu,i})\Bigr) \!\ds+\sqrt{2\nu}{B}_t^{\nu,i} \!-\!R_t^{\nu,i},\quad 1 \leq i \leq N, \,\, t>0, \\[3pt]
R_t^{\nu,i}=\int_0^tn(X_t^{\nu,i}) \, {\rm d} |R^{\nu,i}|_s,\quad |R^{\nu,i}|_t=\int_0^t\textbf{1}_{\partial D}(X_s^{\nu,i}) \, {\rm d}|R^{\nu,i}|_s \,,
\end{cases}
\end{align}
where the initial data $\{{X}^i_0\}_{i=1}^N$ are i.i.d. random variables with a common probability distribution $\mu_0(x)$. 
We note that the existence of solutions to system \eqref{Rparticlenu} has been shown in \cite[Theorem 1.1]{choi2016propagation}. 
The corresponding deterministic particle system approximating \eqref{PDE} introduced in \cite[Theorem 2.6]{carrillo2014nonlocal} has the form 
\begin{align}\label{Rparticle}
{X}_t^{i}={X}^i_0+\frac{1}{N-1}\sum\limits_{j\neq i}^N \int_{0}^tP_{X_s^i}\left(-\nabla K({X}_s^{i}-{X}_s^{j}) -\nabla V({X}_s^{i})\right)\ds,\quad i=1,\cdots, N, \quad t>0\,,
\end{align}
where the initial data $\{{X}^i_0\}_{i=1}^N$ are i.i.d. random variables with the same probability distribution $\mu_0(x)$. The well-posedness of~\eqref{Rparticle} and its mean-field limit are both established in~\cite{carrillo2014nonlocal}. 

Let $\munuX$ and  $\mu_t^{X}$ denote the empirical measures associated to \eqref{Rparticlenu} and \eqref{Rparticle}, respectively: 
\begin{align*}
   \munuX=\frac{1}{N}\sum\limits_{i=1}^N\delta_{X_t^{\nu,i}} \,,
\qquad
   \mu_t^{X} =\frac{1}{N}\sum\limits_{i=1}^N\delta_{X_t^{i}} \,.
\end{align*}
The main result (Theorem \ref{thmparticle}) in this part of the paper is to show the convergence of $\munuX$ to $\mu_t^{X}$ with the quantitative estimate:
\begin{equation}
	\EE\left[\mathcal{W}_\infty^2(\munuX,\mu_t^X)\right]\leq O(\nu).
\end{equation}
In addition, we show that the empirical measure $\munuX$ associated to the particle system \eqref{Rparticlenu} approximates the solution $\mu_t^\nu$ to equation \eqref{PDEnu} as the number $N$ of particles goes to infinity (see Theorem \ref{thmmean}). Such mean-field limit has been proved for various types of potentials.  
For example,  in the case of $\R^d$, the mean-field limits are justified for $K$ being the Newtonian potential \cite{HH1,HH2,garcia2017,fournier2015stochastic,huilearning}, the  Biot-Savart potential \cite{fournier2014propagation}, and a delta distribution \cite{chen2018modeling}. In the case of domains with boundaries, 
the mean-field limit is derived in \cite{sznitman1984nonlinear} for particle systems with reflecting boundary conditions and $\nabla K$ being bounded Lipschitz. In \cite{choi2016propagation} a particular type of bounded discontinuous interacting force was considered. More recently, in \cite{fetecau2018propagation} the authors justified the mean-field limit for the particle system interacting through the Newtonian potential. As for \textit{deterministic} particle methods (without the diffusion term), we refer the reader to \cite{craig2014blob,carrillo2017blob} (see also  \cite{chertock2017practical,jabin2017mean} and references therein for a comprehensive review).


Finally, within the discrete framework we perform careful numerical tests to verify numerically the convergence rate and to gain further insight on how small diffusion regularizes model \eqref{PDE}.
 
The paper is organized as follows. In Section \ref{sect:prelims}, we give a brief introduction to the Wasserstein metric and provide definitions of weak solutions to equations \eqref{PDEnu} and  \ref{PDE}. In Section \ref{sect:zero-diff}, we prove the main result on the zero-diffusion limit after showing the well-posedness of SDEs \eqref{SDEnu} and \eqref{SDE}. Section \ref{sect:particle} presents particle methods for equations  \eqref{PDEnu}  and \eqref{PDE}, where we further establish the zero-diffusion limit result at particle level. Section \ref{sect:numerics} is devoted to the numerical justification of the convergence rate.

\section{Preliminaries}
\label{sect:prelims}
\subsection{The $p$-Wasserstein space}
In order to show the convergence of weak solutions, we give a brief introduction on the topology of the $p$-Wasserstein space.
Consider the following space
\begin{eqnarray}\label{p1d}
\mathcal{P}_p(D)=\left\{f\in\mathcal{P}(D):~\int_{D}|x|^p {\rm d}f(x)<+\infty\right\}\,,
\end{eqnarray}
where $\mathcal{P}(D)$ denotes all the probability measures on $D$.
We denote the $p$-Wasserstein  distance in $\mathcal{P}_p(D)$ as follows
\begin{equation}\label{wass}
\mathcal {W}_p(f,g)=\left(\inf_{\pi\in\Lambda(f,~g)}\Big\{\int_{D\times D}|x-y|^p\rm d\pi(x,y)\Big\}\right)^{\frac{1}{p}}=\left(\inf_{X\sim f,Y\sim g}\Big\{\EE[|X-Y|^p]\Big\}\right)^{\frac{1}{p}},
\end{equation}
where $\Lambda(f,~g)$ is the set of joint probability measures on ${D}\times{D}$ with marginals $f$ and $g$ respectively and $(X,Y)$ are all possible couples of random variables with $f$ and $g$ as respective distributions.  We will also use the infinite Wasserstein distance $\mathcal {W}_\infty$ defined as
\begin{equation}\label{Winf}
\mathcal {W}_\infty(f,g):=\inf_{X\sim f,Y\sim g}\PP\mbox{-ess sup }|X-Y|=\inf_{\pi\in \Lambda(f,~g)}\pi\mathop{\mbox{-ess sup }}\limits_{(x,y)\in D\times D}|x-y|\,,
\end{equation}
where
\begin{align}\label{ppess}
\PP\mbox{-ess sup }|X-Y|:=\inf\big \{\lambda\geq 0: \PP(|X-Y|>\lambda)=0\big\}\,,
\end{align}
and
\begin{align}\label{piess}
\pi \mathop{\mbox{-ess sup }}\limits_{(x,y)\in D\times D} |x - y|:=\inf\big \{\lambda\geq 0: \pi(\{(x,y)\in D\times D: |x-y|> \lambda\})=0\big\}\,.
\end{align}
Moreover, it holds that for all $ p\leq q\leq \infty$,
\begin{equation}\label{wrela}
\mathcal {W}_p(f,g)\leq \mathcal {W}_q(f,g)\leq (\mbox{diam }D)^{\frac{q-1}{q}}\mathcal {W}_1(f,g)^{\frac{1}{q}},
\end{equation}
and $\mathcal {W}_q(f,g)\rightarrow \mathcal {W}_\infty(f,g)$ as $q\rightarrow \infty$.
We refer readers to books \cite{ambrosio2008gradient,villani2008optimal} for further background.

Moreover, in \cite[Theorem 6.18]{villani2008optimal}, it has been shown that for any $p \geq 1$, the space $\mathcal{P}_p(D)$ endowed with the metric $\mathcal{W}_p$ is a complete metric space. For bounded domains, the convergence in $(\mathcal{P}_p(D),\mathcal{W}_p)$ is equivalent to the weak-$\ast$ convergence in the sense that
\begin{align}\label{starconv}
\mathcal{W}_p(f_n,f)\rightarrow 0\Longleftrightarrow f_n\rightarrow f \mbox{ weak-}\ast \mbox{ in } \mathcal{P}(D) \,.
\end{align}

\subsection{Definitions of weak solutions}
Next we define the notion of weak measure-valued solutions for equations \eqref{PDEnu} and \eqref{PDE}. Recall that  a  curve $\mu(t)\in AC^2_{loc}((a,b);\mathcal{P}_2(D))$ is locally 2-absolutely continuous if there exists $m\in L^2_{loc}((a,b))$ such that
\begin{equation}
\mathcal{W}_2(\mu(t),\mu(s))\leq \int_{s}^{t} m(r){\rm d}r\,,\quad\mbox{for all }t,s\in(a,b)\mbox{ with }a<s\leq t<b\,.
\end{equation}

\begin{definition}\label{weaksolutionpde}
	For $\mu_0\in \mathcal{P}_2(D)$, a curve $\mu_t^\nu \in AC^2_{loc}([0,T];\mathcal{P}_2(D))$ is a weak solution to  \eqref{PDEnu} if
	it holds that
	\begin{align}\label{Sindensityfun}\nonumber
	\int_0^T \int_D \left[\partial_t \phi (t,x)-\la \nabla \phi (t,x), \nabla K\ast\mu^{\nu}_t(x) +\nabla V(x)\ra +\nu \Delta \phi (t,x)\ra
	\right]\rd \mu_t^{\nu} (x) \dt=0\,,\quad \mu_t^\nu|_{t=0} =\mu_0,
	\end{align}
	for any test function $\phi\in {C}_c^\infty(D\times (0,T))$ satisfying $\langle \nabla\phi,n \rangle=0$ on $\partial D$.
\end{definition}
The well-posedness of measure-valued solutions to equation \eqref{PDEnu} with Assumption \ref{asum1} specified in Section~\ref{sect:zero-diff} follows from the theory of gradient flows \cite[Theorem 11.2.8]{ambrosio2008gradient}. Note that by the regularizing effect of the Wassertein semigroup, the solution $\mu_t^\nu$ is actually absolutely continuous with respect to the Legesgue measure $\mathcal{L}^d$ for all $t>0$.

\begin{definition}\label{weaksolutionpdenu}
	For $\mu_0\in\mathcal{P}_2( D)$, a curve $\mu_t \in AC^2_{loc}([0,T];\mathcal{P}_2( D))$ is a weak solution to  \eqref{PDE} if
	\begin{equation*}
	P_x\left( -\nabla K\ast \mu_t-\nabla V\right)\in L_{loc}^1([0,T];L^2(\mu_t))\,,
	\end{equation*}
	and if holds that 
	\begin{align}\label{Sindensityfun'}\nonumber
	\int_0^T \int_D \left[\partial_t \phi (t,x)+\la \nabla \phi (t,x), P_x\left( -\nabla K\ast \mu_t(x)-\nabla V(x)\right)\ra
	\right]\rd \mu_t(x) \dt=0\,,\quad \mu_t|_{t=0} =\mu_0,
	\end{align}
	for any test function $\phi\in {C}_c^\infty( D\times (0,T))$, where supp $\mu_t(x)\subset D$ for all $t\in[0,T]$.
\end{definition}
The well-posedness of weak solutions to the plain aggregation model \eqref{PDE} was proved in \cite[Theorem 1.5 and Theorem 1.6]{carrillo2014nonlocal} with Assumption \ref{asum1} for $K, V$ given in Section~\ref{sect:zero-diff} .

\section{Zero-diffusion limit}
\label{sect:zero-diff}
In this section we prove the convergence of weak solutions through the coupling method. The main assumptions on the interaction potential $K$, external potential $V$  and domain $D$ are
\begin{assum}\label{asum1}
	\quad
	\begin{enumerate}
		\item $D\subset \RR^d$ is bounded and convex  and $\partial D\in C^1$. We denote by $D-D:=\{x-y|~x,y\in D\}$.
		\item $K(x)=K(-x)$ for  all $x\in\RR^d$.
		\item $K\in C^1(D-D)$ and it  is $\lambda_K$-convex on $ D- D$ for some $\lambda_K\in\RR$, which implies that 
		\begin{equation}\label{lamcon}
		\la \nabla K(x)-\nabla K(y),x-y\ra\geq \lambda_K |x-y|^2\,.
		\end{equation}
		\item $V\in C^1(D)$ and it  is $\lambda_V$-convex on $D$ for some $\lambda_V\in\RR$, which implies that 
		\begin{equation}\label{lamcon1}
		\la  \nabla V(x)- \nabla V(y),x-y\ra\geq \lambda_V |x-y|^2\,.
		\end{equation}
		Furthermore, we denote
		\begin{equation}\label{lambda}
		\lambda_K^-:=\min\{0,\lambda_K\}\leq 0,\quad \lambda_V^-:=\min\{0,\lambda_V\}\leq 0.
		\end{equation}
	\end{enumerate}
\end{assum}
\begin{rmk}
	Recall that a funtion $f\in C^1(D)$ is $\lambda$- convex on a convex set $D$ if 
	\begin{equation}
	f(y)\geq f(x)+\la\nabla f(x),y-x\ra+\frac{\lambda}{2}|y-x|^2
	\qquad \text{for any $x,y\in D$.}
	\end{equation}
	Note that $\lambda$- convexity is a weaker regularity than $C^2$. For example, the function $f(x) = |x|^{\frac{3}{2}}$ is $\lambda$- convex but not $C^2$ on $[0, 1]$.
\end{rmk}

\subsection{ Well-posedness of the self-consistent stochastic processes}

Next we prove the well-posedness of SDE \eqref{SDEnu}. To this end, let $J(x)$ be a blob function such that
\begin{align*}
J \in C^\infty(\mathbb{R}^d) \,,
\quad
J \geq 0 \,,
\quad
\mbox{supp}\,J(x)\subset {B}(0, 1)\,,
\quad
\int_{{B}(0, 1)}J(x) \dx=1\,.
\end{align*}
For any $\Eps > 0$, define $J_\varepsilon(x)=\frac{1}{\varepsilon^d}J(\frac{x}{\varepsilon})$ and
\begin{align} \label{def:F-Eps}
K_\varepsilon(x)= J_\varepsilon\ast K(x),\quad V_\varepsilon(x)= J_\varepsilon\ast V(x) \,.
\end{align}
Replacing $K$ by $K_\Eps$ in~\eqref{PDEnu}, one has the regularized aggregation equation
\begin{align}\label{rePDEnu}
\begin{cases}
\partial_t\mu^{\nu,\varepsilon}_t =\nu\triangle \mu^{\nu,\varepsilon}_t+\nabla \cdot[\mu^{\nu,\varepsilon}_t (\nabla K_\varepsilon\ast \mu^{\nu,\varepsilon}_t+\nabla V_\varepsilon)] \,, 
\quad &t>0\,,\\[2pt]
\mu^{\nu,\varepsilon}_t|_{t=0}=\mu_0\,,
 \\[2pt]
\langle \nu\nabla\mu^{\nu,\varepsilon}_t+\mu^{\nu,\varepsilon}_t (\nabla K_\varepsilon \ast \mu^{\nu,\varepsilon}_t+\nabla V_\varepsilon),n\rangle=0 \,,
\quad &\mbox{on }\partial D \,.
\end{cases}
\end{align}
Since $K_\varepsilon,V_\varepsilon\in C^2(D)$ for any fixed $\varepsilon$, the regularized PDE \eqref{rePDEnu} has a unique weak solution $\mu^{\nu,\varepsilon}_t\in AC^2_{loc}([0,T];\mathcal{P}_2(D))$ (\cite[Theorem 4.1, Theorem 4.2]{zhang2017continuity}).

The self-consistent process underlying equation \eqref{rePDEnu} is
\begin{equation}\label{RSDEnu}
\left\{\begin{array}{l}
Y^{\nu,\varepsilon}_t=Y_0-\int_{0}^t\left(\nabla K_\varepsilon\ast\mu_s^{\nu,\varepsilon}(Y^{\nu,\varepsilon}_s) +\nabla V_\varepsilon(Y^{\nu,\varepsilon}_s)\right) \ds+\sqrt{2\nu}{B}_t-\tilde R_t^{\nu,\varepsilon}, \quad t>0,\\
\tilde R_t^{\nu,\varepsilon}=\int_0^tn(Y^{\nu,\varepsilon}_s) \, {\rm d}|\tilde R^{\nu,\varepsilon}|_s,\quad |\tilde R^{\nu,\varepsilon}|_t=\int_0^t\textbf{1}_{\partial D}(Y^{\nu,\varepsilon}_s) \, {\rm d}|\tilde R^{\nu,\varepsilon}|_s,\\
\end{array}\right.
\end{equation}
where $\mu_t^{\nu,\varepsilon}=Law(Y_t^{\nu,\varepsilon})$ is the law of $Y_t^{\nu,\varepsilon}$. With the help of the well-posedness of the equation \eqref{rePDEnu}, we also have the existence and uniqueness of the SDE \eqref{RSDEnu}. Moreover, we prove that the law of $Y_t^{\nu,\varepsilon}$ is exactly the weak solution to the PDE \eqref{rePDEnu}.
\begin{proposition}\label{pdetosde}
	Let $\mu^{\nu,\varepsilon}_t\in AC^2_{loc}([0,T];\mathcal{P}_2(D))$ be the unique weak solution to the PDE \eqref{rePDEnu} with the initial data $\mu_0$. Assume that $Y_0$ has the distribution $\mu_0$ and $(B_t)_{t\geq 0}$ is a Brownian motion independent of $Y_0$. Then the SDE \eqref{SDEnu} has a unique solution $(Y_t^{\nu,\varepsilon},\tilde R_t^{\nu,\varepsilon})$ with $Law(Y_t^{\nu,\varepsilon})=\mu_t^{\nu,\varepsilon}$.
\end{proposition}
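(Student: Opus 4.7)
The strategy is to decouple the self-consistency in \eqref{RSDEnu} by using the already-known PDE solution as the source of the drift, and then close the loop via the uniqueness of weak solutions to \eqref{rePDEnu}. Because $K_\varepsilon,V_\varepsilon\in C^2(D)$ and $\mu_t^{\nu,\varepsilon}$ is a fixed probability measure, the vector field
\[
b_\varepsilon(t,x):=-\nabla K_\varepsilon\ast\mu_t^{\nu,\varepsilon}(x)-\nabla V_\varepsilon(x), \qquad (t,x)\in[0,T]\times D,
\]
is bounded and Lipschitz in $x$, uniformly in $t$, with continuity in $t$ inherited from $t\mapsto\mu_t^{\nu,\varepsilon}\in\mathcal{P}_2(D)$.

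First, I would consider the decoupled reflected SDE on the convex domain $D$:
\begin{equation}\label{decoupled-SDE-plan}
\left\{\begin{array}{l}
\bar Y_t=Y_0+\int_0^t b_\varepsilon(s,\bar Y_s)\,\rd s+\sqrt{2\nu}\,B_t-\bar R_t,\quad t\in[0,T],\\[2pt]
\bar R_t=\int_0^t n(\bar Y_s)\,\rd|\bar R|_s,\quad |\bar R|_t=\int_0^t\mathbf{1}_{\partial D}(\bar Y_s)\,\rd|\bar R|_s.
\end{array}\right.
\end{equation}
Since $D$ is bounded, convex, and smooth, Tanaka's solution of the Skorokhod problem \cite{tanaka1979stochastic} (or \cite{saisho1987stochastic,lions1984stochastic}) gives existence and pathwise uniqueness of a strong solution $(\bar Y_t,\bar R_t)$ to \eqref{decoupled-SDE-plan} for any bounded Lipschitz time-dependent drift. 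This produces a candidate solution, decoupled from its own law.

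Second, I would apply It\^o's formula to $\phi(t,\bar Y_t)$ for an arbitrary test function $\phi\in C_c^\infty(D\times(0,T))$ with $\langle\nabla\phi,n\rangle=0$ on $\partial D$. The stochastic integral against $B_t$ has zero expectation, and the reflection contribution
\[
\int_0^T\langle\nabla\phi(s,\bar Y_s),n(\bar Y_s)\rangle\,\rd|\bar R|_s
\]
vanishes identically because $|\bar R|_s$ only increases when $\bar Y_s\in\partial D$, where $\langle\nabla\phi,n\rangle=0$. Taking expectation and setting $\bar\mu_t:=\mathrm{Law}(\bar Y_t)$, one obtains precisely the weak formulation of \eqref{rePDEnu} from Definition \ref{weaksolutionpde}, with the drift $b_\varepsilon$ built from the prescribed measure $\mu_t^{\nu,\varepsilon}$:
\[
\int_0^T\!\!\int_D\!\bigl[\partial_t\phi-\langle\nabla\phi,\nabla K_\varepsilon\ast\mu_s^{\nu,\varepsilon}+\nabla V_\varepsilon\rangle+\nu\Delta\phi\bigr]\rd\bar\mu_t\,\rd t=0,\quad \bar\mu_0=\mu_0.
\]
A short argument using the boundedness of $|\bar Y_t|$ shows $\bar\mu_\cdot\in AC^2_{loc}([0,T];\mathcal{P}_2(D))$.

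Third, I would close the loop: $\bar\mu_t$ and $\mu_t^{\nu,\varepsilon}$ both belong to $AC^2_{loc}([0,T];\mathcal{P}_2(D))$ and both solve the same linear Fokker--Planck equation
\[
\partial_t\rho=\nu\Delta\rho+\nabla\cdot[\rho\,(\nabla K_\varepsilon\ast\mu^{\nu,\varepsilon}_t+\nabla V_\varepsilon)]
\]
with the same initial datum $\mu_0$ and the same no-flux boundary condition. Uniqueness for this \emph{linear} non-autonomous problem (again via \cite[Theorem~4.1]{zhang2017continuity} or a direct duality argument) yields $\bar\mu_t=\mu_t^{\nu,\varepsilon}$ for all $t\in[0,T]$. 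Therefore $(\bar Y_t,\bar R_t)$ solves the original self-consistent SDE \eqref{RSDEnu} and satisfies $\mathrm{Law}(\bar Y_t)=\mu_t^{\nu,\varepsilon}$. Uniqueness for \eqref{RSDEnu} follows by running the same argument in reverse: any solution has its law satisfying the weak form of \eqref{rePDEnu} with the self-consistent drift, which by PDE uniqueness equals $\mu_t^{\nu,\varepsilon}$, so every solution solves the decoupled SDE \eqref{decoupled-SDE-plan} and hence coincides with $\bar Y_t$ by pathwise uniqueness there.

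The main obstacle I anticipate is the rigorous handling of the It\^o step: in particular, ensuring that the Neumann-type boundary condition $\langle\nabla\phi,n\rangle=0$ precisely absorbs the bounded-variation reflection term (so that no extra boundary integral survives in the weak form), and verifying the requisite absolute continuity and moment bounds for $\bar\mu_t$ so that PDE uniqueness from \cite{zhang2017continuity} applies. All remaining ingredients are routine given the smoothness of $K_\varepsilon,V_\varepsilon$ and the convexity of $D$.
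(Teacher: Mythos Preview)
Your proposal is correct and follows essentially the same approach as the paper: decouple the self-consistent SDE by inserting the known PDE solution $\mu_t^{\nu,\varepsilon}$ into the drift, solve the resulting reflected SDE with bounded Lipschitz drift (the paper cites \cite[Theorem 1.1]{sznitman1984nonlinear} and \cite[Lemma 3.1]{choi2016propagation} rather than Tanaka directly), apply It\^o's formula with Neumann test functions so the reflection term drops out, and close the loop via uniqueness of the linear Fokker--Planck equation. Your explicit reverse argument for uniqueness of the self-consistent SDE is slightly more detailed than the paper's, but the substance is identical.
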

\begin{proof}
	Let $\mu^{\nu,\varepsilon}_t$ be the unique solution of \eqref{rePDEnu} with the initial data $\mu^{\nu,\varepsilon}_0=\mu_0$. For any fixed $\varepsilon$, $\nabla K_\varepsilon\ast \mu^{\nu,\varepsilon}_t$ and $\nabla V_\varepsilon$ are bounded and Lipschitz. Therefore, according to  \cite[Theorem 1.1]{sznitman1984nonlinear} or \cite[Lemma 3.1]{choi2016propagation}, for any $T > 0$,  the SDE
	\begin{equation}
	\left\{\begin{array}{l}
	\overline Y^{\nu,\varepsilon}_t=Y_0-\int_{0}^t\left(\nabla K_\varepsilon\ast\mu^{\nu,\varepsilon}_s(\overline Y^{\nu,\varepsilon}_s) +\nabla V_\varepsilon(\overline Y^{\nu,\varepsilon}_s)\right) \ds+\sqrt{2\nu}{B}_t-\tilde R_t^{\nu,\varepsilon}, \quad t>0,\\
	\tilde R_t^{\nu,\varepsilon}=\int_0^tn(\overline Y^{\nu,\varepsilon}_s) \, {\rm d}|\tilde R^{\nu,\varepsilon}|_s,\quad |\tilde R^{\nu,\varepsilon}|_t=\int_0^t\textbf{1}_{\partial D}(\overline Y^{\nu,\varepsilon}_s) \, {\rm d}|\tilde R^{\nu,\varepsilon}|_s,\\
	\end{array}\right.
	\end{equation}
	where $Law(Y_0)=\mu_0$, has a unique solution $(\overline Y_t^{\nu,\varepsilon},\tilde R_t^{\nu,\varepsilon})$  up to time $T > 0$.  Here $\mu^{\nu,\varepsilon}_t$ is prescribed as the weak solution to the PDE and we do not know whether or not it is the distribution of $\overline Y_t^{\nu,\varepsilon}$ yet.
	
	Let $Law(\overline Y_t^{\nu,\varepsilon})=\overline \mu_t^{\nu,\varepsilon}$, then we have from  It\^{o}'s formula \cite{brent} for any test function $\phi\in {C}_c^\infty(D\times (0,T))$ satisfying $\la \nabla\phi(t,x),n(x)\ra =0$ on $x\in \partial D$ that
	\begin{align}\label{ito1}
	\phi (t,\overline Y_t^{\nu,\varepsilon})-\phi (0,Y_0)&=\int_0^t \left[\partial_s \phi (s,\overline Y_s^{\nu,\varepsilon})-\la \nabla \phi (s, \overline Y_s^{\nu,\varepsilon}), \nabla K_\varepsilon\ast\mu^{\nu,\varepsilon}_s(\overline Y^{\nu,\varepsilon}_s) +\nabla V_\varepsilon(\overline Y^{\nu,\varepsilon}_s)\ra +\nu \Delta \phi (s,\overline Y_s^{\nu,\varepsilon})
	\right]\ds\notag \\
	&\quad -\int_0^t \la \nabla \phi (s,\overline Y_s^{\nu,\varepsilon}),  \rd \tilde R_s^{\nu,\varepsilon}\ra +\sqrt{2\nu}\int_0^t \la \nabla \phi (s,\overline Y_s^{\nu,\varepsilon}), \rd{B}_s\ra,
	\end{align}
	Using the boundary condition one has
	\begin{equation*}
	\int_0^t \la \nabla \phi (s,\overline Y_s^{\nu,\varepsilon}),  \rd \tilde R_s^{\nu,\varepsilon}\ra =\int_0^t \la \nabla \phi (s,\overline Y_s^{\nu,\varepsilon}), n(\overline Y_s^{\nu,\varepsilon}) \ra  \rd |\tilde R^{\nu,\varepsilon}|_s=0.
	\end{equation*}
	By taking expectation on \eqref{ito1}, we get
	\begin{align}\label{weak}
	\int_0^T \int_D \left[\partial_s \phi (s,x)-\la \nabla \phi (s,x), \nabla K_\varepsilon\ast\mu^{\nu,\varepsilon}_s(x) +\nabla V_\varepsilon(x)\ra +\nu \Delta \phi (s,x)\ra
	\right]\rd \overline \mu_t^{\nu,\varepsilon} (x) \ds=0\,,
	\end{align}
	where we used the fact that
	\begin{equation*}
	\phi (T,\overline Y_t^{\nu,\varepsilon})=\phi (0,Y_0)=0.
	\end{equation*}
	This  means that $\overline \mu_t^{\nu,\varepsilon}$ is a weak solution to the following linear PDE
	\begin{align}\label{linearrePDEnu}
	\begin{cases}
	\partial_t\overline\mu^{\nu,\varepsilon}_t =\nu\triangle \overline \mu^{\nu,\varepsilon}_t+\nabla \cdot[\overline\mu^{\nu,\varepsilon}_t (\nabla K_\varepsilon\ast \mu^{\nu,\varepsilon}_t+\nabla V_\varepsilon)] \,, 
	\quad & t>0\,,\\[2pt]
	\overline\mu^{\nu,\varepsilon}_t|_{t=0}=\mu_0\,,
	\\[2pt]
	\langle \nu\nabla\overline\mu^{\nu,\varepsilon}_t+\overline\mu^{\nu,\varepsilon}_t (\nabla K_\varepsilon \ast \mu^{\nu,\varepsilon}_t+\nabla V_\varepsilon),n\rangle=0 \,,
	\quad &\mbox{on }\partial D \,.
	\end{cases}
	\end{align}
By the uniqueness of the linear PDE~\eqref{linearrePDEnu} and the assumption that  $\mu_t^{\nu,\varepsilon}$ is a unique solution to PDE \eqref{rePDEnu}, we obtain that $\overline \mu_t^{\nu,\varepsilon}=\mu_t^{\nu,\varepsilon}$. Hence the unique solution to SDE \eqref{RSDEnu} is given by
	$(\overline Y_t^{\nu,\varepsilon},\tilde R_t^{\nu,\varepsilon})$ with $Law(\overline Y_t^{\nu,\varepsilon})=\overline \mu_t^{\nu,\varepsilon}=\mu_t^{\nu,\varepsilon}$.
\end{proof}

The following lemma plays a crucial role in the proof of the existence for the nonlinear SDE \eqref{SDEnu}, 
\begin{lem}\label{lmconvex}
	Let $X_1, X_2$ be two random variables with distributions $\mu_1, \mu_2$ respectively (${X}_1$  and ${X}_2$ may not be independent). Suppose that $K$ satisfies  Assumption \ref{asum1}. Then it holds that
	\begin{equation}
	\EE\left[-\viint{X_1-X_2}{\nabla K\ast\mu_1(X_1)-\nabla K\ast\mu_2(X_2)}\right]\leq -\lambda_K^-\EE[|X_1-X_2|^2],
	\end{equation}
	where $\lambda_K^-$ is defined in \eqref{lambda}.
\end{lem}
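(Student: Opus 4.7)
The plan is to introduce an independent copy $(X_1', X_2')$ of the pair $(X_1, X_2)$ on some extended probability space, so that the convolutions are rewritten as expectations:
\begin{equation*}
\nabla K\ast\mu_i(X_i) = \EE'\!\left[\nabla K(X_i - X_i')\right], \quad i=1,2,
\end{equation*}
where $\EE'$ denotes expectation with respect to the independent copy. The quantity we must bound then becomes
\begin{equation*}
I := -\EE\!\left[\viint{X_1-X_2}{\nabla K(X_1-X_1') - \nabla K(X_2-X_2')}\right].
\end{equation*}

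The key step is a symmetrization: since $(X_1',X_2')$ is distributed identically to $(X_1,X_2)$, relabeling gives
\begin{equation*}
I = -\EE\!\left[\viint{X_1'-X_2'}{\nabla K(X_1'-X_1) - \nabla K(X_2'-X_2)}\right],
\end{equation*}
and the symmetry $K(x)=K(-x)$ flips the sign of $\nabla K$ inside. Averaging the two expressions yields
\begin{equation*}
I = -\tfrac{1}{2}\EE\!\left[\viint{(X_1-X_1')-(X_2-X_2')}{\nabla K(X_1-X_1')-\nabla K(X_2-X_2')}\right].
\end{equation*}
Applying $\lambda_K$-convexity \eqref{lamcon} to the pair $u=X_1-X_1'$, $v=X_2-X_2'$, one obtains
\begin{equation*}
I \leq -\tfrac{\lambda_K}{2}\,\EE\!\left[|(X_1-X_1')-(X_2-X_2')|^2\right].
\end{equation*}

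It remains to compare the right-hand side with $-\lambda_K^-\,\EE[|X_1-X_2|^2]$. Expanding the square and using independence of $(X_1',X_2')$ from $(X_1,X_2)$ (with the covariance term reducing to $|\EE[X_1-X_2]|^2\geq 0$) gives
\begin{equation*}
\EE\!\left[|(X_1-X_1')-(X_2-X_2')|^2\right] \leq 2\,\EE[|X_1-X_2|^2].
\end{equation*}
If $\lambda_K\geq 0$, then the prefactor $-\lambda_K/2\leq 0$ makes $I\leq 0 = -\lambda_K^-\EE[|X_1-X_2|^2]$; if $\lambda_K<0$, then $-\lambda_K/2>0$ and the above inequality yields $I\leq -\lambda_K\,\EE[|X_1-X_2|^2]=-\lambda_K^-\EE[|X_1-X_2|^2]$. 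In both cases the conclusion follows.

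The only non-routine step is the symmetrization via an independent copy combined with the evenness of $K$; once the integrand is put in the symmetric form above, $\lambda_K$-convexity applies directly, and the passage from $|(X_1-X_1')-(X_2-X_2')|^2$ to $|X_1-X_2|^2$ is an elementary second-moment computation exploiting independence.
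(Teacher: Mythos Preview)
Your proof is correct and follows essentially the same approach as the paper's. The paper writes the argument in measure-theoretic language, introducing the joint law $\pi = Law(X_1,X_2)$ and integrating against the product measure $\pi\otimes\pi$, which is exactly your independent-copy construction; the symmetrization via the evenness of $K$ and the application of $\lambda_K$-convexity are identical, and your expansion of $\EE\bigl[|(X_1-X_1')-(X_2-X_2')|^2\bigr]$ makes explicit the final inequality that the paper leaves unjustified.
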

\begin{proof}
	We introduce the joint distribution of $X_1$ and $X_2$ as $\pi:=Law(X_1,X_2)$. Then 
	\begin{align}
	&\EE\left[-\viint{X_1-X_2}{\nabla K\ast\mu_1(X_1)-\nabla K\ast\mu_2(X_2)}\right]\notag\\
	=&- \iint_{D\times D}\viint{x_1-x_2}{\nabla K\ast\mu_1(x_1)-\nabla K\ast\mu_2(x_2)}\rd\pi(x_1,x_2) \notag\\
	=&-\frac{1}{2}\iint_{D\times D}\iint_{D\times D}\viint{x_1-x_2+y_2-y_1}{\nabla K(x_1-y_1)-\nabla K(x_2-y_2)}\rd\pi(y_1,y_2) \rd\pi(x_1,x_2) \notag\\
	\leq&-\frac{\lambda_K}{2}\iint_{D\times D}\iint_{D\times D}|x_1-y_1-x_2+y_2|^2\rd\pi(y_1,y_2) \rd\pi(x_1,x_2) \label{uselamcon}\\
	\leq &-\lambda_K^-\iint_{D\times D}|x_1-x_2|^2\rd\pi(x_1,x_2)\notag\\
	=&-\lambda_K^-\EE[|X_1-X_2|^2] ,\notag
	\end{align}
	where we have used the symmetry and the $\lambda_K$-convexity of $K$ given in Assumption \ref{asum1}. 
\end{proof}

Next we show the well-posedness of SDE \eqref{SDEnu}.
\begin{thm}\label{wellSDEnu} Let $K$, $V$ and $D$ satisfy Assumption \ref{asum1}. Assume that $Y_0$ has the distribution $\mu_0$ and $(B_t)_{t\geq 0}$ is a Brownian motion independent of $Y_0$. For any $T>0$, there exists only one strong solution $(Y_t^\nu,\tilde R_t^\nu)$ to the nonlinear SDE \eqref{SDEnu} on $[0,T]$ with $Law(Y_t^\nu)=\mu_t^\nu\in AC^2_{loc}([0,T];\mathcal{P}_2(D))$. Moreover, if $Y_t^{\nu,i}$, $i=1,2$ are two solutions to \eqref{SDEnu} with the initial data $Y_0^{i}$ respectively, then we have the following stability estimate
	\begin{equation}\label{SDEstab}
	\EE[|Y_t^{\nu,1}-Y_t^{\nu,2}|^2]\leq \left(1-2t(\lambda_K^-+\lambda_V^-)e^{-2t(\lambda_K^-+\lambda_V^-)} \right)\EE[|Y_0^{1}-Y_0^{2}|^2]\quad \mbox{for all }t\in[0,T],
	\end{equation}
	where $\lambda_K^-$  and $\lambda_V^-$ are  defined in \eqref{lambda}.
\end{thm}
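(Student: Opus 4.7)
The plan is to exploit Proposition \ref{pdetosde}, which gives a unique regularized solution $(Y^{\nu,\varepsilon}_t, \tilde R^{\nu,\varepsilon}_t)$ whose law is the weak solution $\mu^{\nu,\varepsilon}_t$ of \eqref{rePDEnu}. I will first show that $\{Y^{\nu,\varepsilon}\}_{\varepsilon > 0}$ is Cauchy in $L^2(\Omega; C([0,T]; \RR^d))$, then identify the limit as a strong solution of \eqref{SDEnu}. To this end, couple two regularized copies $Y^{\nu,\varepsilon_1}_t$ and $Y^{\nu,\varepsilon_2}_t$ driven by the same Brownian motion $B_t$ from the same initial datum $Y_0$, and apply It\^o's formula to $|Y^{\nu,\varepsilon_1}_t - Y^{\nu,\varepsilon_2}_t|^2$. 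The stochastic integrals cancel, and the reflection contribution
\begin{equation*}
-2\int_0^t \la Y^{\nu,\varepsilon_1}_s - Y^{\nu,\varepsilon_2}_s, \rd \tilde R^{\nu,\varepsilon_1}_s - \rd \tilde R^{\nu,\varepsilon_2}_s\ra
\end{equation*}
is nonpositive by convexity of $D$ (the supporting hyperplane property gives $\la x - y, n(x)\ra \geq 0$ for $x \in \partial D$, $y \in D$). For the interaction term, I would insert and subtract $\nabla K \ast \mu^{\nu,\varepsilon_i}_s$: the pieces $\nabla K_{\varepsilon_i} \ast \mu^{\nu,\varepsilon_i}_s - \nabla K \ast \mu^{\nu,\varepsilon_i}_s$ are $O(\omega_K(\varepsilon_i))$ uniformly in $D$, where $\omega_K$ is the modulus of continuity of $\nabla K$ on the compact set $D - D$, while the remaining crossed expression is exactly the one controlled by Lemma \ref{lmconvex}, producing a contribution $\leq -2\lambda_K^- \EE[|Y^{\nu,\varepsilon_1}_s - Y^{\nu,\varepsilon_2}_s|^2]$ after taking expectations. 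An analogous decomposition combined with the $\lambda_V$-convexity of $V$ handles the external potential. Grönwall's inequality delivers the Cauchy property; the limit $(Y^\nu_t, \tilde R^\nu_t)$ is then identified as a solution of \eqref{SDEnu} by passing to the limit in the integral formulation, using the continuity of $\nabla K$ on $D - D$ and the $\mathcal W_2$-convergence of $\mu^{\nu,\varepsilon}_s$ to $\mu^\nu_s := \mathrm{Law}(Y^\nu_s)$.

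\textbf{Stability and uniqueness.} For \eqref{SDEstab}, let $Y^{\nu,1}_t, Y^{\nu,2}_t$ be two solutions driven by the same Brownian motion from initial data $Y_0^1, Y_0^2$. Applying It\^o to $|Y^{\nu,1}_t - Y^{\nu,2}_t|^2$, the martingale part cancels and the reflection terms are again nonpositive by convexity of $D$. Taking expectations, Lemma \ref{lmconvex} bounds the interaction contribution by $-2\lambda_K^- \EE[|Y^{\nu,1}_s - Y^{\nu,2}_s|^2]$, and the $\lambda_V$-convexity of $V$ gives $-2\lambda_V^- \EE[|Y^{\nu,1}_s - Y^{\nu,2}_s|^2]$ for the external force, yielding
\begin{equation*}
\frac{d}{dt} \EE[|Y^{\nu,1}_t - Y^{\nu,2}_t|^2] \leq -2(\lambda_K^- + \lambda_V^-) \, \EE[|Y^{\nu,1}_t - Y^{\nu,2}_t|^2].
\end{equation*}
A Grönwall argument (and a routine algebraic rearrangement of the exponential) then produces \eqref{SDEstab}, and setting $Y_0^1 = Y_0^2$ gives uniqueness.

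\textbf{Main obstacle.} The subtle point is the limit passage in the reflecting process: one must produce a uniform bound on the total variation $|\tilde R^{\nu,\varepsilon}|_T$ (typically obtained by applying It\^o's formula to a smooth extension of the signed distance to $\partial D$ together with the uniform bound on the drifts $\nabla K_\varepsilon \ast \mu^{\nu,\varepsilon}$ and $\nabla V_\varepsilon$) and then invoke continuity of the Skorokhod map on convex domains \cite{tanaka1979stochastic} to verify that the limit satisfies both the vectorial representation $\tilde R^\nu_t = \int_0^t n(Y^\nu_s) \rd |\tilde R^\nu|_s$ and the support identity $|\tilde R^\nu|_t = \int_0^t \mathbf 1_{\partial D}(Y^\nu_s) \rd |\tilde R^\nu|_s$. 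A secondary difficulty is to check that $\mu^\nu_t = \mathrm{Law}(Y^\nu_t)$ is indeed a weak solution of \eqref{PDEnu} in the sense of Definition \ref{weaksolutionpde} (and in particular lies in $AC^2_{\mathrm{loc}}([0,T]; \mathcal P_2(D))$), which requires a limit argument parallel to the It\^o computation in the proof of Proposition \ref{pdetosde} and, in particular, control of $\nabla K_\varepsilon \ast \mu^{\nu,\varepsilon}_t \to \nabla K \ast \mu^\nu_t$ strong enough to pass to the limit inside the weak formulation.
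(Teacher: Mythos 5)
Your proposal follows essentially the same route as the paper: a Cauchy estimate for the regularized processes via It\^o's formula, discarding the reflection terms by convexity of $D$, splitting the interaction term to isolate the piece controlled by Lemma \ref{lmconvex} plus regularization errors, and closing with Gr\"onwall, with the stability estimate obtained by the same computation. The only cosmetic difference is in recovering the reflecting process: the paper simply defines $\tilde R^\nu_t$ as the residual of the limiting equation, proves a.s.\ convergence $\tilde R^{\nu,\varepsilon}_t \to \tilde R^\nu_t$ from the drift convergence, and cites \cite{choi2016propagation} for verifying the representation and support properties, rather than invoking a total-variation bound and continuity of the Skorokhod map.
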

\begin{proof} 
	$\bullet$\textit{Step 1 (Cauchy estimate):} 	Let $Y^{\nu,\Eps}_t$ and $Y^{\nu,\Eps'}_t$ be the solutions to the regularized nonlinear SDE \eqref{RSDEnu}.
	Applying It\^{o}'s formula to $Y^\Eps_t - Y^{\Eps'}_t$, we have
	\begin{align*} 
	|Y_t^{\nu,\varepsilon}-Y_t^{\nu,\varepsilon'}|^2
	&=-2\int_0^t\la Y_s^{\nu,\varepsilon}-Y_s^{\nu,\varepsilon'},\nabla K_\varepsilon\ast\mu_s^{\nu,\varepsilon}(Y_s^{\nu,\varepsilon})-\nabla K_{\varepsilon'}\ast\mu_s^{\nu,\varepsilon'}(Y_s^{\nu,\varepsilon'})\ra \ds  \nn
	\\ 
	&\quad -2\int_0^t\la Y_s^{\nu,\varepsilon}-Y_s^{\nu,\varepsilon'},\nabla V_\varepsilon(Y_s^{\nu,\varepsilon})-\nabla V_{\varepsilon'}(Y_s^{\nu,\varepsilon'})\ra \ds  \nn\\
	&\quad 
	-2\int_0^t\la Y_s^{\nu,\varepsilon}-Y_s^{\nu,\varepsilon'},n(Y_s^{\nu,\varepsilon})\ra {\rm d}|\tilde R^{\nu,\varepsilon}|_s-2\int_0^t
	\la Y_s^{\nu,\varepsilon'}-Y_s^{\nu,\varepsilon},n(Y_s^{\nu,\varepsilon'})\ra {\rm d} |\tilde R^{\nu,\varepsilon'}|_s \,.
	\end{align*}
	By the convexity of the domain $D$, one has
	\begin{align}\label{conxdomain}
	(x-y)\cdot n(x)\geq 0\quad \mbox{for any } x\in\partial D\mbox{ and } y\in D.
	\end{align}
	Consequently, 
	\begin{align*}
	\la Y_s^{\nu,\varepsilon}-Y_s^{\nu,\varepsilon'},n(Y_s^{\nu,\varepsilon})\ra 
	\geq 0,
	\quad 
	{\rm d}|\tilde R^{\nu,\varepsilon}|_s \mbox{ almost surely}\,
	\end{align*}
	and
	\begin{align*}
	\la Y_s^{\nu,\varepsilon'}-Y_s^{\nu,\varepsilon},n(Y_s^{\nu,\varepsilon'})\ra  
	\geq 0,
	\quad 
	{\rm d}|\tilde R^{\nu,\varepsilon'}|_s \mbox{ almost surely} \,.
	\end{align*}
	Therefore, one has
	\begin{align*} 
	|Y_t^{\nu,\varepsilon}-Y_t^{\nu,\varepsilon'}|^2
	&\leq-2\int_0^t\la Y_s^{\nu,\varepsilon}-Y_s^{\nu,\varepsilon'},\nabla K_\varepsilon\ast\mu_s^{\nu,\varepsilon}(Y_s^{\nu,\varepsilon})-\nabla K_{\varepsilon'}\ast\mu_s^{\nu,\varepsilon'}(Y_s^{\nu,\varepsilon'})\ra \ds  \nn
	\\ 
	&\quad -2\int_0^t\la Y_s^{\nu,\varepsilon}-Y_s^{\nu,\varepsilon'},\nabla V_\varepsilon(Y_s^{\nu,\varepsilon})-\nabla V_{\varepsilon'}(Y_s^{\nu,\varepsilon'})\ra \ds 	\\
	&=: I_1+I_2.
	\end{align*}
The first term $I_1$ satisfies
	\begin{align*} 
	I_1&=-2\int_0^t\langle Y_s^{\nu,\varepsilon}-Y_s^{\nu,\varepsilon'},\nabla K\ast\mu_s^{\nu,\varepsilon}(Y_s^{\nu,\varepsilon})-\nabla K\ast\mu_s^{\nu,\varepsilon'}(Y_s^{\nu,\varepsilon'})\rangle \ds \\
	&\quad -	2\int_0^t\langle Y_s^{\nu,\varepsilon}-Y_s^{\nu,\varepsilon'},\nabla K_\varepsilon\ast\mu_s^{\nu,\varepsilon}(Y_s^{\nu,\varepsilon})-\nabla K\ast\mu_s^{\nu,\varepsilon}(Y_s^{\nu,\varepsilon})\rangle \ds \\
	&\quad -	2\int_0^t\langle Y_s^{\nu,\varepsilon}-Y_s^{\nu,\varepsilon'},\nabla K\ast\mu_s^{\nu,\varepsilon'}(Y_s^{\nu,\varepsilon'})-\nabla K_{\varepsilon'}\ast\mu_s^{\nu,\varepsilon'}(Y_s^{\nu,\varepsilon'})\rangle \ds \\
	&\leq -2\int_0^t\langle Y_s^{\nu,\varepsilon}-Y_s^{\nu,\varepsilon'},\nabla K\ast\mu_s^{\nu,\varepsilon}(Y_s^{\nu,\varepsilon})-\nabla K\ast\mu_s^{\nu,\varepsilon'}(Y_s^{\nu,\varepsilon'})\rangle \ds \\
	&\quad+C t \sup\limits_{s\in[0,t]}\norm{\nabla K_\varepsilon\ast\mu_s^{\nu,\varepsilon}-\nabla K\ast\mu_s^{\nu,\varepsilon}}_\infty+C t\sup\limits_{s\in[0,t]}\|\nabla K_{\varepsilon'}\ast\mu_s^{\nu,\varepsilon'}-\nabla K\ast\mu_s^{\nu,\varepsilon'}\|_\infty,
	\end{align*}
	where $C$ only depends on $D$. 
	For $I_2$, we have
	\begin{align*}
	I_2&=-2\int_0^t\langle Y_s^{\nu,\varepsilon}-Y_s^{\nu,\varepsilon'},\nabla V(Y_s^{\nu,\varepsilon})-\nabla V(Y_s^{\nu,\varepsilon'})\rangle \ds -	2\int_0^t\langle Y_s^{\nu,\varepsilon}-Y_s^{\nu,\varepsilon'},\nabla V_\varepsilon(Y_s^{\nu,\varepsilon})-\nabla V(Y_s^{\nu,\varepsilon})\rangle \ds \\
	&\quad -	2\int_0^t\langle Y_s^{\nu,\varepsilon}-Y_s^{\nu,\varepsilon'},\nabla V(Y_s^{\nu,\varepsilon'})-\nabla V_{\varepsilon'}(Y_s^{\nu,\varepsilon'})\rangle \ds \\
	&\leq -2\lambda_V\int_0^t |Y_s^{\nu,\varepsilon}-Y_s^{\nu,\varepsilon'}|^2\ds +C t \norm{\nabla V_\varepsilon-\nabla V}_\infty+C t \norm{\nabla V_{\varepsilon'}-\nabla V}_\infty\,,
	\end{align*}
	where $C$ only depends on $D$. Here we also used the $\lambda_V$-convexity of $V$. By the continuity of $\nabla K$  and $\nabla V$, we have for all $s \in [0, t]$,
	\begin{equation*}
	\norm{\nabla K_\varepsilon\ast\mu_s^{\nu,\varepsilon}-\nabla K\ast\mu_s^{\nu,\varepsilon}}_\infty\leq \norm{\nabla K_\varepsilon-\nabla K}_\infty\norm{\mu_s^{\nu,\varepsilon}}_1\rightarrow 0,\quad \mbox{as }\varepsilon\rightarrow 0\,,
	\end{equation*}	
	and
	\begin{equation}\label{Vlimit}
	\norm{\nabla V_\varepsilon-\nabla V}_\infty\rightarrow 0,\quad \mbox{as }\varepsilon\rightarrow 0\,,
	\end{equation}
Hence,
	\begin{align}\label{cau}
	|Y_t^{\nu,\varepsilon}-Y_t^{\nu,\varepsilon'}|^2&\leq  -2\int_0^t\langle Y_s^{\nu,\varepsilon}-Y_s^{\nu,\varepsilon'},\nabla K\ast\mu_s^{\nu,\varepsilon}(Y_s^{\nu,\varepsilon})-\nabla K\ast\mu_s^{\nu,\varepsilon'}(Y_s^{\nu,\varepsilon'})\rangle \ds \notag\\
	&\quad -2\lambda_V \int_0^t|Y_s^{\nu,\varepsilon}-Y_s^{\nu,\varepsilon'}|^2\ds
	+C(\varepsilon,\varepsilon')t\,,
	\end{align}
	where $C(\varepsilon,\varepsilon')$ depends on $D$, $\varepsilon$ and $\varepsilon'$, and
	\begin{equation*}
	C(\varepsilon,\varepsilon')\rightarrow0\,,\quad \mbox{as }\varepsilon,\varepsilon'\rightarrow 0\,.
	\end{equation*}
	Taking expectation on \eqref{cau} and applying Lemma \ref{lmconvex}, we have
	\begin{equation}
	\EE\left[	|Y_t^{\nu,\varepsilon}-Y_t^{\nu,\varepsilon'}|^2\right]\leq (-2\lambda_K^--2\lambda_V^-)\int_0^t\EE\left[|Y_s^{\nu,\varepsilon}-Y_s^{\nu,\varepsilon'}|^2\right]ds+C(\varepsilon,\varepsilon')t\,.
	\end{equation}
	Applying Gronwall's inequality, one concludes that
	\begin{equation}
	\sup\limits_{t\in[0,T]}\EE\left[	|Y_t^{\nu,\varepsilon}-Y_t^{\nu,\varepsilon'}|^2\right] \leq C(\varepsilon,\varepsilon')T(1-2(\lambda_K^-+\lambda_V^-)T)e^{-2(\lambda_K^-+\lambda_V^-)T}\rightarrow0\,,
	\quad
	\mbox{ as }\varepsilon,\varepsilon'\rightarrow0 \,.
	\end{equation}
	
	$\bullet$\textit{Step 2 (Passing to the limit):} 	It follows from \textit{Step 1} that there exists a limiting stochastic process $Y^\nu_t \in C([0,T],D)$ such that \begin{align}\label{caucy1}
	\sup\limits_{t\in[0,T]}\EE\left[|Y_t^{\nu,\varepsilon}-Y_t^{\nu}|^2\right] \rightarrow0
	\quad
	\mbox{ as }\varepsilon\rightarrow0 \,.
	\end{align}
	Let $\mu_t^\nu$ be the distribution of $Y_t^\nu$ . Then
	\begin{align}\label{limitmu}
	\sup\limits_{t\in[0,T]} \mathcal{W}_2^2(\mu_t^{\nu,\varepsilon},\mu_t^\nu)\leq\sup\limits_{t\in[0,T]}\EE\left[|Y_t^{\nu,\varepsilon}-Y_t^{\nu}|^2\right] \rightarrow 0 \quad
	\mbox{ as }\varepsilon\rightarrow0\,.
	\end{align}
    This implies $\mu_t^\nu\in AC^2_{loc}([0,T];\mathcal{P}_2(D))$.
	
	Next we show that there exists a reflective process $\widetilde R_t^\nu$ such that $(Y_t^\nu, \widetilde R_t^\nu)$ is a strong solution to the SDE~\eqref{SDEnu}. 
	Define the process $\tilde R_t^\nu$ as
	\begin{equation}
	\tilde R_t^\nu
	:= - \vpran{Y_t^\nu-Y_0+\int_0^t \nabla K\ast\mu_s^\nu(Y_s^\nu)+\nabla V(Y_s^\nu)\ds-\sqrt{2\nu} B_t}\, ,
	\end{equation}
	and recall that 
	\begin{equation}
	\tilde R_t^{\nu,\varepsilon}
	= - \vpran{Y_t^{\nu,\varepsilon}-Y_0+\int_0^t \nabla K_\varepsilon\ast\mu_s^{\nu,\varepsilon}(Y_s^{\nu,\varepsilon})+\nabla V_\varepsilon(Y_s^{\nu,\varepsilon})\ds-\sqrt{2\nu} B_t}\, .
	\end{equation}
We want to show that
	\begin{equation}\label{estR}
	\tilde R_t^{\nu,\varepsilon}\stackrel{a.s.}{\longrightarrow} \tilde R_t^\nu\quad \mbox{as } \varepsilon\rightarrow 0.
	\end{equation}
	To this end, one first makes the splitting such that
	\begin{align*}
	|\nabla V_\varepsilon(Y_s^{\nu,\varepsilon})-\nabla V(Y_s^\nu)|\leq 	|\nabla V_\varepsilon(Y_s^{\nu,\varepsilon})-\nabla V(Y_s^{\nu,\varepsilon})|+|\nabla V(Y_s^{\nu,\varepsilon})-\nabla V(Y_s^\nu)|=:I_3+I_4.
	\end{align*}
	Then $I_3\rightarrow 0$ by \eqref{Vlimit}. It follows from \eqref{caucy1} that there exists a subsequence (independent of $s$) of $Y_s^{\nu,\varepsilon}$ (without relabeling $\varepsilon$) such that
	\begin{equation}
	Y_s^{\nu,\varepsilon}\stackrel{a.s.}{\longrightarrow} Y_s^\nu\quad \mbox{as } \varepsilon\rightarrow 0\,.
	\end{equation}
This leads to $I_4\stackrel{a.s.}{\longrightarrow}0$ since $\nabla V$ is continuous. Hence, we have
	\begin{equation}\label{estV}
	\nabla V_\varepsilon(Y_s^{\nu,\varepsilon})\stackrel{a.s.}{\longrightarrow}\nabla V(Y_s^\nu)\quad \mbox{as } \varepsilon\rightarrow 0.
	\end{equation}
	Similarly we have the bound
	\begin{align*}
	&|\nabla K_\varepsilon\ast\mu_s^{\nu,\varepsilon}(Y_s^{\nu,\varepsilon})-\nabla K\ast\mu_s^\nu(Y_s^\nu)|\\
	\leq &|\nabla K_\varepsilon\ast\mu_s^{\nu,\varepsilon}(Y_s^{\nu,\varepsilon})-\nabla K\ast\mu_s^{\nu,\varepsilon}(Y_s^{\nu,\varepsilon})|+ |\nabla K\ast\mu_s^{\nu,\varepsilon}(Y_s^{\nu,\varepsilon})-\nabla K\ast\mu_s^{\nu,\varepsilon}(Y_s^\nu)|\\
	&+|\nabla K\ast\mu_s^{\nu,\varepsilon}(Y_s^{\nu})-\nabla K\ast\mu_s^\nu(Y_s^{\nu})|=:J_1+J_2+J_3.
	\end{align*}
	Then $J_1\rightarrow 0$ uniformly and $J_2\stackrel{a.s.}{\longrightarrow}0$ by the similar argument for $I_3$ and $I_4$. Moreover $J_3\rightarrow 0$ by \eqref{starconv} and \eqref{limitmu}. Therefore, we have
	\begin{equation}\label{estK}
	\nabla K_\varepsilon\ast\mu_s^{\nu,\varepsilon}(Y_s^{\nu,\varepsilon})\stackrel{a.s.}{\longrightarrow}\nabla K\ast\mu_s^\nu(Y_s^\nu)\quad \mbox{as } \varepsilon\rightarrow 0.
	\end{equation}
Hence \eqref{estR} follows from \eqref{estK} and \eqref{estV}. We are left to check the properties of $\tilde R_t^\nu$:
	\begin{equation}
	\tilde R_t^\nu=\int_0^tn(Y^{\nu}_s) \, {\rm d}|\tilde R^\nu|_s\,,\quad |\tilde R^\nu|_t=\int_0^t\textbf{1}_{\partial D}(Y^{\nu}_s) \, {\rm d}|\tilde R^\nu|_s\,.
	\end{equation}
	These can be verified in the same way as in \cite[Step \textbf{B} on page 13]{choi2016propagation}. The details are omitted here. 
	
	$\bullet$\textit{Step 3 (Stability estimate):} Let $Y_t^{\nu,i}$, $i=1,2$ be two processes obtained as about with initial data $Y_0^{i}$ respectively and $Law(Y_t^{\nu,i})=\mu_t^{\nu,i}$. Similar argument as \textit{Step 1} yields that
	\begin{equation}
	\EE\left[	|Y_t^{\nu,1}-Y_t^{\nu,2}|^2\right]\leq  -2(\lambda_K^-+\lambda_V^-)\int_0^t\EE\left[|Y_s^{\nu,1}-Y_s^{\nu,1}|^2\right]\ds+\EE\left[	|Y_0^{1}-Y_0^{2}|^2\right],
	\end{equation}
	which implies the stability estimate \eqref{SDEstab} by Gronwall's inequality.
\end{proof}

As a direct result from Theorem \ref{wellSDEnu}, one has the well-posedness for the PDE \eqref{PDEnu}.
\begin{thm}\label{wellPDEnu}
	Let $K$, $V$ and $D$ satisfy Assumption \ref{asum1}. For any $T>0$, there exists an unique curve $\mu_t^\nu\in AC^2_{loc}([0,T];\mathcal{P}_2(D))$ that solves equation \eqref{PDEnu} weakly. Moreover, if $\mu_t^{\nu,i}$, $(i=1,2)$ are two solutions to \eqref{PDEnu}  with the initial data $\mu_0^{i}$ respectively, then the following Dobrushin's type  stability estimate holds
	\begin{equation}\label{PDEstab}
	\mathcal{W}_2^2(\mu_t^{\nu,1},\mu_t^{\nu,2})\leq \left(1-2t(\lambda_K^-+\lambda_V^-)e^{-2t(\lambda_K^-+\lambda_V^-)} \right)\mathcal{W}_2^2(\mu_0^{\nu,1},\mu_0^{\nu,2})\quad \quad \mbox{for all }t\in[0,T]\,,
	\end{equation}
	where $\lambda_K^-$  and $\lambda_V^-$ are  defined in \eqref{lambda}.
\end{thm}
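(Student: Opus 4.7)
The plan is to obtain Theorem \ref{wellPDEnu} as a direct corollary of Theorem \ref{wellSDEnu} by identifying $\mu_t^\nu$ with $Law(Y_t^\nu)$. First, for existence, set $\mu_t^\nu := Law(Y_t^\nu)$ where $Y_t^\nu$ is the unique strong solution of \eqref{SDEnu}. The $AC^2_{loc}$-regularity of $\mu_t^\nu$ follows from \eqref{limitmu}, since $\mu_t^{\nu,\varepsilon} \to \mu_t^\nu$ uniformly in $t$ in the $\mathcal{W}_2$ metric and each $\mu_t^{\nu,\varepsilon}$ already lies in $AC^2_{loc}$. To verify that $\mu_t^\nu$ solves \eqref{PDEnu} in the sense of Definition \ref{weaksolutionpde}, I would apply It\^o's formula to $\phi(t, Y_t^{\nu,\varepsilon})$ for any admissible test function $\phi \in C_c^\infty(D\times(0,T))$ with $\langle \nabla\phi, n\rangle=0$ on $\partial D$ and take expectation as in Proposition \ref{pdetosde}, then pass to the limit $\varepsilon \to 0$ using the uniform convergences $\mathcal{W}_2(\mu_t^{\nu,\varepsilon},\mu_t^\nu)\to 0$, $\|\nabla K_\varepsilon - \nabla K\|_\infty \to 0$ and $\|\nabla V_\varepsilon - \nabla V\|_\infty \to 0$ that were established in the proof of Theorem \ref{wellSDEnu}.

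For the Dobrushin-type stability estimate \eqref{PDEstab}, I would use a synchronous coupling. Given $\mu_0^1, \mu_0^2 \in \mathcal{P}_2(D)$, pick a $\mathcal{W}_2$-optimal coupling $(Y_0^1, Y_0^2)$ realizing $\mathbb{E}[|Y_0^1 - Y_0^2|^2] = \mathcal{W}_2^2(\mu_0^1, \mu_0^2)$, and solve \eqref{SDEnu} with these two initial data driven by the \emph{same} Brownian motion $(B_t)$. By the existence step, $\mu_t^{\nu,i} = Law(Y_t^{\nu,i})$ is a weak PDE solution with initial datum $\mu_0^i$. Combining the elementary bound $\mathcal{W}_2^2(\mu_t^{\nu,1},\mu_t^{\nu,2}) \leq \mathbb{E}[|Y_t^{\nu,1} - Y_t^{\nu,2}|^2]$ with the SDE stability estimate \eqref{SDEstab} then yields \eqref{PDEstab} directly.

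Uniqueness of the weak PDE solution is the genuinely delicate point, because the coupling above only compares solutions already realized as laws of SDEs. One option is to invoke uniqueness from the gradient-flow result \cite[Theorem 11.2.8]{ambrosio2008gradient} already cited in the introduction. A self-contained alternative is a ``freeze-the-drift'' argument: for any weak solution $\mu_t^\nu$ to \eqref{PDEnu}, the map $b(t,x) := -\nabla K \ast \mu_t^\nu(x) - \nabla V(x)$ is continuous and bounded on $[0,T]\times D$ by Assumption \ref{asum1}. The associated \emph{linear} reflecting SDE with initial law $\mu_0$ has a unique strong solution $Z_t^\nu$ by \cite{tanaka1979stochastic}, and by It\^o's formula both $Law(Z_t^\nu)$ and $\mu_t^\nu$ satisfy the same linear Fokker--Planck equation with this frozen drift (the analog of \eqref{linearrePDEnu}). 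Uniqueness of that linear PDE --- which I would verify by repeating Step 1 in the proof of Theorem \ref{wellSDEnu} on two solutions of the same linear SDE, where the nonlocal interaction term of Lemma \ref{lmconvex} is absent and the estimate becomes immediate --- forces $Law(Z_t^\nu) = \mu_t^\nu$. Hence $Z_t^\nu$ solves the nonlinear SDE \eqref{SDEnu} and Theorem \ref{wellSDEnu} identifies it with $Y_t^\nu$, so $\mu_t^\nu$ is uniquely determined.

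The main obstacle is precisely this PDE-to-SDE superposition step in the uniqueness argument: one must ensure that every weak measure-valued solution of \eqref{PDEnu} is realized as the law of a reflected SDE, which in turn requires uniqueness of the linearized Fokker--Planck problem on $D$ with reflecting boundary conditions. Once that ingredient is in place, existence, time regularity, and the Wasserstein stability bound are all immediate consequences of the SDE well-posedness and the synchronous coupling.
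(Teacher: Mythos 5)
Your proposal is correct and follows essentially the same route as the paper: existence by identifying $\mu_t^\nu$ with $Law(Y_t^\nu)$ and applying It\^o's formula, and the Dobrushin estimate by synchronous coupling together with the SDE stability bound \eqref{SDEstab} and $\mathcal{W}_2^2(\mu_t^{\nu,1},\mu_t^{\nu,2})\leq \EE[|Y_t^{\nu,1}-Y_t^{\nu,2}|^2]$. Your explicit treatment of uniqueness via the freeze-the-drift superposition argument is in fact more careful than the paper, which leaves this step implicit (effectively delegating uniqueness of arbitrary weak solutions to the gradient-flow theory of \cite[Theorem 11.2.8]{ambrosio2008gradient} cited earlier), so this is a welcome addition rather than a deviation.
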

\begin{proof}
	Assume that $(Y_t^\nu,\tilde R_t^\nu)$ is the unique solution to SDE \eqref{SDEnu} obtained in Theorem \ref{wellSDEnu}. Let $\mu_t^\nu\in AC^2_{loc}([0,T];\mathcal{P}_2(D))$ be the distribution of $Y_t^\nu$, then a direct application of  It\^{o}'s formula implies that $\mu_t^\nu$ is a weak solution to PDE \eqref{PDEnu}. The Dobrushin's type  stability \eqref{PDEstab} follows from the stability of $Y_t^\nu,$ in \eqref{SDEstab} and the fact that
	\begin{equation*}
	\mathcal{W}_2^2(\mu_t^{\nu,1},\mu_t^{\nu,2})\leq \EE\left[	|Y_t^{\nu,1}-Y_t^{\nu,2}|^2\right],
	\end{equation*}
	where $Y_t^{\nu,i}$, $i=1,2$ are two solutions to \eqref{SDEnu} with the initial data $Y_0^{i}$ respectively. This type of coupling argument has been used in \cite[Theorem 1.2]{choi2016propagation} and \cite[Theorem 2.6]{fetecau2018propagation}.
\end{proof}

Next theorem gives the well-posedness of SDE \eqref{SDE}.
\begin{thm}\label{wellSDE} Let $K$, $V$ and $D$ satisfy Assumption \ref{asum1}. Assume that $Y_0$ has the distribution $\mu_0$ and $(B_t)_{t\geq 0}$ is a Brownian motion independent of $Y_0$.
	For any $T>0$, there exists only one strong solution $Y_t$ to the nonlinear SDE \eqref{SDE} on $[0,T]$. Moreover, if $Y_t^{i}$, $(i=1,2)$ are two solutions to \eqref{SDE} with the initial data $Y_0^{i}$ respectively, then we have the following stability estimate
	\begin{equation}\label{SDEstab1}
	\EE[|Y_t^{1}-Y_t^{2}|^2]\leq \left(1-2t(\lambda_K^-+\lambda_V^-)e^{-2t(\lambda_K^-+\lambda_V^-)} \right)\EE[|Y_0^{1}-Y_0^{2}|^2]\quad \mbox{for all }t\in[0,T]\,,
	\end{equation}
	where $\lambda_K^-$  and $\lambda_V^-$ are  defined in \eqref{lambda}.
\end{thm}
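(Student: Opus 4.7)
The plan is to mirror the proof of Theorem \ref{wellSDEnu}, with the $\sqrt{2\nu}\,B_t$ term absent throughout. The Brownian motion $(B_t)_{t\ge 0}$ does not actually appear in \eqref{SDE}; it is retained in the hypothesis only to keep a common probability space with \eqref{SDEnu}, and, as the paper itself observes just before \eqref{SDE}, for each fixed $\omega\in\Omega$ the equation is a projected (reflected) ODE driven solely by the random initial datum $Y_0(\omega)$. So the whole argument is pathwise once the mean-field law $\mu_t = Law(Y_t)$ is in play.

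First I would regularize via $K_\varepsilon = J_\varepsilon\ast K$ and $V_\varepsilon = J_\varepsilon\ast V$ as in \eqref{def:F-Eps}, and solve the regularized projected ODE
\begin{equation*}
Y_t^{\varepsilon}
= Y_0 + \int_0^t P_{Y_s^{\varepsilon}}\!\bigl(-\nabla K_\varepsilon\ast\mu_s^{\varepsilon}(Y_s^{\varepsilon}) - \nabla V_\varepsilon(Y_s^{\varepsilon})\bigr)\ds,
\qquad \mu_s^{\varepsilon}=Law(Y_s^{\varepsilon}).
\end{equation*}
Since $\nabla K_\varepsilon$ and $\nabla V_\varepsilon$ are smooth and bounded on $D$, existence and uniqueness of a solution with $\mu_t^{\varepsilon}\in AC^2_{loc}([0,T];\mathcal{P}_2(D))$ follow from \cite[Lemma 2.4]{carrillo2014nonlocal}, which uses differential-inclusion theory to handle the discontinuous projection $P_x$.

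The core step is a Cauchy estimate in $\varepsilon$. Writing $b_\varepsilon(x) = -\nabla K_\varepsilon\ast\mu_t^{\varepsilon}(x) - \nabla V_\varepsilon(x)$ and using $v-P_x(v) = \max\{0,\langle v,n(x)\rangle\}\mathbf{1}_{x\in\partial D}\,n(x)$, one decomposes
\begin{equation*}
\bigl\langle Y_t^{\varepsilon}-Y_t^{\varepsilon'},\ P_{Y_t^{\varepsilon}}b_\varepsilon(Y_t^{\varepsilon}) - P_{Y_t^{\varepsilon'}}b_{\varepsilon'}(Y_t^{\varepsilon'})\bigr\rangle
\end{equation*}
into an unprojected drift piece plus two boundary correction terms of the form $-c\,\langle Y_t^{\varepsilon}-Y_t^{\varepsilon'},n(Y_t^{\varepsilon})\rangle$ with $c\ge 0$ (and a symmetric counterpart at $Y_t^{\varepsilon'}$). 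By convexity of $D$ (inequality \eqref{conxdomain}), each boundary piece is $\le 0$, playing exactly the role of the reflecting process $\tilde R_t^\nu$ in the proof of Theorem \ref{wellSDEnu}. For the drift, taking expectation and invoking Lemma \ref{lmconvex} for the convolution together with the $\lambda_V$-convexity of $V$, while using the uniform limits $\|\nabla K_\varepsilon-\nabla K\|_\infty, \|\nabla V_\varepsilon-\nabla V\|_\infty\to 0$, yields
\begin{equation*}
\EE\bigl[|Y_t^{\varepsilon}-Y_t^{\varepsilon'}|^2\bigr]
\le -2(\lambda_K^-+\lambda_V^-)\int_0^t \EE\bigl[|Y_s^{\varepsilon}-Y_s^{\varepsilon'}|^2\bigr]\ds + C(\varepsilon,\varepsilon')\,t
\end{equation*}
with $C(\varepsilon,\varepsilon')\to 0$. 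Gronwall then produces the Cauchy property, and passing to the limit gives $Y_t\in C([0,T];D)$ with law $\mu_t\in AC^2_{loc}([0,T];\mathcal{P}_2(D))$.

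Finally I would verify that $Y_t$ satisfies \eqref{SDE}: the drift terms converge almost surely by continuity of $\nabla K,\nabla V$ and the $\mathcal{W}_2$-convergence $\mu_t^{\varepsilon}\to\mu_t$ (following the same $J_1,J_2,J_3$ splitting used in \textit{Step 2} of the proof of Theorem \ref{wellSDEnu}), while the projection operator is identified in the limit through the differential-inclusion characterization. The stability estimate \eqref{SDEstab1} follows by applying the same coupling to two solutions with different initial data $Y_0^1,Y_0^2$ and invoking Gronwall. I expect the main obstacle to be the discontinuity of $P_x$ on $\partial D$: unlike in the diffusive case where the bounded-variation reflecting process $\tilde R_t^\nu$ supplies a clean It\^o calculus, here the projection correction must be controlled pointwise, and the convexity of $D$ is essentially the only tool that makes the boundary term dissipative. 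A secondary delicate point is to confirm that the \emph{projected} velocity passes correctly to the limit, rather than merely its unprojected drift part, which amounts to showing that the (measure-valued) boundary reaction terms $b_\varepsilon-P\, b_\varepsilon$ are compact in a sufficiently strong sense.
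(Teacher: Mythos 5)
Your stability estimate is essentially the paper's own argument: split the projected drift difference into the unprojected part (controlled by Lemma \ref{lmconvex} and the $\lambda_V$-convexity of $V$) plus projection corrections that are nonpositive by the convexity inequality \eqref{conxdomain}, then apply Gronwall. The genuine gap is in your existence construction, at its very first step. You pose the \emph{self-consistent} regularized problem, in which the drift contains $\mu_s^{\varepsilon}=Law(Y_s^{\varepsilon})$, and claim its well-posedness follows from \cite[Lemma 2.4]{carrillo2014nonlocal}. That lemma solves the projected ODE $\dot x = P_x(F(x))$ only for a \emph{prescribed} continuous, bounded velocity field $F$; it says nothing about the mean-field problem in which the law of the unknown enters the drift, and that nonlinearity is precisely the hard part. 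To close this you would need either a fixed-point argument over curves of measures (not outlined), or to first obtain $\mu_t^{\varepsilon}$ as the weak solution of the regularized aggregation PDE via \cite[Theorems 1.5, 1.6]{carrillo2014nonlocal}, freeze it as a drift, and then identify $Law(Y_t^{\varepsilon})=\mu_t^{\varepsilon}$ through uniqueness of the associated linear equation, exactly as in Proposition \ref{pdetosde}. But that machinery applies verbatim to $K$ and $V$ themselves: they already satisfy Assumption \ref{asum1}, and $\nabla K\ast\mu_t+\nabla V$ is already continuous and bounded on the compact set $D$. So once the fix is in place, the entire $\varepsilon$-regularization and Cauchy-sequence apparatus is superfluous.

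This is in fact what the paper does, which is why its proof is short: existence of $\mu_t$ comes from the known well-posedness of \eqref{PDE} in \cite[Theorems 1.5, 1.6]{carrillo2014nonlocal}; then \cite[Lemma 2.4]{carrillo2014nonlocal} is applied with the \emph{fixed} drift $F(x)=-\nabla K\ast\mu_t(x)-\nabla V(x)$; and $Law(Y_t)=\mu_t$ is recovered by a Proposition \ref{pdetosde}-type argument. No mollification and no limit passage are needed. Your route also leaves unresolved the difficulty you flag yourself: identifying that the limit of $Y_t^{\varepsilon}$ solves the projected equation. In the diffusive case the paper has a clean object to pass to the limit, namely the bounded-variation reflecting process $\tilde R_t^{\nu,\varepsilon}$, whose properties can be checked as in \cite{choi2016propagation}; in your setting the projected velocities $P_{Y_s^{\varepsilon}}\bigl(b_\varepsilon(Y_s^{\varepsilon})\bigr)$ need not converge pointwise, since $P_x$ is discontinuous at $\partial D$, and the proposal offers no mechanism (compactness of the boundary reaction terms, or a differential-inclusion stability theorem) to carry this term through the limit. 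Both difficulties disappear entirely under the paper's direct approach.
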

\begin{proof}
	Under Assumption \ref{asum1}, it follows from \cite[Theorem 1.5, Theorem 1.6]{carrillo2014nonlocal} that the PDE \eqref{PDE} has a unique weak solution $\mu_t\in\mathcal{P}_2( D)$. Since $\nabla K$ and $\nabla V$ are continuous and bounded, $F(x_t):=-\nabla K\ast \mu_t(x_t)-\nabla V(x_t)$ is also continuous and bounded. According to  \cite[Lemma 2.4]{carrillo2014nonlocal}, there exists $Y_t$ satisfying the following ODE
	\begin{equation*}
	\rd Y_t=P_{Y_t}\left(F(Y_t)\right)\dt=P_{Y_t}\left(-\nabla K\ast \mu_t(Y_t)-\nabla V(Y_t)\right)\dt.
	\end{equation*}
	Following a similar argument as in Proposition \ref{pdetosde}, one can show that $Law(Y_t)=\mu_t$. Hence we have the existence to \eqref{SDE}.
	
	Assume $Y_t^{i}$, $(i=1,2)$ are two solutions to \eqref{SDE} with the initial data $Y_0^{i}$ respectively. Applying It\^{o}'s formula, it yields that
	\begin{align}
	|Y_t^1-Y_t^2|^2&=|Y_0^1-Y_0^2|^2+2\int_0^t  \la Y_s^1-Y_s^2, P_{Y_s^1}\left(F(Y_s^1)\right)-P_{Y_s^2}\left(F(Y_s^2)\right)\ra\ds\notag\\
	&=|Y_0^1-Y_0^2|^2+2\int_0^t  \la Y_s^1-Y_s^2, F(Y_s^1)-F(Y_s^2)\ra\ds\notag\\
	&\quad +2\int_0^t  \la Y_s^1-Y_s^2, P_{Y_s^1}\left(F(Y_s^1)\right)-F(Y_s^1)+F(Y_s^2)-P_{Y_s^2}\left(F(Y_s^2)\right)\ra\ds
	\end{align}
By the $\lambda$-convexities of $K$ and $V$, one has
	\begin{equation}
	\EE\left[\la Y_s^1-Y_s^2, F(Y_s^1)-F(Y_s^2)\ra\right]\leq -2(\lambda_K^-+\lambda_V^-)\EE\left[|Y_s^1-Y_s^2|^2\right] \,.
	\end{equation}
Moreover, it follows from the convexity of the domain $D$ that
	\begin{align}
	&\la Y_s^1-Y_s^2, P_{Y_s^1}\left(F(Y_s^1)\right)-F(Y_s^1)+F(Y_s^2)-P_{Y_s^2}\left(F(Y_s^2)\right)\ra\notag\\
	=& \la Y_s^1-Y_s^2, P_{Y_s^1}\left(F(Y_s^1)\right)-F(Y_s^1)\ra +\la Y_s^1-Y_s^2, F(Y_s^2)-P_{Y_s^2}\left(F(Y_s^2)\right)\ra\leq 0.
	\end{align}
	Here $P_{Y_s^1}\left(F(Y_s^1)\right)-F(Y_s^1)$ is in the inward normal direction to $\partial D$ at $Y_s^1$.
	Thus one has
	\begin{equation}
	\EE\left[|Y_t^1-Y_t^2|^2\right]\leq \EE\left[|Y_0^1-Y_0^2|^2\right]-2(\lambda_K^-+\lambda_V^-)\int_0^t\EE\left[|Y_s^1-Y_s^2|^2\right]\ds\,,
	\end{equation}
	which leads to \eqref{SDEstab1} by the Gronwall's inequality.
\end{proof}

\subsection{Zero-diffusion limit}
Now we can obtain an explicit convergence rate between $\mu_t^\nu$ and $\mu_t$ (weak solutions to \eqref{PDEnu} and \eqref{PDE} respectively) by using the coupling method, where we  treat $\mu_t^\nu$ and $\mu_t$ as distributions of $Y_t^\nu$ and $Y_t$ respectively and  use the fact that
\begin{equation}
\mathcal{W}_2^2(\mu_t^\nu,\mu_t)\leq  \EE\left[|Y_t^\nu-Y_t|^2\right].
\end{equation}

Our main theorem regarding the convergence of weak solutions (in the Wasserstein metric) states:
\begin{thm}\label{mainthm1}
	Assume $K$, $V$ and $D$ satisfy Assumption \ref{asum1}.  For any $T>0$, let $\mu_t^\nu$ and $\mu_t$ be weak solutions to \eqref{PDEnu} and \eqref{PDE} on $[0, T]$ respectively. Then it holds that
	\begin{equation*}
	\mathcal{W}_2^2(\mu_t^\nu,\mu_t)\leq 2d\nu t\left(1-2(\lambda_K^-+\lambda_V^-)te^{-2(\lambda_K^-+\lambda_V^-)t}\right)\quad \mbox{for all }t\in[0,T]\,,
	\end{equation*}
	where $\mathcal{W}_2$ denotes the $2$-Wasserstein metric as in \eqref{wass} and $\lambda_K^-$  and $\lambda_V^-$ are  defined in \eqref{lambda}.
\end{thm}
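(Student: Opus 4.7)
The plan is to establish the bound by a coupling argument. I would take $Y_t^\nu$ from Theorem~\ref{wellSDEnu} and $Y_t$ from Theorem~\ref{wellSDE} driven by the \emph{same} initial random variable $Y_0\sim \mu_0$ (only $Y_t^\nu$ sees the Brownian motion $B_t$). Since $Law(Y_t^\nu)=\mu_t^\nu$ and $Law(Y_t)=\mu_t$, the definition of the $2$-Wasserstein distance immediately gives
\begin{equation*}
\mathcal{W}_2^2(\mu_t^\nu,\mu_t)\leq \EE\!\left[|Y_t^\nu-Y_t|^2\right],
\end{equation*}
so it suffices to control the coupled second moment on the right.

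Next I would apply It\^o's formula to $|Y_t^\nu-Y_t|^2$. Writing $F_s^\nu(x)=-\nabla K\ast\mu_s^\nu(x)-\nabla V(x)$ and $F_s(x)=-\nabla K\ast\mu_s(x)-\nabla V(x)$, and using that $Y_t$ has zero quadratic variation (it solves a pathwise ODE) while the Brownian part of $Y_t^\nu$ contributes the It\^o correction $2\nu d\,\ds$, one obtains
\begin{align*}
|Y_t^\nu-Y_t|^2 &= 2\int_0^t\langle Y_s^\nu-Y_s,\,F_s^\nu(Y_s^\nu)-P_{Y_s}(F_s(Y_s))\rangle\,\ds + 2\sqrt{2\nu}\int_0^t \langle Y_s^\nu-Y_s, {\rm d} B_s\rangle\\
&\quad -2\int_0^t \langle Y_s^\nu-Y_s,\, {\rm d}\tilde R_s^\nu\rangle + 2\nu d\,t.
\end{align*}

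The heart of the argument is to verify that each ``boundary'' contribution has a favorable sign. For the reflecting term, ${\rm d}\tilde R_s^\nu = n(Y_s^\nu)\,{\rm d}|\tilde R^\nu|_s$ is supported where $Y_s^\nu\in\partial D$ while $Y_s\in \overline D$; the convexity condition~\eqref{conxdomain} yields $\langle Y_s^\nu-Y_s,n(Y_s^\nu)\rangle\geq 0$, hence $-2\int_0^t\langle Y_s^\nu-Y_s,{\rm d}\tilde R_s^\nu\rangle\leq 0$. For the projection operator in the drift, I would split
\begin{equation*}
F_s^\nu(Y_s^\nu)-P_{Y_s}(F_s(Y_s))=\bigl[F_s^\nu(Y_s^\nu)-F_s(Y_s)\bigr]+\bigl[F_s(Y_s)-P_{Y_s}(F_s(Y_s))\bigr].
\end{equation*}
The ``projection defect'' $F_s(Y_s)-P_{Y_s}(F_s(Y_s))$ is either zero or a non-negative multiple of the outward normal $n(Y_s)$, and convexity again gives $\langle Y_s^\nu-Y_s,n(Y_s)\rangle\leq 0$, so its pairing with $Y_s^\nu-Y_s$ is non-positive; this mirrors the argument already used at the end of the proof of Theorem~\ref{wellSDE}.

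What remains is the genuine drift difference $F_s^\nu(Y_s^\nu)-F_s(Y_s)$. For the external potential I would use the $\lambda_V$-convexity of $V$ pointwise (followed by $\lambda_V\geq \lambda_V^-$) to bound its contribution by $-2\lambda_V^-\int_0^t\EE[|Y_s^\nu-Y_s|^2]\,\ds$. For the convolution part I would invoke Lemma~\ref{lmconvex} with $X_1=Y_s^\nu$ and $X_2=Y_s$ (whose joint law is specified by the coupling), giving the bound $-2\lambda_K^-\int_0^t\EE[|Y_s^\nu-Y_s|^2]\,\ds$ after taking expectation. Since the stochastic integral has zero mean, assembling all of the above yields
\begin{equation*}
\EE[|Y_t^\nu-Y_t|^2]\leq 2d\nu t -2(\lambda_K^-+\lambda_V^-)\int_0^t \EE[|Y_s^\nu-Y_s|^2]\,\ds,
\end{equation*}
and Gronwall's inequality, in the same form already used in Theorems~\ref{wellSDEnu} and~\ref{wellSDE}, delivers the stated convergence rate. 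The step I expect to require the most care is the simultaneous treatment of the reflecting measure ${\rm d}\tilde R_s^\nu$ and the projection operator $P_{Y_s}$: these are boundary-singular objects of different natures, and extracting clean non-positive signs from both requires combining~\eqref{conxdomain} with the geometric definition of $P_x$ in a matched way.
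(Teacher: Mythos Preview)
Your proposal is correct and follows essentially the same approach as the paper: couple $Y_t^\nu$ and $Y_t$ through the same initial datum, apply It\^o's formula, discard both boundary contributions via the convexity of $D$ (using \eqref{conxdomain} for the reflecting term and the geometric structure of $P_{Y_s}$ for the projection defect), then invoke Lemma~\ref{lmconvex} and the $\lambda_V$-convexity of $V$ before closing with Gronwall. The only detail the paper makes slightly more explicit is the justification that the stochastic integral has zero mean, which follows from the boundedness of $D$ via \eqref{verify}--\eqref{EB0}.
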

\begin{proof}
	Suppose that $(Y_t^\nu,\tilde R_t^\nu)$ and $Y_t$ satisfy \eqref{SDEnu} and \eqref{SDE} respectively. Then it follows from  It\^{o}'s formula that
	\begin{align}\label{ito}
	|Y_t^\nu-Y_t|^2&=2\int_0^t\viint{Y_s^\nu-Y_s}{-\nabla K\ast\mu_s^\nu(Y_s^\nu)-\nabla V(Y_s^\nu)-P_{Y_s}\left(-\nabla K\ast \mu_s(Y_s)-\nabla V(Y_s)\right)}\ds\notag\\
	&\quad+2\sqrt{2\nu}\viint{Y_s^\nu-Y_s}{{\rm d} B_s}-2\int_0^t\viint{Y_s^\nu-Y_s}{n(Y_s^\nu)}{\rm d}|\tilde{R}^\nu|_s+2d\nu t\notag\\
	&\leq  2\int_0^t\viint{Y_s^\nu-Y_s}{-\nabla K\ast\mu_s^\nu(Y_s^\nu)-\nabla V(Y_s^\nu)-P_{Y_s}\left(-\nabla K\ast \mu_s(Y_s)-\nabla V(Y_s)\right)}\ds\notag\\
	&\quad+2\sqrt{2\nu}\int_0^t\viint{Y_s^\nu-Y_s}{{\rm d} {B}_s}+2d\nu t,
	\end{align}
	where in the second inequality we have used the the convexity of the domain $D$. Denote that
	\begin{equation}
	\xi_s^\nu(Y_s^\nu):=-\nabla K\ast\mu_s^\nu(Y_s^\nu)-\nabla V(Y_s^\nu)\,,\quad
	\xi_s(Y_s):=-\nabla K\ast \mu_s(Y_s)-\nabla V(Y_s)\,.
	\end{equation}
By the convexity of the domain $D$, we have 	
\begin{equation}
   \viint{Y_s^\nu-Y_s}{\xi_s(Y_s)-P_{Y_s}\left(\xi_s(Y_s)\right)}\leq 0 \,.
\end{equation}
Thus
\begin{align} \label{bound:Y-nu-Y-bdry}
	&\viint{Y_s^\nu-Y_s}{\xi_s^\nu(Y_s^\nu)-P_{Y_s}\left(\xi_s(Y_s)\right)}\notag\\
	=&\viint{Y_s^\nu-Y_s}{\xi_s^\nu(Y_s^\nu)-\xi_s(Y_s)}+\viint{Y_s^\nu-Y_s}{\xi_s(Y_s)-P_{Y_s}\left(\xi_s(Y_s)\right)}\notag\\
	\leq&\viint{Y_s^\nu-Y_s}{\xi_s^\nu(Y_s^\nu)-\xi_s(Y_s)},
	\end{align}
Applying~\eqref{bound:Y-nu-Y-bdry} and the $\lambda_V$ convexity of $V$ in~\eqref{ito}, we obtain the bound
	\begin{align}\label{aftercon}
	|Y_t^\nu-Y_t|^2&\leq 2\int_0^t\viint{Y_s^\nu-Y_s}{-\nabla K \ast\mu_s^\nu(Y_s^\nu)+\nabla K\ast\mu_s(Y_s)}\ds+2\int_0^t\viint{Y_s^\nu-Y_s}{-\nabla V(Y_s^\nu)+\nabla V(Y_s)}\ds\notag\\
	&\quad+2\sqrt{2\nu}\int_0^t\viint{Y_s^\nu-Y_s}{{\rm d}{B}_s}+2d\nu t \notag\\
	&\leq 2\int_0^t\viint{Y_s^\nu-Y_s}{-\nabla K \ast\mu_s^\nu(Y_s^\nu)+\nabla K\ast\mu_s(Y_s)}\ds-2\lambda_V^-\int_0^t |Y_s^\nu-Y_s|^2\ds\notag\\
	&\quad+2\sqrt{2\nu}\int_0^t\viint{Y_s^\nu-Y_s}{{\rm d}{B}_s}+2d\nu t \,.
	\end{align}
Next we take expectation on \eqref{aftercon} and use Lemma \ref{lmconvex} to obtain that
	\begin{align}
	\EE\left[|Y_t^\nu-Y_t|^2\right]\leq -2(\lambda_K^-+\lambda_V^-)\int_0^t\EE[|Y_s^\nu-Y_s|^2]\ds+2\sqrt{2\nu}\EE\left[\int_0^t\viint{Y_s^\nu-Y_s}{{\rm d}{B}_s}\right]+2d\nu t.
	\end{align}
	Since the domain $D$ is bounded, it holds that
	\begin{equation}\label{verify}
	\EE\left[|Y_t^{\nu}-Y_t|^2\right]<+\infty \,.
	\end{equation}
Then according to  \cite[pp.28, Theorem 1]{gihman1979stochastic} one has
	\begin{equation}\label{EB0}
	\EE\left[\int_0^t\viint{Y_s^\nu-Y_s}{{\rm d}{B}_s}\right]=0\,.
	\end{equation}		
Therefore,
	\begin{align}
	\EE\left[|Y_t^\nu-Y_t|^2\right] \leq -2(\lambda_K^-+\lambda_V^-)\int_0^t\EE[|Y_s^\nu-Y_s|^2]\ds+2d\nu t\,.
	\end{align}
An application of Gronwall's inequality then yields that
	\begin{equation}
	\mathcal{W}_2^2(\mu_t^\nu,\mu_t) \leq \EE\left[|Y_t^\nu-Y_t|^2\right] \leq 2d \nu t\left(1-2(\lambda_K^-+\lambda_V^-)te^{-2(\lambda_K^-+\lambda_V^-)t}\right)\quad \mbox{for all }t\in[0,T]\,,
	\end{equation}
	which completes the proof.
\end{proof}

\section{Random particle method}
\label{sect:particle}
In this section we investigate the random particle method for approximating weak solutions of PDEs \eqref{PDEnu} and \eqref{PDE} respectively. To prove the convergence of the random particle method, we  introduce an 
auxiliary  stochastic mean-field dynamics $\{Y_t^i\}_{i=1}^N$ satisfying
\begin{equation}\label{SDEnusystem}
\left\{\begin{array}{l}
Y^{\nu,i}_t=Y_0-\int_{0}^t\left(\nabla K\ast\mu_s^\nu(Y^{\nu,i}_s) +\nabla V(Y^{\nu,i}_s)\right)\ds+\sqrt{2\nu}{B}_t^{\nu,i}-\tilde R_t^{\nu,i}\,, \quad t\geq0\,,\quad i=1,\cdots,N,\\
\tilde R_t^{\nu,i}=\int_0^tn(Y^{\nu,i}_s) \, {\rm d}|\tilde R^{\nu,i}|_s\,,\quad |\tilde R^{\nu,i}|_t=\int_0^t\textbf{1}_{\partial D}(Y^{\nu,i}_s) \, {\rm d}|\tilde R^{\nu,i}|_s\,,\\
\end{array}\right.
\end{equation}
where $(\mu_t^\nu)_{t\geq0}$ is the common distribution  of $\{({Y}_t^{\nu,i})_{t\geq 0}\}_{i=1}^N$  and $\{({B}_t^{\nu,i})_{t\geq 0}\}_{i=1}^N$ are $N$ independent Brownian motions .  Here we set the initial data $(Y_0^{\nu,i})_{i=1,\cdots, N}=(X_0^{\nu,i})_{i=1,\cdots, N}$ i.i.d. with the same distribution $\mu_0(x)$. Therefore $\{({Y}_t^{\nu,i})_{t\geq 0}\}_{i=1}^N$  are $N$ independent copies of the strong solutions to \eqref{SDEnu}. 

The following law of large numbers plays an important role in proving the convergence of the particle method:
\begin{lem}\label{lemLLN}(Law of Large Numbers) Let $\{({Y}_t^{\nu,i})_{t\geq 0}\}_{i=1}^N$ be the i.i.d. processes satisfying the SDE system \eqref{SDEnusystem} with the common distribution $\mu_t^\nu$. Then for all $t \geq 0$, we have
	\begin{equation} \label{convg:LLN}
	\EE\left[\sup\limits_{i=1,\cdots,N}\left|-\frac{1}{N-1}\sum\limits_{j\neq i}^N\nabla K(Y_t^{\nu,i}-Y_t^{\nu,j})+\int_D\nabla K(Y_t^{\nu,i}-y) \rd \mu_t^\nu(y)\right|^2\right] \leq \frac{C}{N-1}\,,
	\end{equation}
where $C$ only depends on $\norm{K}_{C^1(D)}$. 
\end{lem}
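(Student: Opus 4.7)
The proof plan is a standard law of large numbers calculation that exploits the conditional independence structure of the i.i.d. copies $Y_t^{\nu,i}$. For each fixed index $i$, introduce the random variable
\begin{equation*}
Z_i := -\frac{1}{N-1}\sum_{j\neq i}^N\nabla K(Y_t^{\nu,i}-Y_t^{\nu,j})+\int_D\nabla K(Y_t^{\nu,i}-y)\,\rd\mu_t^\nu(y),
\end{equation*}
so that the claim is about bounding $\EE[\sup_i|Z_i|^2]$. The key observation is that, conditional on $Y_t^{\nu,i}$, the remaining $N-1$ processes $\{Y_t^{\nu,j}\}_{j\neq i}$ are still i.i.d.\ with common law $\mu_t^\nu$ (they are independent copies governed by independent Brownian motions, and the initial data are i.i.d.). Consequently, the $N-1$ random vectors $\{\nabla K(Y_t^{\nu,i}-Y_t^{\nu,j})\}_{j\neq i}$ are, conditionally on $Y_t^{\nu,i}$, i.i.d.\ with common mean $\int_D\nabla K(Y_t^{\nu,i}-y)\,\rd\mu_t^\nu(y)$.

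With this setup, $Z_i$ becomes a conditionally centered empirical mean. I would expand $|Z_i|^2$ into diagonal and cross terms, then take conditional expectation given $Y_t^{\nu,i}$: by conditional independence every cross term vanishes, leaving
\begin{equation*}
\EE\bigl[|Z_i|^2\,\big|\,Y_t^{\nu,i}\bigr]=\frac{1}{(N-1)^2}\sum_{j\neq i}\EE\Bigl[\bigl|\nabla K(Y_t^{\nu,i}-Y_t^{\nu,j})-\textstyle\int_D\nabla K(Y_t^{\nu,i}-y)\,\rd\mu_t^\nu(y)\bigr|^2\,\Big|\,Y_t^{\nu,i}\Bigr].
\end{equation*}
Because $\nabla K\in C(\overline{D-D})$ is bounded on the compact set $D-D$, each summand is bounded by $4\|\nabla K\|_{C^1(D)}^2$, so the conditional variance is bounded by $4\|\nabla K\|_{C^1(D)}^2/(N-1)$. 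Taking the outer expectation removes the conditioning and yields $\EE[|Z_i|^2]\leq C/(N-1)$ for every $i$, with $C=4\|\nabla K\|_{C^1(D)}^2$; by the exchangeability of the particles, this bound is uniform in $i$.

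Finally, I would pass from the uniform in $i$ bound on $\EE[|Z_i|^2]$ to the quantity $\EE[\sup_i|Z_i|^2]$ by using exchangeability (so that $\sup_i\EE[|Z_i|^2]=\EE[|Z_1|^2]$), which is how such estimates are typically invoked in the subsequent mean-field proof; alternatively, the crude bound $\sup_i|Z_i|^2\leq \sum_{i=1}^N|Z_i|^2$ combined with exchangeability gives the same $O(1/(N-1))$ rate up to adjusting the constant.

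I expect no real obstacle: the argument is entirely elementary once one recognizes that the conditional i.i.d.\ structure is preserved after fixing $Y_t^{\nu,i}$. The only delicate point to articulate cleanly is exactly this conditional independence, i.e., that freezing one coordinate of an i.i.d.\ family leaves the others i.i.d.\ with the same marginal law, which is immediate from the product structure of the joint distribution of $\{Y_t^{\nu,i}\}_{i=1}^N$. The boundedness of $\nabla K$ on $D-D$, inherited from $K\in C^1(D-D)$ and compactness of $D$, is what makes the constant $C$ depend only on $\|K\|_{C^1(D)}$.
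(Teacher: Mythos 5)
Your argument is essentially the paper's own proof: fix an index, center the summands conditionally on $Y_t^{\nu,i}$, use the conditional independence of the remaining particles to make the cross terms vanish, and bound the conditional variance by the sup-norm of $\nabla K$ on the compact set $D-D$, yielding $\EE[|Z_i|^2]\le C/(N-1)$ for each $i$. The only point where you and the paper both gloss over the same thing is the passage to $\EE[\sup_i|Z_i|^2]$: the paper simply asserts it from the per-index bound, and neither of your two suggested fixes actually closes it, since $\sup_i\EE[|Z_i|^2]$ is not $\EE\left[\sup_i|Z_i|^2\right]$ and the crude bound $\sup_i|Z_i|^2\le\sum_i|Z_i|^2$ only gives $O(1)$ rather than $O(1/(N-1))$ after summing $N$ terms; since the paper's proof contains the identical gap, your proposal matches it in both substance and rigor.
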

\begin{proof}
Fix $i=1$ and denote
		\begin{equation*}
	Z_j:=-\nabla K(Y_t^{\nu,1}-Y_t^{\nu,j})+\int_D\nabla K(Y_t^{\nu,1}-y) \rd \mu_t^\nu(y) \,,
	\qquad j \geq 2 \,.
	\end{equation*} 
Then
	\begin{align}
	-\frac{1}{N-1}\sum\limits_{j\neq 1}^N\nabla K(Y_t^{\nu,1}-Y_t^{\nu,j})+\int_D\nabla K(Y_t^{\nu,1}-y) \rd \mu_t^\nu(y)= \frac{1}{N-1}\sum\limits_{j\neq 1}^NZ_j\,.
	\end{align}
We claim that
	\begin{equation*}
	\EE[Z_jZ_k]=0 \,,
	\qquad j \neq k \,.
	\end{equation*}
Indeed, let $Y_t^{\nu,1}$ be given and denote 
	\begin{equation*}
	\EE'[\cdot]=\EE[\cdot| Y_t^{\nu,1}]\,.
	\end{equation*}
	Since $Y_t^{\nu,j}$ and $Y_t^{\nu,k}$ are independent for $j \neq k$, it holds that ${\EE}'[Z_jZ_k]=\EE'[Z_j]\EE'[Z_k]=0$ when $j\neq  k$. Hence
	\begin{equation*}
	\EE[Z_jZ_k]={\EE}_1{\EE}'[Z_jZ_k]=0\,,
	\qquad j \neq k \, ,
	\end{equation*}
	where ${\EE}_1$ means taking expectation on $Y_t^{\nu,1}$.
	
	Hence, one concludes that
	\begin{align}\label{I_2}
	&\EE\left[\left|-\frac{1}{N-1}\sum\limits_{j\neq 1}^N\nabla K(Y_t^{\nu,1}-Y_t^{\nu,j})+\int_D\nabla K(Y_t^{\nu,1}-y) \rd \mu_t^\nu(y)\right|^2\right] \notag\\
	\leq&\frac{C}{N-1}\EE[|Z_2|^2] 
	\leq \frac{C}{N-1}\EE\left[\left(\nabla K(Y_t^{\nu,2}-Y_t^{\nu,j})\right)^2+\left(\int_D\nabla K(Y_t^{\nu,2}-y) \rd \mu_t^\nu(y)\right)^2\right]
	\leq \frac{C}{N-1}\,,
	\end{align}
	where $C$ only depends on $\norm{K}_{C^1(D)}$.  Estimate \eqref{I_2} holds for $i=2,\cdots, 3$ as well, hence~\eqref{convg:LLN} holds.
\end{proof}

\begin{thm}\label{thmmean}
	Suppose that $D$, $K$ and $V$ satisfy Assumption \ref{asum1}. For any $T>0$, let $\mu_t^\nu \in AC^2_{loc}([0,T];\mathcal{P}_2(D))$ be the weak solution to the equation \eqref{PDEnu} with initial data $\mu_0\in \mathcal{P}_2(D)$ up to time $T$. Furthermore, assume that $\{X_t^{\nu,i}\}_{i=1}^N$ satisfy the particle system \eqref{Rparticlenu} with i.i.d. initial data $\{X_0^{\nu,i}\}_{i=1}^N$ sharing the common law $\mu_0$. Then for any $1\leq p<q<\infty$, there exists a constant $C>0$ depending only on $D$, $\lambda_K$, $\lambda_V$, $\norm{K}_{C^1(D)}$, $q$ and $T$ such that
	\begin{align}\label{propogation}
	&\sup\limits_{t\in[0,T]}\EE\left[\mathcal{W}_p(\munuX,\mu_t^\nu)\right]\leq CN^{-\frac{1}{4}}+C\left\{\begin{aligned}
	&N^{-\frac{1}{2p}}+N^{-\frac{q-p}{qp}}, ~&&\mbox{if }2p>d\mbox{ and }q\neq 2p\,,\\
	&N^{-\frac{1}{2p}}\log(N+1)^{\frac{1}{p}}+N^{-\frac{q-p}{qp}},~ &&\mbox{if }2p=d\mbox{ and }q\neq 2p\,,\\
	&N^{-\frac{1}{d}}+N^{-\frac{q-p}{qp}}, ~ &&\mbox{if }2p<d\mbox{ and }q\neq \frac{d}{d-p}\,,\\
	\end{aligned}\right.
	\end{align}
	where $\munuX:=\frac{1}{N}\sum\limits_{i=1}^N\delta_{X_t^{\nu,i}}$
	is the empirical measure associated to the particle system \eqref{Rparticlenu}.
\end{thm}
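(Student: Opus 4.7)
The plan is a coupling argument combined with the classical quantitative convergence of empirical measures of i.i.d.\ samples. Introduce the auxiliary system $\{Y_t^{\nu,i}\}_{i=1}^N$ from \eqref{SDEnusystem}, where each $Y_t^{\nu,i}$ is driven by the same Brownian motion $B_t^{\nu,i}$ and starts from the same initial datum $X_0^{\nu,i}$ as $X_t^{\nu,i}$. Writing $\mu_t^{Y,\nu}:=\tfrac{1}{N}\sum_{i=1}^N\delta_{Y_t^{\nu,i}}$ for its empirical measure, the triangle inequality gives
\[
\EE[\mathcal{W}_p(\munuX,\mu_t^\nu)]
\leq \EE[\mathcal{W}_p(\munuX,\mu_t^{Y,\nu})] + \EE[\mathcal{W}_p(\mu_t^{Y,\nu},\mu_t^\nu)].
\]
The first term is the pathwise coupling error and produces the $N^{-1/4}$ contribution; the second is the i.i.d.\ empirical-measure rate and produces the three case-dependent terms from a Fournier--Guillin estimate.

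For the coupling term, set $e_t^i:=X_t^{\nu,i}-Y_t^{\nu,i}$ and apply It\^o's formula to $|e_t^i|^2$. The stochastic integrals cancel (shared Brownian motion), and the two reflection contributions $-2\int_0^t\langle e_s^i,n(X_s^{\nu,i})\rangle\, d|R^{\nu,i}|_s$ and $2\int_0^t\langle e_s^i,n(Y_s^{\nu,i})\rangle\, d|\tilde R^{\nu,i}|_s$ are each non-positive by the convexity of $D$ via \eqref{conxdomain}. The $V$-drift is bounded by $-2\lambda_V^-\int_0^t|e_s^i|^2\,ds$ using the $\lambda_V$-convexity of $V$. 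The interaction drift decomposes as $A_s^i+L_s^i$, where
\[
A_s^i=\tfrac{1}{N-1}\sum_{j\neq i}\bigl[\nabla K(Y_s^{\nu,i}-Y_s^{\nu,j})-\nabla K(X_s^{\nu,i}-X_s^{\nu,j})\bigr],
\quad
L_s^i=\nabla K{*}\mu_s^\nu(Y_s^{\nu,i})-\tfrac{1}{N-1}\sum_{j\neq i}\nabla K(Y_s^{\nu,i}-Y_s^{\nu,j}),
\]
the latter being the law-of-large-numbers fluctuation controlled by Lemma \ref{lemLLN}.

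The central manipulation is to sum over $i$ and symmetrize the $A^i$-contribution exactly as in the proof of Lemma \ref{lmconvex}: swapping $i\leftrightarrow j$ and using $\nabla K(-x)=-\nabla K(x)$ yields
\[
\sum_i\langle e_s^i,A_s^i\rangle
=\frac{1}{2(N-1)}\sum_{i\neq j}\bigl\langle (X_s^{\nu,i}-X_s^{\nu,j})-(Y_s^{\nu,i}-Y_s^{\nu,j}),\, \nabla K(Y_s^{\nu,i}-Y_s^{\nu,j})-\nabla K(X_s^{\nu,i}-X_s^{\nu,j})\bigr\rangle,
\]
which $\lambda_K$-convexity bounds below by $\tfrac{\lambda_K^-}{N-1}\sum_{i\neq j}|e_s^i-e_s^j|^2$; expanding $|e^i-e^j|^2\leq 2(|e^i|^2+|e^j|^2)$ produces a bound in terms of $\sum_i|e_s^i|^2$. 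The $L^i$-term is handled by Young's inequality $2\langle e_s^i,L_s^i\rangle\leq |e_s^i|^2+|L_s^i|^2$ together with $\EE|L_s^i|^2\leq C/(N-1)$ from Lemma \ref{lemLLN}. Setting $g(t):=\sum_i\EE|e_t^i|^2$ and applying Gronwall to the resulting integral inequality gives a uniform-in-$N$ bound on $g(t)$. Since $\mathcal{W}_p^p(\munuX,\mu_t^{Y,\nu})\leq \tfrac{1}{N}\sum_i|e_t^i|^p$ via the natural coupling of empirical measures, two applications of Jensen's inequality then yield the claimed $N^{-1/4}$ estimate on $\EE[\mathcal{W}_p(\munuX,\mu_t^{Y,\nu})]$.

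For the term $\EE[\mathcal{W}_p(\mu_t^{Y,\nu},\mu_t^\nu)]$, observe that $\{Y_t^{\nu,i}\}_{i=1}^N$ are i.i.d.\ with common law $\mu_t^\nu$, and the boundedness of $D$ guarantees that all moments of $\mu_t^\nu$ are finite, so any exponent $q>p$ is admissible. The Fournier--Guillin quantitative estimate for the convergence of empirical measures of i.i.d.\ samples then delivers precisely the three case-dependent bounds on the right-hand side of \eqref{propogation}, with the trichotomy determined by the sign of $2p-d$. The main obstacle is the coupling step: because $\nabla K$ is only assumed $\lambda_K$-convex (not globally Lipschitz), no per-particle estimate of $\langle e^i,A^i\rangle$ is available, and the symmetrization above—which crucially uses both the symmetry of $K$ and the convexity of $D$—is indispensable. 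A secondary care point is to keep each of the two boundary integrals separately non-positive by pairing the reflection at $\partial D$ of one trajectory against the interior position of the other before applying \eqref{conxdomain}.
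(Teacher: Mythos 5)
Your proposal follows essentially the same route as the paper's proof: the same synchronous coupling with the i.i.d.\ auxiliary system \eqref{SDEnusystem}, the same symmetrization of the interaction term using the symmetry and $\lambda_K$-convexity of $K$ (as in Lemma \ref{lmconvex}), the same use of Lemma \ref{lemLLN} for the law-of-large-numbers fluctuation, and the same combination of the Fournier--Guillin estimate with the triangle inequality. The only (harmless) deviations are that you absorb the fluctuation term via Young's inequality on its second moment rather than the paper's first-moment bound $\EE\bigl[\sup_i|\mathcal{H}_s^i|\bigr]\leq C/\sqrt{N-1}$, and that you pass to $\mathcal{W}_p$ through $\frac{1}{N}\sum_i|e_t^i|^p$ rather than through $\mathcal{W}_\infty\leq\sup_i|e_t^i|$ --- both choices yield a coupling error at least as good as the claimed $N^{-1/4}$ (for $p>2$ your second Jensen step should be replaced by the boundedness of $D$, giving $N^{-1/p}$, which is still dominated by the right-hand side of \eqref{propogation}).
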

\begin{proof}
	First, we show that the $N$ interacting processes $\{X_t^{\nu,i}\}_{i=1}^N$ of \eqref{Rparticlenu} well approximate the processes $\{Y_t^{\nu,i}\}_{i=1}^N$ of \eqref{SDEnusystem}  as $N\rightarrow \infty$. It follows from It\^{o}'s formula that
	\begin{align}\label{Ito3}
	|X_t^{\nu,i}-Y_t^{\nu,i}|^2&=2\int_0^t\left\la X_s^{\nu,i}-Y_s^{\nu,i}, -\frac{1}{N-1}\sum\limits_{j\neq i}^N\nabla K(X_s^{\nu,i}-X_s^{\nu,j})+\int_D\nabla K(Y_s^{\nu,i}-y) \rd \mu_s^\nu(y)\right\ra \ds\notag\\
	&\quad+2\int_0^t\left\la X_s^{\nu,i}-Y_s^{\nu,i}, -\nabla V(X_s^{\nu,i})+\nabla V(Y_s^{\nu,i})\right\ra \ds\notag\\
	&\quad-2\int_0^t\la X_s^{\nu,i}-Y_s^{\nu,i}, n(X_s^{\nu,i})\ra {\, \rm d}|R^{\nu,i}|_s-2\int_0^t\la Y_s^{\nu,i}-X_s^{\nu,i}, n(Y_s^{\nu,i})\ra {\, \rm d}|\tilde R^{\nu,i}|_s \notag \\
	&\leq2\int_0^t\left\la X_s^{\nu,i}-Y_s^{\nu,i}, -\frac{1}{N-1}\sum\limits_{j\neq i}^N\nabla K(X_s^{\nu,i}-X_s^{\nu,j})+\int_D\nabla K(Y_s^{\nu,i}-y) \rd \mu_s^\nu(y)\right\ra \ds\notag\\
	&\quad-2\lambda_V^-\int_0^t |X_s^{\nu,i}-Y_s^{\nu,i}|^2 \ds \,,
	\end{align}
	where in the second inequality we have used the convexity of the domain $D$ and the $\lambda_V$ convexity of $V$. 
	Next we compute that
	\begin{align}
	&\int_0^t\left\la X_s^{\nu,i}-Y_s^{\nu,i}, -\frac{1}{N-1}\sum\limits_{j\neq i}^N\nabla K(X_s^{\nu,i}-X_s^{\nu,j})+\int_D\nabla K(Y_s^{\nu,i}-y) \rd \mu_s^\nu(y)\right\ra \ds \notag \\
	=&\frac{1}{N-1}\sum\limits_{j\neq i}^N\int_0^t\left\la X_s^{\nu,i}-Y_s^{\nu,i}, -\nabla K(X_s^{\nu,i}-X_s^{\nu,j})+\nabla K(Y_s^{\nu,i}-Y_s^{\nu,j})\right\ra \ds\notag\\
	&+\int_0^t\left\la X_s^{\nu,i}-Y_s^{\nu,i}, -\frac{1}{N-1}\sum\limits_{j\neq i}^N\nabla K(Y_s^{\nu,i}-Y_s^{\nu,j})+\int_D\nabla K(Y_s^{\nu,i}-y) \rd \mu_s^\nu(y)\right\ra \ds \notag\\
	=:& \frac{1}{N-1}\sum\limits_{j\neq i}^N \int_0^tI_s^{ij}\ds+C\int_0^t|\mathcal{H}_s^i|\ds\,,
	\end{align}	
	where $C$ only depends on $D$ and
	\begin{equation}
	\mathcal{H}_s^i =-\frac{1}{N-1}\sum\limits_{j\neq i}^N\nabla K(Y_s^{\nu,i}-Y_s^{\nu,j})+\int_D\nabla K(Y_s^{\nu,i}-y) \rd \mu_s^\nu(y).
	\end{equation}
By \eqref{lamcon}, 
	\begin{align}\label{sum}
	&\sum\limits_{ \scriptstyle  i, j =1 \atop \scriptstyle j \neq i}^N I_s^{ij}=\sum\limits_{ \scriptstyle  i, j =1 \atop \scriptstyle j \neq i}^N\viint{X_s^{\nu,i}-Y_s^{\nu,i}}{ -\nabla K ({X}_s^{\nu,i}-{X}_s^{\nu,j})+\nabla K (Y_s^{\nu,i}-Y_s^{\nu,j})}\notag\\
	=&\frac{1}{2}\sum\limits_{ \scriptstyle  i, j =1 \atop \scriptstyle j \neq i}^N\viint{X_s^{\nu,i}-Y_s^{\nu,i}-X_s^{\nu,j}+Y_s^{\nu,j}}{ -\nabla K ({X}_s^{\nu,i}-{X}_s^{\nu,j})+\nabla K (Y_s^{\nu,i}-Y_s^{\nu,j})}\notag\\
	\leq&-\frac{\lambda_K}{2}\sum\limits_{ \scriptstyle  i, j =1 \atop \scriptstyle j \neq i}^N|X_s^{\nu,i}-Y_s^{\nu,i}-X_s^{\nu,j}+Y_s^{\nu,j}|^2
	\leq -\lambda_K^-N(N-1) \sup\limits_{i=1,\cdots,N}|X_s^{\nu,i}-Y_s^{\nu,i}|^2,
	\end{align}
Summing all of \eqref{Ito3} and applying \eqref{sum}, we have
	\begin{align}
	\sum\limits_{i=1}^N 	|X_t^{\nu,i}-Y_t^{\nu,i}|^2
	\leq -2(\lambda_K^-+\lambda_V^-)N\int_0^t \sup\limits_{i=1,\cdots,N}|X_s^{\nu,i}-Y_s^{\nu,i}|^2\ds+2 CN\int_0^t\sup\limits_{i=1,\cdots,N} |\mathcal{H}_s^i|\ds\,.
	\end{align}
By the exchangeability of $\{X_t^{\nu,i}-Y_t^{\nu,i}\}_{i=1}^N$, if we take the expectation of the above inequality, then
	\begin{align}\label{before}
	\EE\left[|X_t^{\nu,i}-Y_t^{\nu,i}|^2\right]  \leq -2(\lambda_K^-+\lambda_V^-)\int_0^t\EE \left[\sup\limits_{i=1,\cdots,N}|X_s^{\nu,i}-Y_s^{\nu,i}|^2\right]\ds+2 C\int_0^t\EE\left[\sup\limits_{i=1,\cdots,N}| \mathcal{H}_s^i|\right]\ds\,.
	\end{align}
	According to Lemma \ref{lemLLN},  we have
	\begin{equation}
	\EE\left[\sup\limits_{i=1,\cdots,N} |\mathcal{H}_s^i|\right]\leq \frac{C}{\sqrt{N-1}}
\qquad \text{for all $s\in[0,t]$} \,,
	\end{equation}
	where $C$ only depends on $\norm{K}_{C^1(D)}$. 
Together with \eqref{before}, it yields that
	\begin{align}
	\EE\left[|X_t^{\nu,i}-Y_t^{\nu,i}|^2\right]  \leq -2(\lambda_K^-+\lambda_V^-)\int_0^t\EE \left[\sup\limits_{i=1,\cdots,N}|X_s^{\nu,i}-Y_s^{\nu,i}|^2\right]\ds+2 \frac{Ct}{\sqrt{N-1}}\,,
	\end{align}
	where $C$ depends only on $D$ and $\norm{K}_{C^1(D)}$. 
	Applying Gronwall's inequality, we have
	\begin{align}\label{Gron}
	\EE\left[|X_t^{\nu,i}-Y_t^{\nu,i}|^2\right]\leq \frac{Ct}{\sqrt{N-1}}\left(1-2(\lambda_K^-+\lambda_V^-)te^{-2(\lambda_K^-+\lambda_V^-)t}\right)\,,
	\end{align}
	for all $t\in[0,T]$.
	
	Now recall the definition of the empirical measures
	\begin{equation*}
	\munuX=\frac{1}{N}\sum\limits_{i=1}^N\delta_{X_t^{\nu,i}}\,,
	\qquad \mu_t^{\nu,Y}:=\frac{1}{N}\sum\limits_{i=1}^N\delta_{Y_t^{\nu,i}}\,,
	\end{equation*}
	and notice that
	\begin{equation*}
	\pi_\ast=\frac{1}{N}\sum_{i=1}^{N}\delta_{X_t^{\nu,i}}\delta_{Y_t^{\nu,i}}\in \Lambda(\munuX,\mu_t^{\nu,Y})\,.
	\end{equation*}
	Then by the definition of $\mathcal{W}_\infty$ distance one has 
	\begin{equation}\label{winf}
	\mathcal{W}_\infty(\munuX,\mu_t^{\nu,Y})\leq \pi_\ast\mathop{\mbox{-ess sup }}\limits_{(x,y)\in D\times D}|x-y|\leq\sup\limits_{i=1,\cdots,N}|X_t^{\nu,i}-Y_t^{\nu,i}| \,.
	\end{equation}
	Applying \eqref{Gron} leads to
	\begin{equation}\label{X-Y}
	\sup\limits_{t\in[0,T]}\EE\left[\mathcal{W}_\infty(\munuX,\mu_t^{\nu,Y})\right]\leq \EE\left[\sup\limits_{i=1,\cdots,N}|X_t^{\nu,i}-Y_t^{\nu,i}|\right]\leq \frac{C_T}{N^{\frac{1}{4}}}\,,
	\end{equation}
	where $C_T$ depends only on $T$, $D$, $\lambda_K^-$, $\lambda_V^-$ and $\norm{K}_{C^1(D)}$.
	
	Furthermore, the convergence estimate obtained in \cite[Theorem 1]{fournier2015rate} gives
	\begin{equation}\label{fournier}
	\EE\left[\mathcal{W}_p(\mu_t^{\nu,Y},\mu_t^\nu)\right]\leq C\left\{\begin{aligned}
	&N^{-\frac{1}{2p}}+N^{-\frac{q-p}{qp}}, ~&&\mbox{if }2p>d\mbox{ and }q\neq 2p\,,\\
	&N^{-\frac{1}{2p}}\log(N+1)^{\frac{1}{p}}+N^{-\frac{q-p}{qp}},~ &&\mbox{if }2p=d\mbox{ and }q\neq 2p\,,\\
	&N^{-\frac{1}{d}}+N^{-\frac{q-p}{qp}}, ~ &&\mbox{if }2p<d\mbox{ and }q\neq \frac{d}{d-p}\,.\\
	\end{aligned}\right.
	\end{equation}
Combining this with \eqref{X-Y} and the property of the Wasserstein distances \eqref{wrela}, we derive \eqref{propogation} by the triangle inequality such that
	\begin{align*}
	\EE\left[\mathcal{W}_p(\munuX,\mu_t^\nu)\right]&\leq \EE\left[\mathcal{W}_p(\munuX,\mu_t^{\nu,Y})\right]+\EE\left[\mathcal{W}_p(\mu_t^{\nu,Y},\mu_t^\nu)\right]\notag\\
	&\leq\EE\left[\mathcal{W}_\infty(\munuX,\mu_t^{\nu,Y})\right]+\EE\left[\mathcal{W}_p(\mu_t^{\nu,Y},\mu_t^\nu)\right]\,.
	\end{align*}
\end{proof}

It was proved in \cite{carrillo2014nonlocal} that the empirical measure associated to the particle system \eqref{Rparticle} well approximates the weak solution of \eqref{PDE}. The precise statement is
\begin{thm} \label{thmmean1}(\cite[Proposition 3.1]{carrillo2014nonlocal})
	Assume that $D$, $K$ and $V$ satisfy Assumption \ref{asum1}. For any $T>0$, let $\mu_t(x)\in AC^2_{loc}([0,T];\mathcal{P}_2( D))$ be the unique weak solution to the plain aggregation equation \eqref{PDE} with initial data $\mu_0(x)\in\mathcal{P}_2(D)$ up to the time $T$.  Furthermore, assume that $\{X_t^{i}\}_{i=1}^N$ satisfy the particle system \eqref{Rparticle} with i.i.d. initial data $\{X_0^{i}\}_{i=1}^N$ sharing the common law $\mu_0$.
	Then the empirical measure 
	\begin{equation}\label{muX}
	\mu_t^{X}:=\frac{1}{N}\sum\limits_{i=1}^N\delta_{X_t^{i}}\,,
	\end{equation}
associated to the particle system \eqref{Rparticle} satisfies
	\begin{equation}\label{prop1}
	\mathcal{W}_2(\mu_t^{X},\mu_t)\leq \exp\left(-(\lambda_K^-+\lambda_V^-)t\right)\mathcal{W}_2(\mu_0^{X},\mu_0)\,,
	\end{equation}
	for all $t\in[0,T]$, where $\lambda_K^-$  and $\lambda_V^-$ are  defined in \eqref{lambda}. Hence, 
	\begin{equation}
	\mathcal{W}_2(\mu_t^{X},\mu_t)\rightarrow 0 \quad\mbox{as }N\rightarrow \infty \,,
	\end{equation}
since the initial data satisfies~\eqref{LLN-non-diffuse}.
\end{thm}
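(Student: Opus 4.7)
The plan is to treat the empirical measure $\mu_t^X$ as a weak measure-valued solution to \eqref{PDE} and then apply a Dobrushin-type Wasserstein stability estimate, combined with a law of large numbers at $t=0$.

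First, I would verify that $\mu_t^X = \frac{1}{N}\sum_{i=1}^N \delta_{X_t^i}$ is a weak solution to \eqref{PDE} in the sense of Definition~\ref{weaksolutionpdenu}. For any $\phi \in C_c^\infty(D \times (0,T))$, differentiating $\int_D \phi(t,x)\,\rd\mu_t^X(x) = \frac{1}{N}\sum_i \phi(t, X_t^i)$ in time, substituting the particle dynamics \eqref{Rparticle}, and using $\nabla K \ast \mu_t^X(X_t^i) = \frac{1}{N}\sum_j \nabla K(X_t^i - X_t^j)$ (together with $\nabla K(0)=0$ coming from the evenness of $K$) recovers the weak formulation up to the harmless $\frac{1}{N}$ vs $\frac{1}{N-1}$ normalization. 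The boundary projection $P_x$ in the weak form is matched by the projection in the particle ODE at points where $X_t^i \in \partial D$.

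Next, I would establish the Wasserstein stability
\begin{equation*}
\mathcal{W}_2^2(\mu_t^1,\mu_t^2) \leq \exp\!\left(-2(\lambda_K^- + \lambda_V^-)t\right)\mathcal{W}_2^2(\mu_0^1,\mu_0^2)
\end{equation*}
for any two weak solutions $\mu_t^1, \mu_t^2$ of \eqref{PDE}, proceeding exactly as in Theorems~\ref{wellSDE} and~\ref{wellPDEnu}: pick an optimal coupling of $(\mu_0^1,\mu_0^2)$, lift it to two solutions $Y_t^1, Y_t^2$ of the characteristic ODE \eqref{SDE} with drift fields $-\nabla K \ast \mu_s^1 - \nabla V$ and $-\nabla K \ast \mu_s^2 - \nabla V$ respectively, compute $|Y_t^1 - Y_t^2|^2$ via the chain rule, discard the boundary terms using the convexity of $D$ as in \eqref{conxdomain}, exploit the $\lambda_K$-convexity of $K$ via Lemma~\ref{lmconvex} and the $\lambda_V$-convexity of $V$, and apply Gronwall's inequality. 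Taking $\mu_t^1 = \mu_t^X$ and $\mu_t^2 = \mu_t$ then yields \eqref{prop1}. Finally, since the initial data $\{X_0^i\}$ are i.i.d.~with law $\mu_0 \in \mathcal{P}_2(D)$, the classical law of large numbers for empirical measures in the $2$-Wasserstein distance (see~\cite[Theorem 1]{fournier2015rate}) gives $\mathcal{W}_2(\mu_0^X, \mu_0) \to 0$ almost surely as $N \to \infty$, which together with the stability estimate completes the proof.

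The main obstacle is the first step: while the chain-rule computation is transparent for interior particles, rigorously matching the projection $P_{X_t^i}$ in the particle ODE with the projection $P_x$ in the weak form of \eqref{PDE} at boundary-hitting times requires the theory of differential inclusions invoked for the well-posedness of \eqref{SDE}. The remaining two steps are essentially a transcription of arguments already carried out earlier in the paper.
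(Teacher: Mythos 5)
The paper does not actually prove this theorem---it is quoted verbatim from \cite[Proposition 3.1]{carrillo2014nonlocal}---but your reconstruction (identify $\mu_t^X$ as a weak measure-valued solution of \eqref{PDE}, prove a Dobrushin-type $\mathcal{W}_2$ stability estimate by coupling characteristics and discarding the projection terms via convexity of $D$ together with the $\lambda$-convexity of $K$ and $V$ as in Lemma \ref{lmconvex}, then invoke the law of large numbers at $t=0$) is precisely the argument of that reference and mirrors the techniques the paper itself deploys in Theorems \ref{wellSDE}, \ref{mainthm1} and \ref{thmparticle}, so it is correct and essentially the intended proof. The one point worth making explicit is the normalization issue you already flag: with the $\tfrac{1}{N-1}$ weighting in \eqref{Rparticle} and $\nabla K(0)=0$ (from evenness of $K$), the empirical measure solves \eqref{PDE} with $K$ replaced by $\tfrac{N}{N-1}K$, so \eqref{prop1} as literally stated carries a hidden $O(1/N)$ correction unless one adopts the $\tfrac1N$ normalization used in the cited reference---a defect of the transcription of the statement, not of your argument.
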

\begin{rmk}
Since $\{X_0^{i}\}_{i=1}^N$ are i.i.d. and share the the common law $\mu_0$, it follows from  \cite[Theorem 1]{fournier2015rate} that
	\begin{equation} \label{LLN-non-diffuse}
	\EE\left[\mathcal{W}_2(\mu_0^{X},\mu_0)\right]\leq C\left\{\begin{aligned}
	&N^{-\frac{1}{4}}+N^{-\frac{q-2}{2q}}, ~&&\mbox{if }4>d\mbox{ and }q\neq 4\,,\\
	&N^{-\frac{1}{4}}\log(N+1)^{\frac{1}{2}}+N^{-\frac{q-2}{2q}},~ &&\mbox{if }4=d\mbox{ and }q\neq 4\,,\\
	&N^{-\frac{1}{d}}+N^{-\frac{q-2}{2q}}, ~ &&\mbox{if }4<d\mbox{ and }q\neq \frac{d}{d-2}\,.\\
	\end{aligned}\right.
	\end{equation}
 One can now use \eqref{LLN-non-diffuse} in \eqref{prop1} to get an estimate for $\EE \left[ \mathcal{W}_2(\mu_t^{X},\mu_t) \right]$. Note that compared to this estimate, in \eqref{propogation}, the random particle method for aggregation-diffusion model \eqref{PDEnu} has an additional term $N^{-\frac{1}{4}}$. The reason is that with the presence of the diffusion, particles evolve via Brownian motion. The extra term $N^{-\frac{1}{4}}$ results from the compensation of the law of large numbers for the randomness generated by the Brownian motion.
\end{rmk}

Theorem \ref{thmmean} and Theorem \ref{thmmean1} show that the empirical measures $\munuX$ and $\mu_t^{X}$ associated to the particle methods approximate the exact solutions $\mu_t^\nu$ and $\mu_t$ to equations \eqref{PDEnu} and \eqref{PDE}, respectively, as $N\rightarrow \infty$. The following theorem establishes the convergence  of $\mu_t^\nu$ to $\mu_t$ as $\nu \to 0$ (see Theorem \ref{mainthm1}) at the particle level:
\begin{thm}\label{thmparticle}
	Assume that $\{X_0^i\}_{i=1}^N$ are $N$ i.i.d. random variables with the common law $\mu_0$. Let $\{X_t^{\nu,i}\}_{i=1}^N$ and  $\{X_t^{i}\}_{i=1}^N$ satisfy particle systems \eqref{Rparticlenu} and \eqref{Rparticle} respectively with the same initial data $\{X_0^i\}_{i=1}^N$. Furthermore, suppose that 
	$\munuX$ and $\mu_t^{X}$	are the empirical measures associated to  particle systems \eqref{Rparticlenu} and \eqref{Rparticle} respectively.
	Then it holds that
	\begin{align}
	\EE\left[\mathcal{W}_\infty^2(\munuX,\mu_t^X)\right]\leq 2d\nu t\left(1-2(\lambda_K^-+\lambda_V^-)te^{-2(\lambda_K^-+\lambda_V^-)t}\right)\quad\mbox{ for all }t\in[0,T]\,,
	\end{align} 
	where $\lambda_K^-$  and $\lambda_V^-$ are  defined in \eqref{lambda}.
\end{thm}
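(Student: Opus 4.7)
The strategy mirrors Theorem~\ref{mainthm1} at the particle level via a synchronous coupling. Using the common initial data $X_0^{\nu,i}=X_0^{i}$, set $Z_i(t):=X_t^{\nu,i}-X_t^{i}$, so $Z_i(0)=0$. The labelled coupling $\pi_\ast=\tfrac{1}{N}\sum_{i=1}^N\delta_{(X_t^{\nu,i},X_t^{i})}\in\Lambda(\munuX,\mu_t^X)$ yields the pathwise bound $\mathcal{W}_\infty^2(\munuX,\mu_t^X)\leq\max_{i}|Z_i(t)|^2$, reducing the task to an estimate on the $|Z_i|^2$ which, combined with exchangeability of the $N$ coupled pairs $\{(X_t^{\nu,i},X_t^{i})\}_i$, will match the stated right-hand side after dividing through by $N$.

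First, I would apply It\^{o}'s formula to $\sum_{i=1}^N|Z_i(t)|^2$: the independent $d$-dimensional Brownian motions $B^{\nu,i}$ contribute $+2Nd\nu\,dt$ via quadratic variation; the drift gives $2\sum_i\langle Z_i,\,a_i^\nu(X_s^\nu)-P_{X_s^{i}}(a_i(X_s))\rangle ds$ (where $a_i^\nu,a_i$ are the drift vectors in \eqref{Rparticlenu} and \eqref{Rparticle}); and there are reflection and martingale terms. As in Theorem~\ref{mainthm1}, convexity of $D$ disposes of both boundary contributions: $\langle Z_i,n(X^{\nu,i})\rangle\geq 0$ on $\partial D$ kills the reflection term, while $a_i-P_{X^i}(a_i)$ lies in the outward normal direction at $X^i$, so $\langle Z_i, a_i-P_{X^i}(a_i)\rangle\leq 0$. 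The $V$-part of the residual drift is bounded by $-\lambda_V^-\sum_i|Z_i|^2$ via $\lambda_V$-convexity. The $K$-part, the particle analog of Lemma~\ref{lmconvex}, is handled by symmetrising
\begin{equation*}
  -\tfrac{1}{N-1}\sum_{i\neq j}\bigl\langle Z_i,\,\nabla K(X^{\nu,i}-X^{\nu,j})-\nabla K(X^{i}-X^{j})\bigr\rangle
\end{equation*}
using $K(-x)=K(x)$ into $-\tfrac{1}{2(N-1)}\sum_{i\neq j}\langle Z_i-Z_j,\nabla K(a)-\nabla K(b)\rangle$ with $a-b=Z_i-Z_j$, so that $\lambda_K$-convexity combined with the identity $\sum_{i\neq j}|Z_i-Z_j|^2=2N\sum_i|Z_i|^2-2|\sum_i Z_i|^2$ controls this term by $-\lambda_K^-\sum_i|Z_i|^2$.

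Taking expectations kills the stochastic integral (boundedness of $D$ supplies the square-integrability needed to invoke \cite[p.~28, Theorem~1]{gihman1979stochastic}, exactly as at the end of Theorem~\ref{mainthm1}), leaving the differential inequality
\begin{equation*}
  \tfrac{d}{dt}\EE\sum_i|Z_i(t)|^2\leq -2(\lambda_K^-+\lambda_V^-)\,\EE\sum_i|Z_i(t)|^2+2Nd\nu.
\end{equation*}
Gronwall starting from $Z_i(0)=0$ yields $\EE\sum_i|Z_i|^2\leq 2Nd\nu t\bigl(1-2(\lambda_K^-+\lambda_V^-)te^{-2(\lambda_K^-+\lambda_V^-)t}\bigr)$; dividing by $N$ and combining with the labelled-coupling bound $\mathcal{W}_\infty^2\leq\max_i|Z_i|^2$ delivers the claim. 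The hardest part is the sharp symmetrisation: the crude estimate $|Z_i-Z_j|^2\leq 2|Z_i|^2+2|Z_j|^2$ would produce $-2\lambda_K^-$ instead of $-\lambda_K^-$, so one must retain the nonnegative $|\sum_i Z_i|^2$ correction and discard it only in the direction that weakens the inequality. A secondary subtlety is separating cleanly the two distinct boundary contributions---reflection of $X^{\nu,i}$ at $\partial D$ versus projection at $X^{i}$---so that convexity of $D$ yields the correct sign in each case.
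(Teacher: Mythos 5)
Your strategy is exactly the paper's: synchronous coupling through the common initial data, It\^{o}'s formula applied to the squared differences $Z_i=X^{\nu,i}-X^i$, convexity of $D$ to discard both the reflection term at $X^{\nu,i}$ and the projection defect at $X^{i}$, $\lambda_V$-convexity for the external potential, the symmetrization trick for the interaction sum, and Gronwall. Up to the Gronwall step the argument is sound, and your use of the identity $\sum_{i\neq j}|Z_i-Z_j|^2=2N\sum_i|Z_i|^2-2\bigl|\sum_iZ_i\bigr|^2$ is a clean way to avoid the spurious factor of $2$ (the paper instead bounds the symmetrized sum by $-\lambda_K^-N(N-1)\sup_i|Z_i|^2$).

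The genuine gap is the final step. A Gronwall bound on $\EE\sum_i|Z_i(t)|^2$, divided by $N$, controls the \emph{average} $\frac{1}{N}\EE\sum_i|Z_i(t)|^2$, which via the labelled coupling dominates $\EE[\mathcal{W}_2^2(\munuX,\mu_t^X)]$ --- not $\EE[\mathcal{W}_\infty^2]$. Since $\max_i|Z_i|^2\geq\frac{1}{N}\sum_i|Z_i|^2$ pointwise, the inequality $\mathcal{W}_\infty^2\leq\max_i|Z_i|^2$ points the wrong way to be combined with a bound on the average: as written, your argument only yields $\EE[\mathcal{W}_\infty^2(\munuX,\mu_t^X)]\leq\EE\sum_i|Z_i(t)|^2\leq 2Nd\nu t(\cdots)$, which carries an extra factor of $N$. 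The paper avoids this by never passing to the sum-then-average: its integral inequality is closed in terms of $\sup_i|Z_i(s)|^2$ (using $\sum_{i\neq j}|Z_i-Z_j|^2\leq 2N\cdot N\sup_i|Z_i|^2$ and $\sum_i|Z_i|^2\leq N\sup_i|Z_i|^2$ on the right-hand side), so that Gronwall produces a bound on $\EE[\sup_i|Z_i(t)|^2]$, which is exactly the quantity that dominates $\mathcal{W}_\infty^2$ through the labelled coupling. To repair your write-up you must carry the supremum over $i$ through the Gronwall argument (taking $\sup_i$ of the pathwise inequalities before concluding), rather than summing over $i$ and dividing by $N$; with only the averaged estimate you have proved the theorem for $\mathcal{W}_2$ but not for $\mathcal{W}_\infty$.
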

\begin{proof}
	For $i=1,\cdots,N$, one applies  It\^{o}'s formula and gets
	\begin{align}\label{eqi}
	&|X_t^{\nu,i}-X_t^i|^2\notag \\
	=&\frac{2}{N-1}\sum\limits_{j\neq i}^N\int_0^t\viint{X_s^{\nu,i}-X_s^i}{ -\nabla K ({X}_s^{\nu,i}-{X}_s^{\nu,j}) -\nabla V({X}_s^{\nu,i})- P_{X_s^i}\left(-\nabla K({X}_s^{i}-{X}_s^{j}) -\nabla V({X}_s^{i}) \right)}\ds\notag\\
	&+2\sqrt{2\nu}\viint{X_s^{\nu,i}-X_s^i}{{\rm d}{B}_s^{\nu,i}}-2\int_0^t\viint{X_s^{\nu,i}-X_s^i}{n(X_s^{\nu,i})}{\rm d}|\tilde{R}^{\nu,i}|_s+2d\nu t\notag\\
	\leq  &\frac{2}{N-1}\sum\limits_{j\neq i}^N\int_0^t\viint{X_s^{\nu,i}-X_s^i}{ -\nabla K({X}_s^{\nu,i}-{X}_s^{\nu,j})+ \nabla K({X}_s^{i}-{X}_s^{j})}\ds\notag\\
	&+2\int_0^t\viint{X_s^{\nu,i}-X_s^i}{ -\nabla V({X}_s^{\nu,i})+\nabla V({X}_s^{i})}\ds
	+2\sqrt{2\nu}\int_0^t\viint{X_s^{\nu,i}-X_s^i}{{\rm d}{B}_s^{\nu,i}}+2d\nu t \notag\\
	\leq &\frac{2}{N-1}\sum\limits_{j\neq i}^N\int_0^t\viint{X_s^{\nu,i}-X_s^i}{ -\nabla K({X}_s^{\nu,i}-{X}_s^{\nu,j})+ \nabla K({X}_s^{i}-{X}_s^{j})}\ds\notag\\
	&-2\lambda_V^-\int_0^t|X_s^{\nu,i}-X_s^i|^2\ds+2\sqrt{2\nu}\int_0^t\viint{X_s^{\nu,i}-X_s^i}{{\rm d}{B}_s^{\nu,i}}+2d\nu t\,,
	\end{align}
	where we have used the convexity of the domain and the $\lambda_V$-convexity of $V$.
	Notice that by~\eqref{lamcon},
	\begin{align}
	&\sum\limits_{ \scriptstyle  i, j =1 \atop \scriptstyle j \neq i}^N\viint{X_s^{\nu,i}-X_s^i}{ -\nabla K ({X}_s^{\nu,i}-{X}_s^{\nu,j})+\nabla K ({X}_s^{i}-{X}_s^{j})}\notag\\
	=&\frac{1}{2}\sum\limits_{ \scriptstyle  i, j =1 \atop \scriptstyle j \neq i}^N\viint{X_s^{\nu,i}-X_s^i-X_s^{\nu,j}+X_s^j}{ -\nabla K ({X}_s^{\nu,i}-{X}_s^{\nu,j})+\nabla K ({X}_s^{i}-{X}_s^{j})}\notag\\
	\leq&-\frac{\lambda_K}{2}\sum\limits_{ \scriptstyle  i, j =1 \atop \scriptstyle j \neq i}^N|X_s^{\nu,i}-X_s^i-X_s^{\nu,j}+X_s^j|^2
	\leq -\lambda_K^-N(N-1) \sup\limits_{i=1,\cdots,N}|X_s^{\nu,i}-X_s^i|^2\,,
	\end{align}
Summation over $i$ in \eqref{eqi} gives that
	\begin{align}
	\sum\limits_{i=1}^N|X_t^{\nu,i}-X_t^i|^2&\leq  -2\lambda_K^-N \int_0^t\sup\limits_{i=1,\cdots,N}|X_s^{\nu,i}-X_s^i|^2\ds-2\lambda_V^-\sum\limits_{i=1}^N\int_0^t|X_s^{\nu,i}-X_s^i|^2\ds\notag\\
	&\quad+2\sqrt{2\nu}\sum\limits_{i=1}^N\int_0^t\viint{X_s^{\nu,i}-X_s^i}{{\rm d}{B}_s^{\nu,i}}+2Nd\nu t\,,
	\end{align}
Taking expectation on the above inequality and using the exchangeability of $\{X_t^{\nu,i}-X_t^i\}_{i=1}^N$, one has
	\begin{equation}
	\EE[|X_t^{\nu,i}-X_t^i|^2]\leq -2(\lambda_K^-+\lambda_V^-)\int_0^t\EE\left[\sup\limits_{i=1,\cdots,N}|X_s^{\nu,i}-X_s^i|^2\right]\ds+2d\nu t\,.
	\end{equation}
	Applying Gronwall's inequality, we have
	\begin{equation}
	\EE\left[\sup\limits_{i=1,\cdots,N}|X_t^{\nu,i}-X_t^i|^2\right]\leq 2d\nu t\left(1-2(\lambda_K^-+\lambda_V^-)te^{-2(\lambda_K^-+\lambda_V^-)t}\right)
	\end{equation}
	for any $t\in[0,T]$. This leads to
	\begin{align*}
	\EE\left[\mathcal{W}_\infty^2(\munuX,\mu_t^X)\right]\leq \EE\left[\sup\limits_{i=1,\cdots,N}|X_t^{\nu,i}-X_t^i|^2\right]\leq 2d\nu t\left(1-2(\lambda_K^-+\lambda_V^-)te^{-2(\lambda_K^-+\lambda_V^-)t}\right)
	\end{align*}
	by using \eqref{winf}.
\end{proof}

\section{Numerical Results}
\label{sect:numerics}
In this section we numerically verify the convergence in Theorem \ref{thmparticle} using representative swarming models in subsets of $\mathbb{R}^2$. The purpose of this study is to exhibit the theoretical $\mathcal{O}(\nu)$ asymptotic convergence, improved from $\mathcal{O}(\nu^{\frac{2}{d+2}})$ in \cite{zhang2017continuity}, for interaction potentials of reduced regularity. We also show that for a small fixed $\nu>0$, the stochastic and deterministic particle systems diverge in time according to an exponential growth function that matches that of the growth bound in Theorem \ref{thmparticle}. 

All simulations in this section use Lagrangian particle-tracking with an explicit-Euler scheme suitably modified to account for boundary conditions. Computations were done in MATLAB, utilizing the computing resources at the Compute Canada's Cedar cluster. In what follows, numerical approximations will be represented with tildes, i.e. $\npartdw{n} = \left\{\npartd{n}{i}\right\}_{i=1}^N$ and $\npart{n}{} = \left\{\npart{n}{i}\right\}_{i=1}^N$ denote numerical solutions to the stochastic and deterministic particle systems \eqref{Rparticlenu} and \eqref{Rparticle}, with empirical measures $\nempdif{n}{}$ and $\nemp{n}{}$, respectively. Also, $\left\{\tau_n\right\}_{n=1}^L$ with $0=\tau_0 < \tau_1 < \cdots < \tau_L = T$ represents a partition of the interval $[0,T]$ with $\Delta t_n := \tau_{n+1}-\tau_n$. 

\subsection{Particle Method}
\begin{figure}
        \includegraphics[trim={28 120 55 100},clip,width=0.6\textwidth]{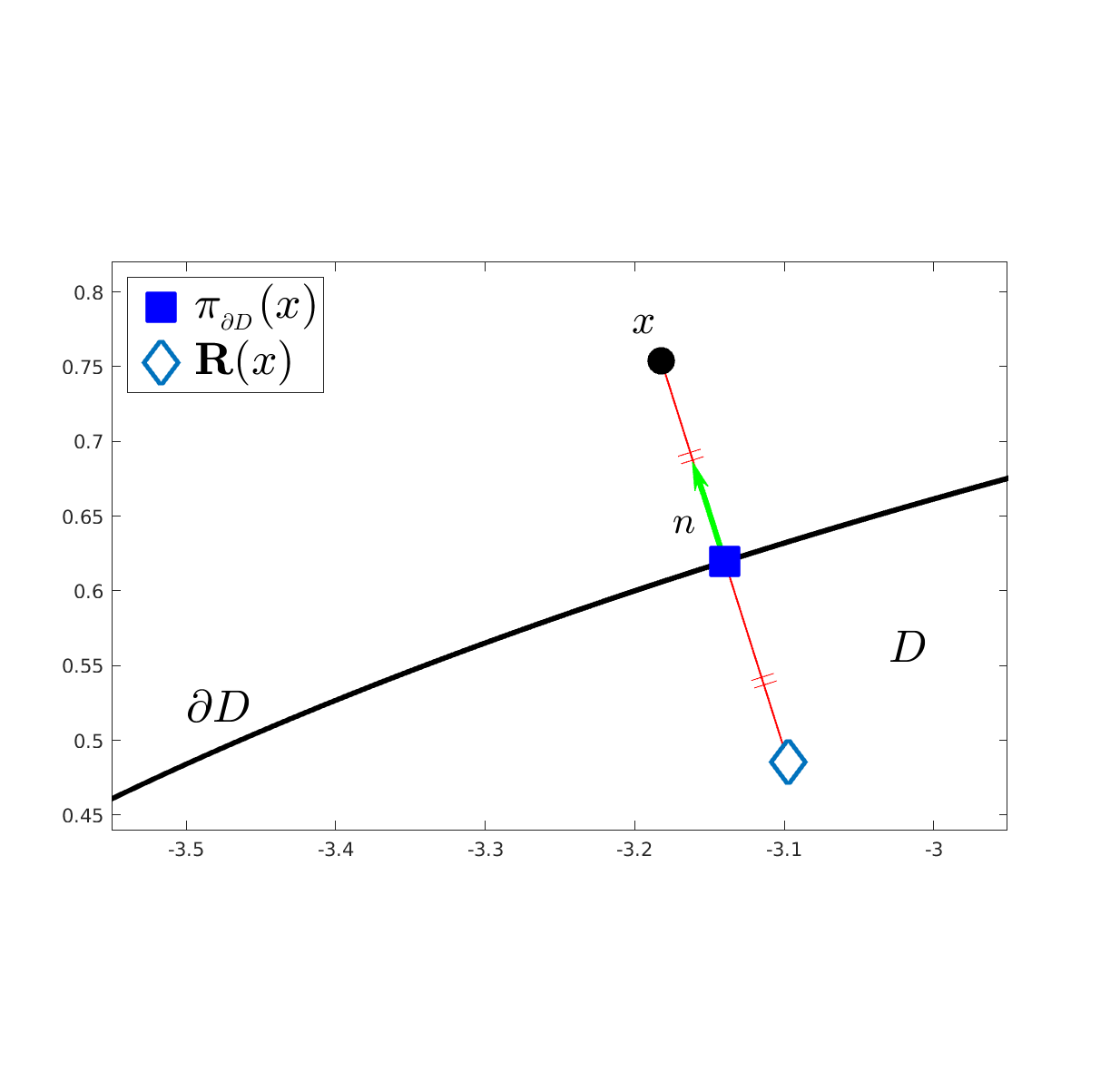}
\caption{Illustration of the projection $\bfP$ (see \eqref{eqn:proj-bdry}) and the reflection operator $\bfR$ (see \eqref{eqn:reflection-op}) for a point $x \notin D$.}
\label{bc_plot}
\end{figure}
We consider systems of $N$ particles with equal mass $1/N$. For numerical solutions of the stochastic particle system \eqref{Rparticlenu} we use the symmetrized reflection scheme from \cite{bossy2004symmetrized}. The method was shown to produce a stochastic process which converges in law to the solution of an associated partial differential equation with Neumann boundary conditions. 

For this purpose, we define the reflection operator $\bfR$:
\begin{align}
\label{eqn:reflection-op}
\bfR(x)=\begin{cases}
x, \quad&\mbox{if }x \in D,\\
x- 2 [(x-\bfP x) \cdot n ]\, n,\quad &\mbox{otherwise},
\end{cases}
\end{align}
where $\bfP x \in \partial D$ denotes the closest point on the boundary to $x \notin D$, i.e.,
\begin{equation}
\label{eqn:proj-bdry}
\vert x - \bfP x \vert= \text{min}_{z\in\partial D} \vert x-z \vert,
\end{equation}
and $n$ is the outward unit normal to $\partial D$ at $\bfP x$. Note that for a given $x \notin D$, $ \bfP x$ is uniquely determined due to the convexity of $D$. 

In words, the operator $\bfR$ takes the mirror reflection across $\partial D$ of points outside $D$ (see Figure \ref{bc_plot}). For points in small enough neighbourhoods of $D$, such reflections lie in $D$. The symmetrized Euler scheme from \cite{bossy2004symmetrized} is simply the stochastic Euler method with symmetric reflection upon crossing the boundary, which for the particle system \eqref{Rparticlenu} it amounts to:
\begin{equation} \label{eq:stochpart}
 \npartd{n+1}{i} = \bfR \biggl(  \npartd{n}{i}-\Delta t_n \Bigl(\frac{1}{N-1}\sum_{k \neq i} \nabla K (\npartd{n}{i}-\npartd{n}{k})+\nabla V(\npartd{n}{i})\Bigr)+\sqrt{2\nu \Delta t_n} N^d(0,1) \biggr).
\end{equation}

To compute numerical solutions of the non-diffusive particle system \eqref{Rparticle} we use an Euler time-stepping, and if the point exits $D$ at the end of the time step, we project the endpoint on $\partial D$ using \eqref{eqn:proj-bdry}. Hence, by defining $\pi_{_{\partial D}}$ to be the identity map for $ x \in D$, and given by the projection \eqref{eqn:proj-bdry} for $ x \notin D$, the update at time $\tau_{n+1}$ for system \eqref{Rparticle} is given by:
 \begin{equation}\label{eq:detpart}
 \npart{n+1}{i} = \pi_{_{\partial D}}\biggl( \npart{n}{i}-\Delta t_n\Bigl(\frac{1}{N-1}\sum_{k \neq i} \nabla K (\npart{n}{i}-\npart{n}{k})+\nabla V(\npart{n}{i})\Bigr)\biggr).
 \end{equation}

\subsection{Wasserstein Distance Between Empirical Measures}
\label{subsect:num-wasserstein}
From the numerical particle systems $\npartdw{n}$ and $\npart{n}{}$, we compute $\mathcal{W}^2_\infty\left(\nempdif{n},\nemp{n}\right)$ by utilizing a convenient reduction of the Monge-Kantorovich transport problem in the case of empirical measures \cite{villani2008optimal}:
\begin{equation}\label{eqn:Wdist}
\mathcal{W}^2_\infty\left(\nempdif{n},\nemp{n}\right) = \underset{\sigma\in S_N}{\inf}\left\{\sup_i\left\vert \npartd{n}{i}-\npart{n}{\sigma(i)}\right\vert^2\right\},
\end{equation}
where $S_N$ is the set of permutations on $N$ elements. Hence, we verify Theorem \ref{thmparticle} by using a simple matching algorithm to compute \eqref{eqn:Wdist}. We illustrate the procedure below.

Let $\npartdw{n}$ and $\npart{n}{}$ represent the $2N$ vertices of a complete, weighted bipartite graph $G$ with bipartite adjacency matrix $A$ and edge weights $A_{ij} =\left\vert \npartd{n}{i}-\npart{n}{j}\right\vert^2$. Finding an optimal transport plan between the associated empirical measures $\nempdif{n}$ and $\nemp{n}$ is equivalent to finding a perfect matching in $G$ with least maximum-matching cost, where a perfect matching is defined to be a collection of edges from $G$ which is pair-wise non-adjacent and visits each vertex exactly once. We do this as follows. For convenience, let $C$ be the increasing sequence of unique edge costs from $A$:  
\[C = \left\{c_k \, \Big\vert \, c_k = A_{ij} \text{  for some  } (i,j), \text{  } c_k<c_{k+1} \text{ for }  0\leq k\leq N^2-1\right\}.\]
For each $k \leq \vert C\vert$, let $B^k$ be the bipartite adjacency matrix for the subgraph of $G$ obtained by removing all edges of cost greater than $c_k$:
\[B^k_{ij} := \begin{cases} 1, & A_{ij} \leq c_k \\ 0, &A_{ij}>c_k.\end{cases}\] 
Then, 
\[\mathcal{W}^2_\infty\left(\nempdif{n},\nemp{n}\right) = \min\left\{c_k \, \Big\vert\,  B^k \text{ contains a perfect matching }\right\}.\]
Thus, to compute \eqref{eqn:Wdist}, we start with $k_0$, where $k_0$ is the smallest integer such that $B^{k_0}$ represents a spanning subgraph of $G$ (this is necessary to ensure that each particle is represented in the subgraph, and hence that a transport plan exists), then iterate through $k$ until $B^k$ contains a perfect matching. This requires that we calculate the length of the maximal matchings of $B^k$ at each $k$, for which we use MATLAB's native \mcode{dmperm} command to perform a Dulmage-Mendelsohn decomposition of $B^k$. Letting $\overline{k}$ denote the first $k$ such that $B^k$ contains a perfect matching, $B^{\overline{k}}$ then contains a one-to-one matching between $\npartdw{n}$ and $\npart{n}{}$ (that is, a transport plan between $\nempdif{n}$ and $\nemp{n}$) with least maximum-matching cost $c_{\overline{k}}= \mathcal{W}^2_\infty\left(\nempdif{n},\nemp{n}\right)$. The algorithm terminates with a worst-case running time of $\mathcal{O}(N^2(N^2-k_0))$, but in practice this appears closer to $\mathcal{O}(N^2)$, since it is rarely the case that $\overline{k} >> k_0$.
\subsection{Monte Carlo Algorithm}
To calculate the expectation in Theorem \ref{thmparticle}, we use a Monte-Carlo sampling algorithm with 250 sample trajectories for each value of $\nu$. For the $m$th sample trajectory with diffusion coefficient $\nu$ and timestep $\Delta t$ the algorithm is as follows:
\begin{enumerate}
\item Generate random initial positions $\npart{0}{} = \npartdw{0} = \left\{X^i_{0}\right\}_{i=1}^N$ with law$\left(X^i_0\right) = \rho_0$ for each $i$.
\item Simulate $N \times \lfloor T/\Delta t\rfloor$ independent standard Gaussian random variables in $d$-dimensions, $N^d(0,1)$.
\item Advance $\npart{n}{}$ and $\npartdw{n}$ independently in time according to \eqref{eq:detpart} and \eqref{eq:stochpart}.
\item At times $\tau_k$ of interest, use \eqref{eqn:Wdist} to compute the distance between the particle systems.
\end{enumerate}
Let $\avW{\tau_k} \approx \EE\left[\mathcal{W}_\infty^2(\mu_{\tau_k}^{X,\nu},\mu_{\tau_k}^X)\right]$ be the sample average of the squared Wasserstein distances between the two particle systems at times $\tau_k$. We then compute the convergence rate $p$ such that $\avW{\tau_k} = \mathcal{O}(\nu^p)$ and use a Student's $t$-distribution to compute 95\% confidence intervals for $\avW{\tau_k}$  as in \cite{kloeden1995numerical}.

\subsection{Model Parameters}
For all simulations, the number of particles is fixed at $N=1000$, initial positions are drawn from a uniform square distribution, and zero external potential is used ($V=0$). We compute the convergence rate for times $\tau_k$ in the interval $[0,1]$. 

The domains we consider are the half-plane $\hp =  [0, \infty) \times \R$, and the disk $\disk = \{|x| \leq 0.2\} $. We note that although $\hp$ is unbounded, particle systems are naturally confined by the attractive forces, hence $\hp$ is essentially equivalent to a \textit{bounded} domain which is much larger than the support of the particle systems. This represents a case of self-organization near boundaries where there is abundant free space to escape the boundary, in contrast to $\disk$, which represents a situation where the support of the swarm is confined. The two cases lead to starkly different boundary interactions at large times, as seen by the snapshots at $T=100$ in Figures \ref{simdh} and \ref{simdc}.

To highlight the reduced regularity required of the interaction potential in the theorems above, we compare convergence results for attractive-repulsive interaction potentials of two regularities: 
\begin{equation*}
\hspace{-1.5cm}\klip(x) := \frac{1}{2}|x|^2 \hspace{0.3cm}+\begin{cases} 
\frac{1-2\pi\log(\epsilon)}{4\pi}-\frac{1}{4\pi\epsilon^2}|x|^2, &\quad |x| \in[0,\epsilon)\\
-\frac{1}{2\pi}\log |x|, & \quad |x|\in[\epsilon, \infty)
\end{cases} \hspace{0.2cm} \quad \implies \quad K_2 \in W^{2,\infty}(\R^2)
\end{equation*}
\begin{equation*}
\knlip(x) := \frac{2}{3}|x|^{3/2} + \begin{cases} 
\frac{1-2\pi\log(\epsilon)}{4\pi}-\frac{1}{4\pi\epsilon^2}|x|^2, & \quad |x|\in[0,\epsilon)\\
-\frac{1}{2\pi}\log|x|, &\quad |x|\in[\epsilon, \infty).
\end{cases} \quad \implies \quad K_{3/2}\in C^1(\R^2) \setminus W^{2,\infty}(\R^2).
\end{equation*}
Both $\klip$ and $\knlip$ are $\lambda$-convex and feature an attraction term along with a $C^1$ regularization of the repulsive Newtonian potential $\phi(x) = (-\Delta)^{-1}$ which introduces jump discontinuities in the second derivative at $r=\epsilon$ (where $\epsilon$ is fixed at $0.05$). Quadratic attraction puts $\klip$ in $W^{2,\infty}(\R^2)$, while the ${3/2}$-attraction term gives $\nabla \knlip$ a Lipschitz singularity at the origin. Potentials with non-Lipschitz gradients offer a generalization of the previous analyses for the zero-diffusion limit of aggregation-diffusion equations in bounded domains \cite{zhang2017continuity,fetecau2017swarming}. Specifically, $\klip$ has sufficient regularity for the analytical results in \cite{zhang2017continuity} to apply, as opposed to $\knlip$, which satisfies the relaxed conditions in our Assumption \ref{asum1}, but not the requirements in \cite{zhang2017continuity}. With this study we also draw attention to the improved $\mathcal{O}(\nu)$ convergence rate for such potentials.

\subsection{Results}
\begin{figure}
\begin{tabular}{cc}
        \includegraphics[trim={50 20 30 25},clip,width=0.35\textwidth]{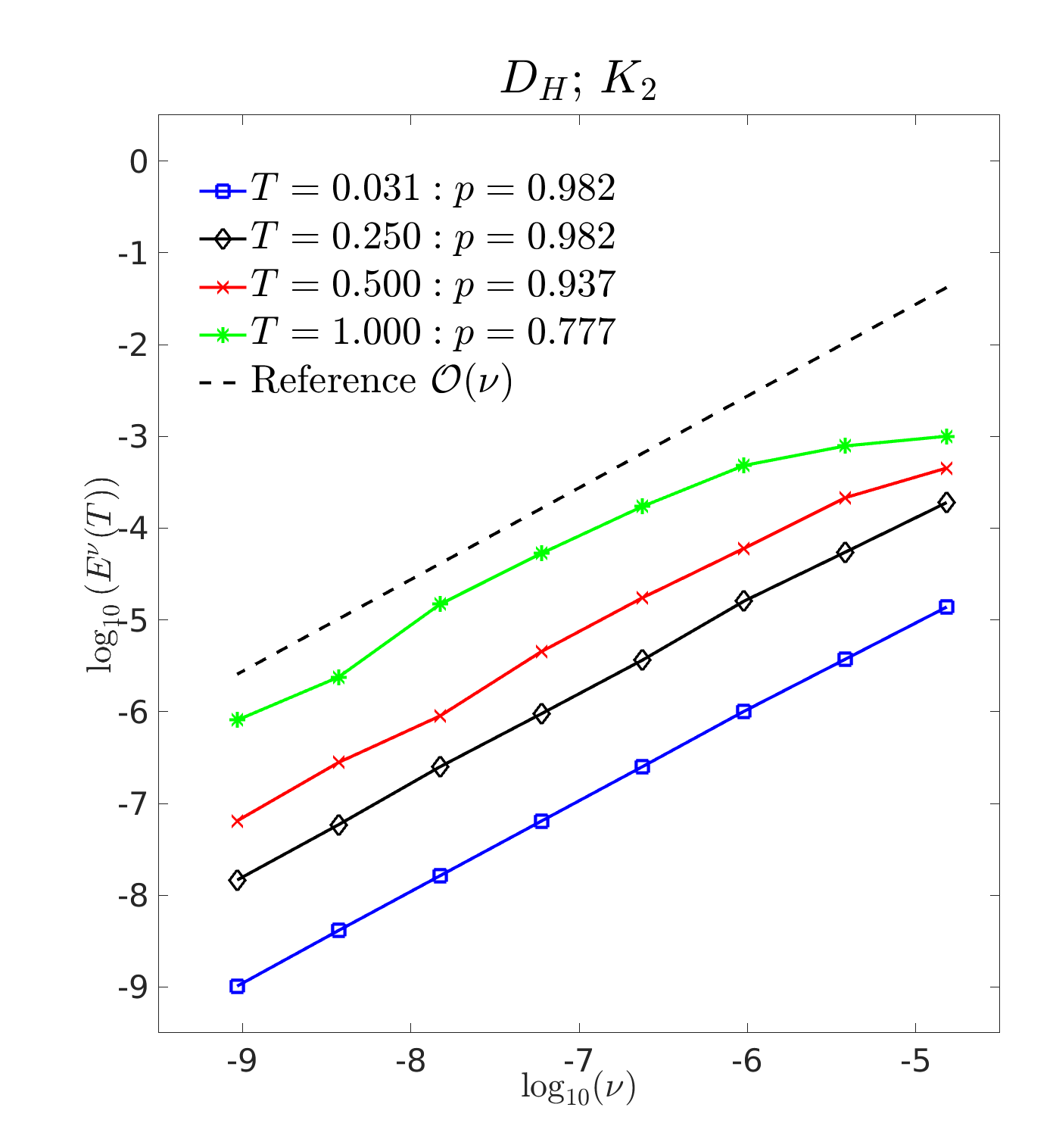} & \quad
        \includegraphics[trim={50 20 30 25},clip,width=0.35\textwidth]{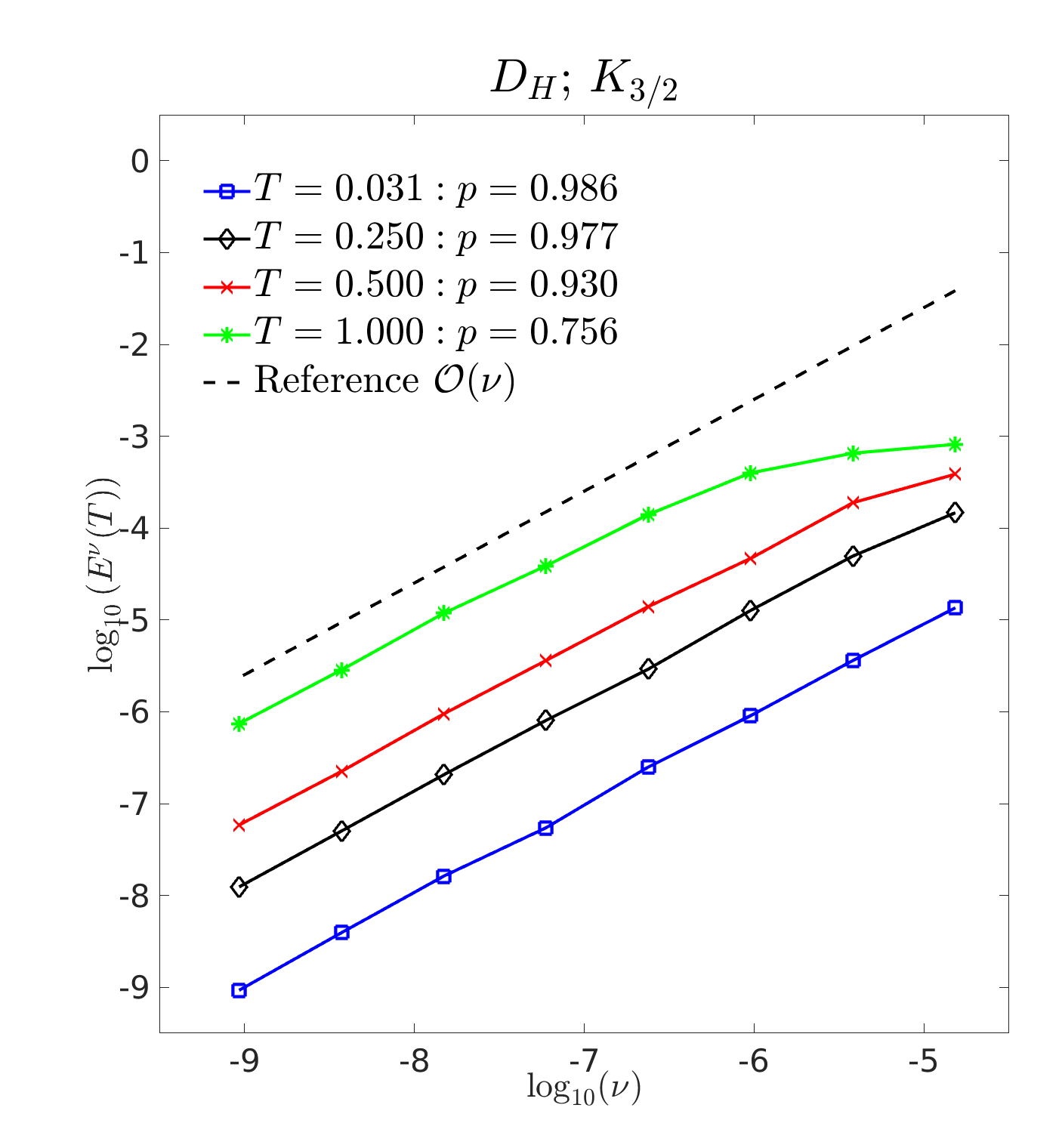}\\
        \includegraphics[trim={50 20 30 25},clip,width=0.35\textwidth]{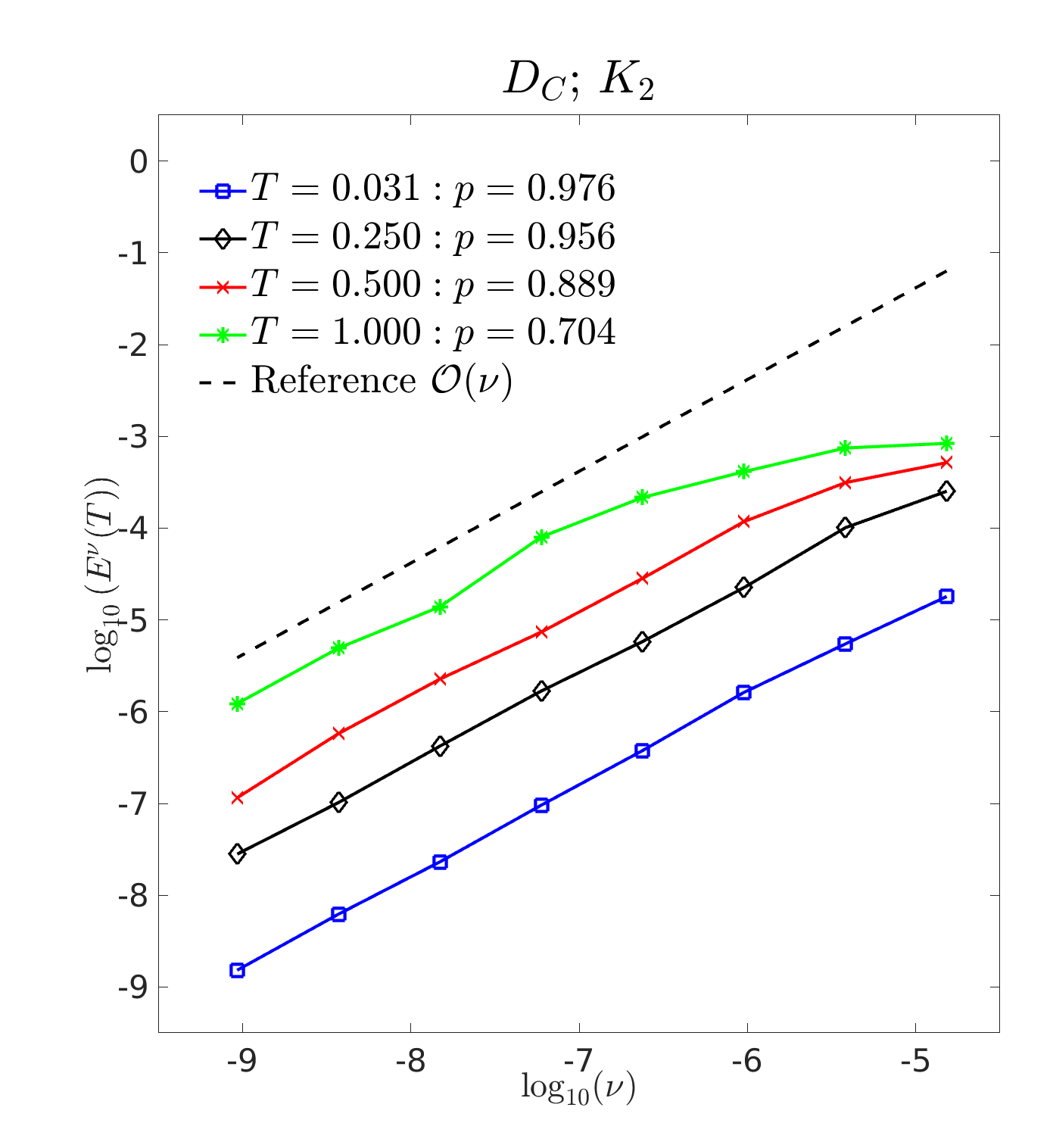}  & \quad 
        \includegraphics[trim={50 20 30 25},clip,width=0.35\textwidth]{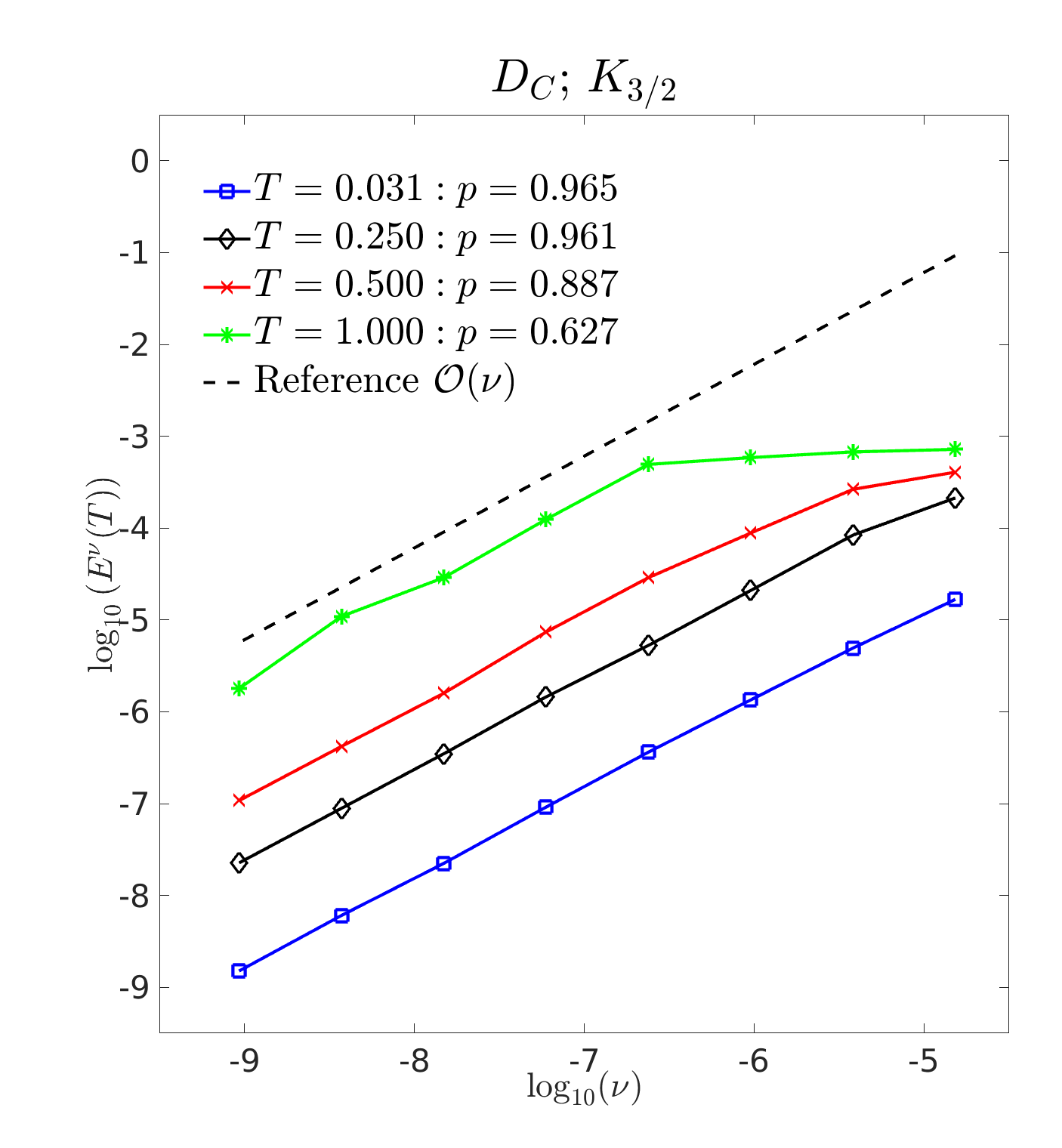}
\end{tabular}
\caption{(Top) Convergence results for $\hp = [0, \infty)\times \R$ with uniform initial distribution on $[0, 0.25]\times[-0.125, 0.125]$. (Bottom) Convergence results for $\disk = \{|x| \leq 0.2\} $ with uniform initial distribution on $[-0.05, 0.05]\times[0, 0.1]$. Left: $K_2$, right: $K_{3/2}$. Agreement with the theoretical rate of $\mathcal{O}(\nu)$ is very strong up to times near $T=1$, after which (for reasons discussed in the text) convergence at the expected rate is seen asymptotically, below a threshold value of $\nu$.}
\label{conv_all}
\end{figure}
\begin{figure}
\begin{tabular}{cc}
    \includegraphics[trim={10 8 30 5},clip,width=0.4\textwidth]{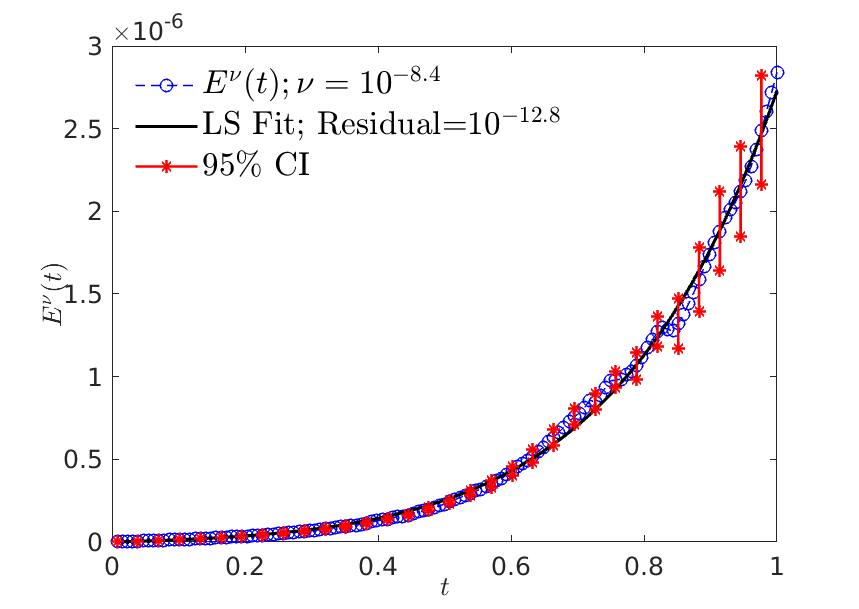} &
    \includegraphics[trim={10 8 30 5},clip,width=0.4\textwidth]{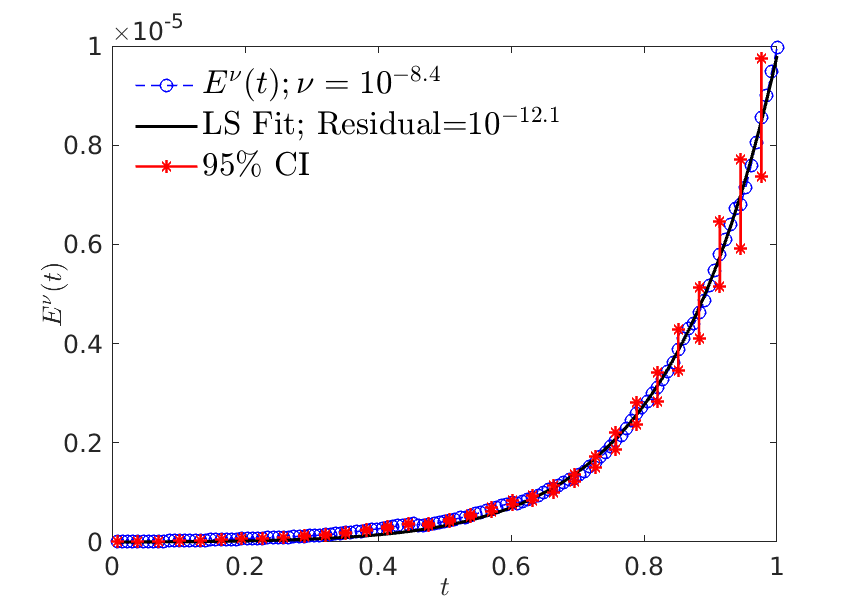}
\end{tabular}
    \caption{Evolution of $\avW{t}$ over time with $\nu = 2^{-28} \approx 10^{-8.4}$ and interaction potential $\knlip$ for the half-plane $\hp$ (left) and disk $\disk$ (right). As expected, variance of $\avW{t}$ grows significantly over time (as seen by the 95\% confidence intervals), yet we see excellent agreement using a nonlinear least-squares fit to a curve of the form $y(t) = at(1+bte^{bt})$, matching that of the bounding curve in Theorem \ref{thmparticle}.}
\label{exp_growth}
\end{figure}
\begin{figure}
\begin{tabular}{ccc}
        \includegraphics[trim={20 15 40 30},clip,width=0.32\textwidth]{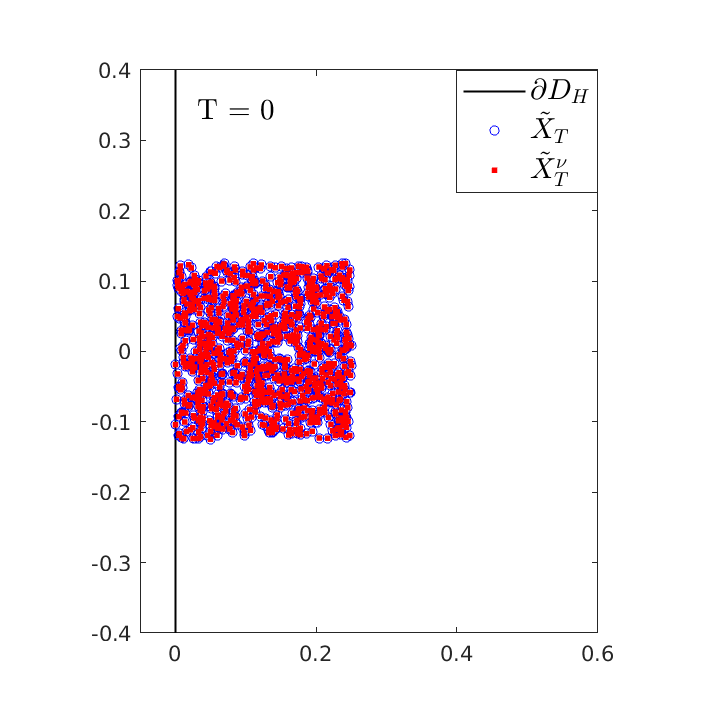} & 
        \includegraphics[trim={20 15 40 30},clip,width=0.32\textwidth]{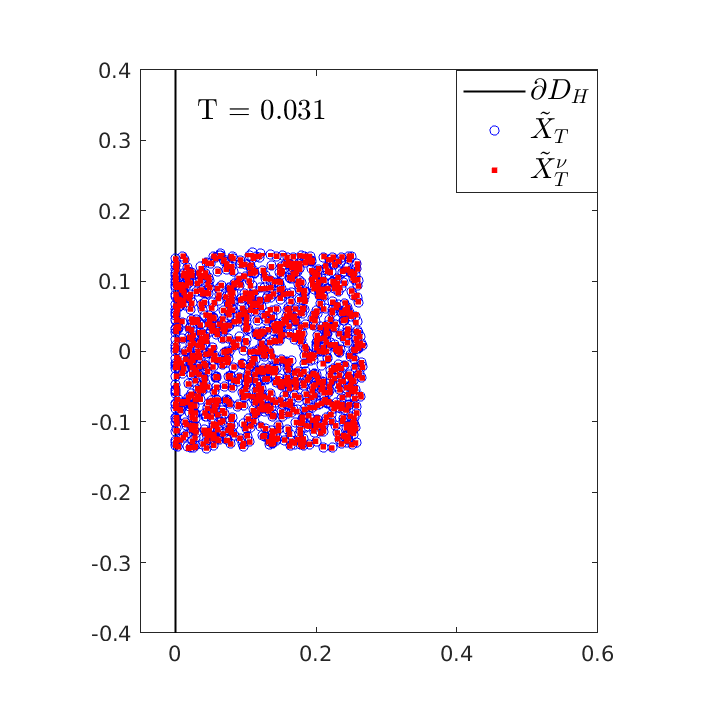} &
        \includegraphics[trim={20 15 40 30},clip,width=0.32\textwidth]{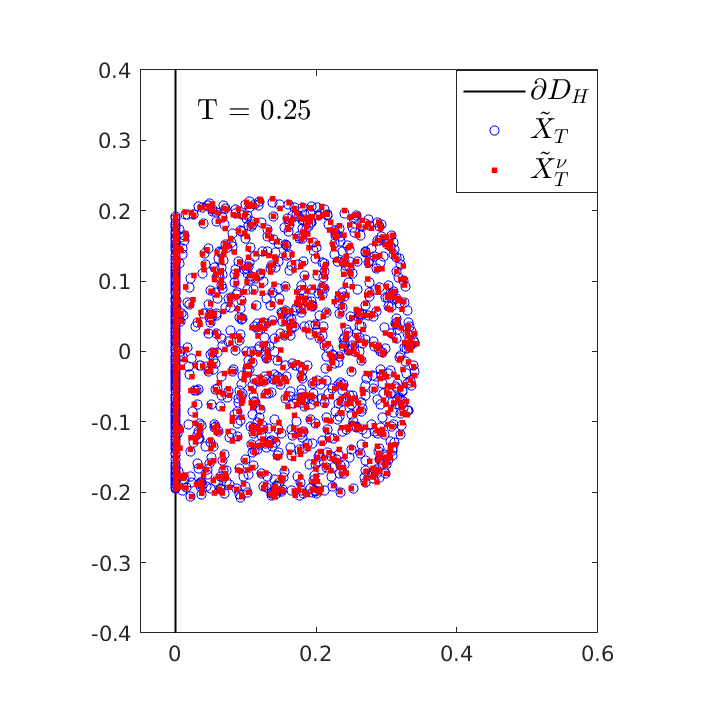} \\ 
        \includegraphics[trim={20 15 40 30},clip,width=0.32\textwidth]{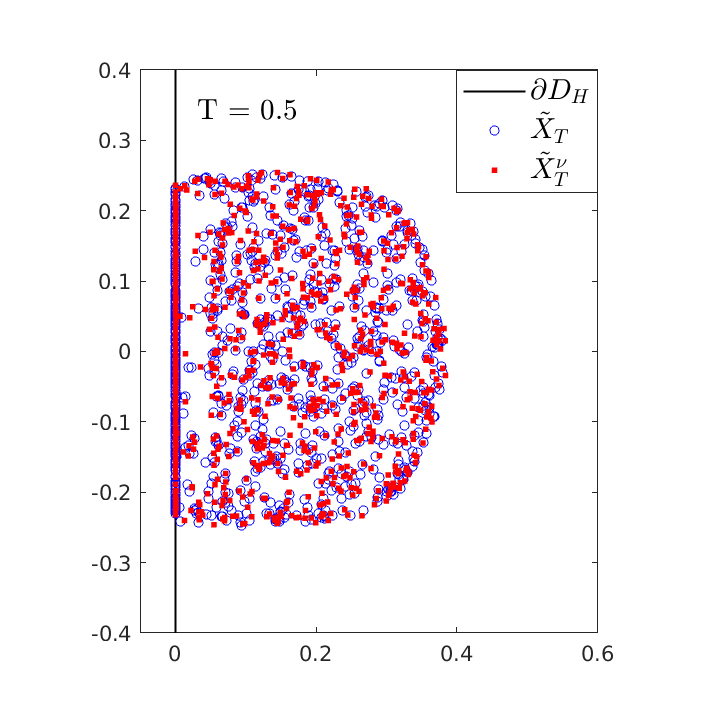} &
        \includegraphics[trim={20 15 40 30},clip,width=0.32\textwidth]{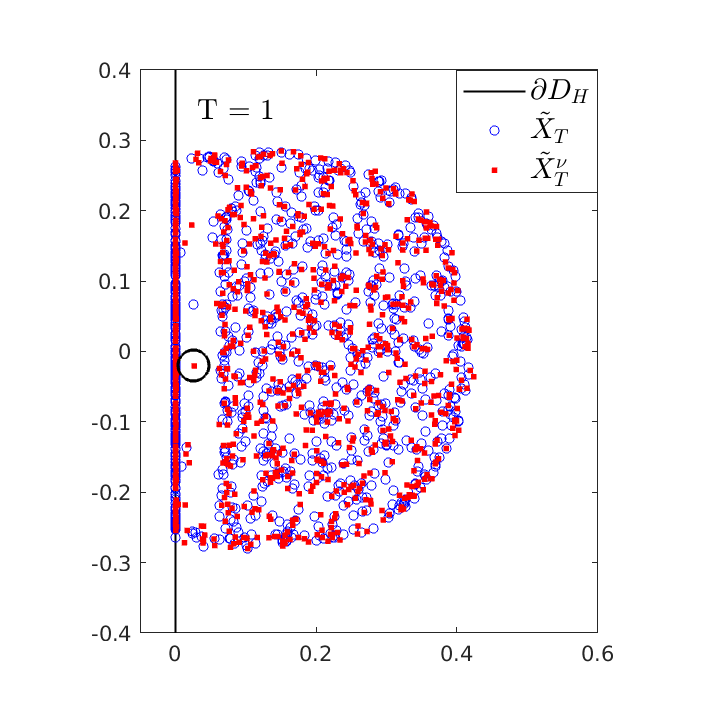} &
        \includegraphics[trim={20 15 40 30},clip,width=0.32\textwidth]{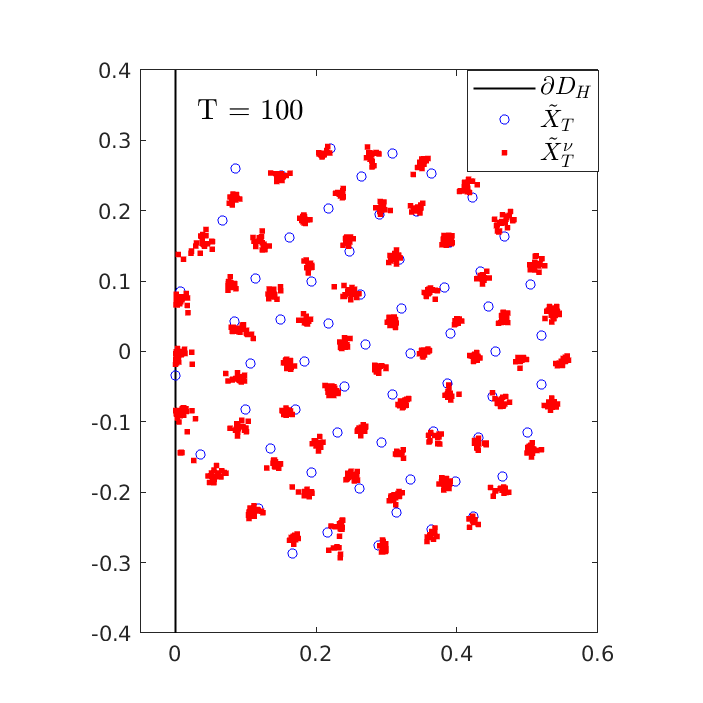}
\end{tabular}
\caption{Swarming in the half-plane $\hp$ with the interaction potential $\knlip$; here $\Delta t = 2^{-8}$ and $\nu = 2^{-16}$. Circles and filled-in squares represent non-diffusive and diffusive particles,  respectively. Particles rapidly expand until $T\approx 1$, after which attractive forces confine the swarm. At $T=1$ a representative particle is circled in the space between the boundary aggregation and the free swarm, illustrating the pair-separation effect mentioned in the text. By time $T=100$ the swarm has nearly escaped the boundary, forming a disk in free space, with diffusive particles forming concentrated clumps and non-diffusive particles forming $\delta$-aggregations.} 
\label{simdh}
\end{figure}
\begin{figure}
\begin{tabular}{ccc}
        \includegraphics[trim={10 15 20 30},clip,width=0.32\textwidth]{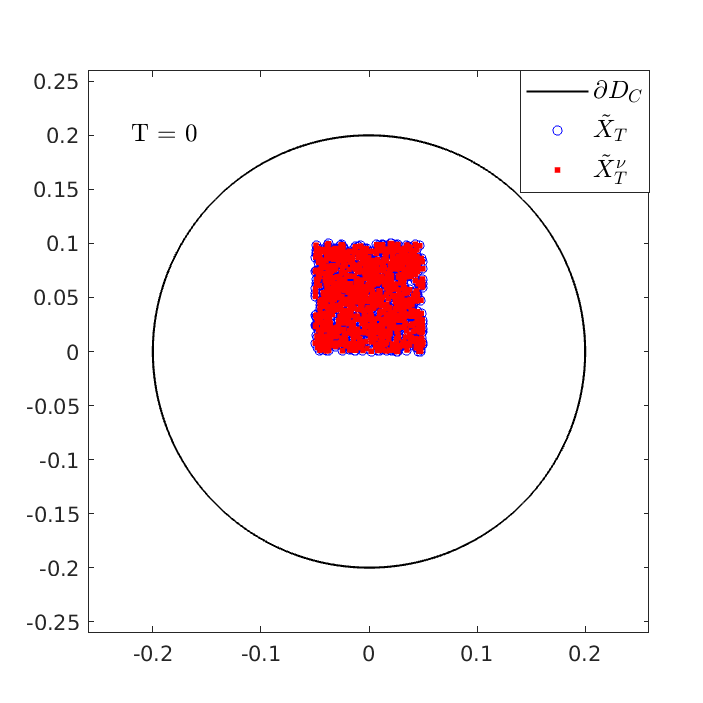} & 
        \includegraphics[trim={10 15 20 30},clip,width=0.32\textwidth]{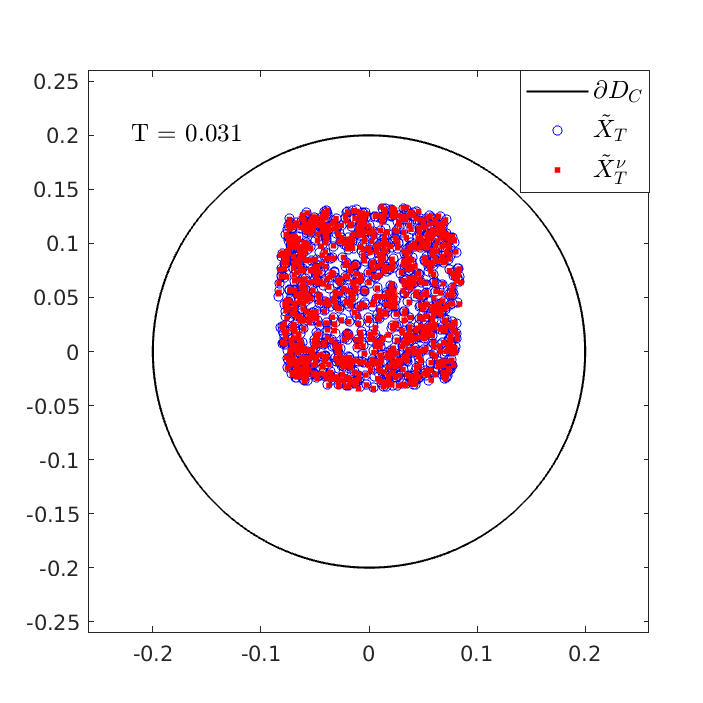} &
        \includegraphics[trim={10 15 20 30},clip,width=0.32\textwidth]{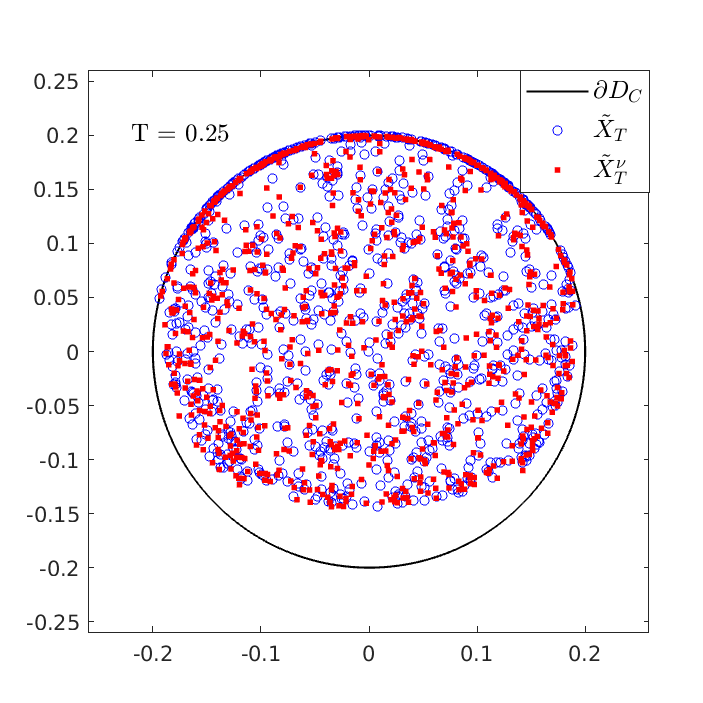} \\ 
        \includegraphics[trim={10 15 20 30},clip,width=0.32\textwidth]{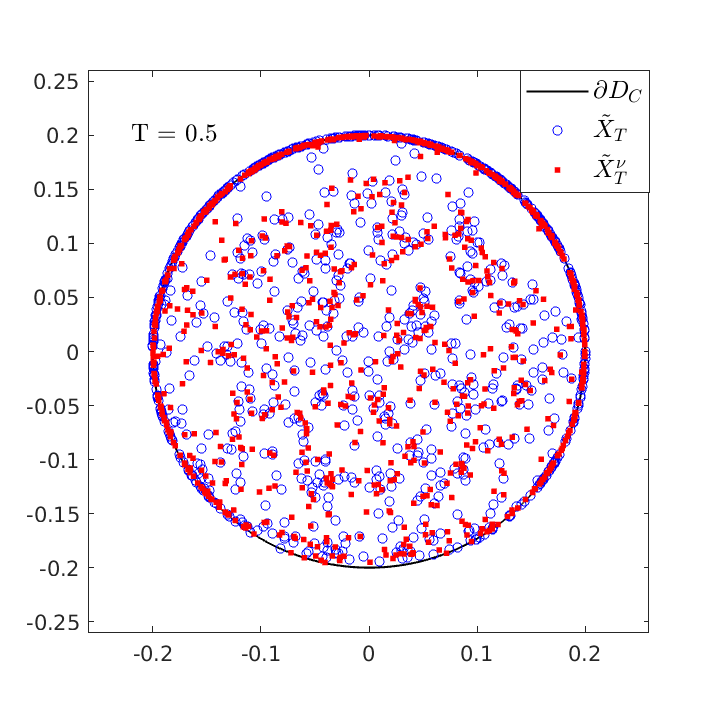} &
        \includegraphics[trim={10 15 20 30},clip,width=0.32\textwidth]{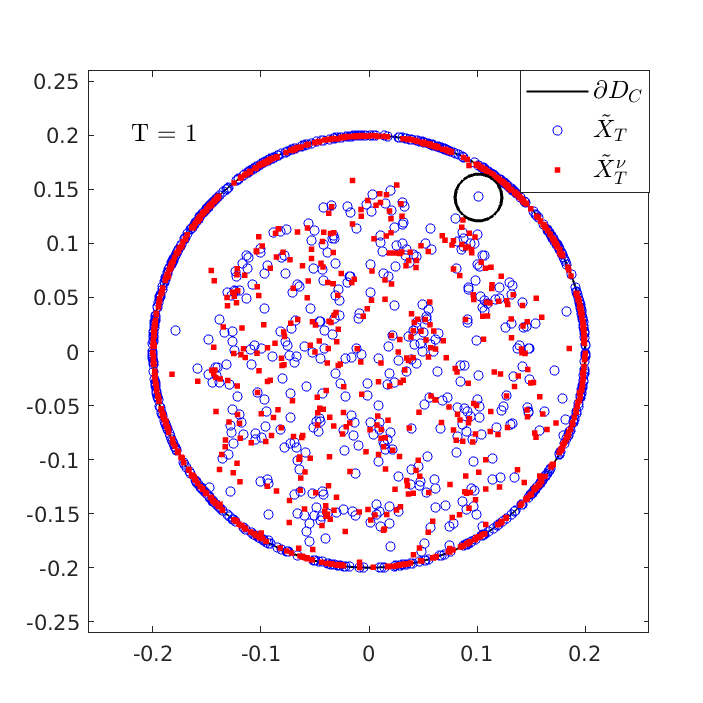} &
        \includegraphics[trim={10 15 20 30},clip,width=0.32\textwidth]{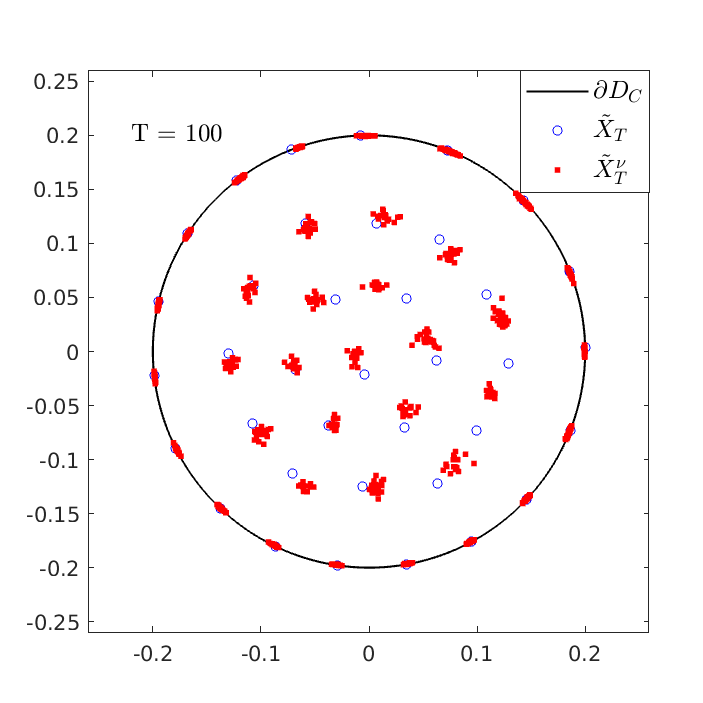}
\end{tabular}
\caption{Swarming in the disk $\disk$ with $K, \nu$ an $\Delta t$ matching that of Figure \ref{simdh}. By time $T\approx 1$, particles have formed a boundary aggregation and a free swarm as in domain $\hp$, with another representative particle (circled) caught between the boundary and free swarm. By time $T=100$ the swarm has formed a disk of particle aggregates centred in the domain along with periodic aggregations along the boundary.} 
\label{simdc}
\end{figure}

Using the Monte-Carlo algorithm above, we determine the convergence rate for each of the four $\{$domain, potential$\}$ combinations: $\{\hp, \klip\}$, $\{\hp,\knlip\}$, $\{\disk,\klip\}$ and $\{\disk,\knlip\}$ -- see Figure \ref{conv_all}. At times $T=0.031$, $T=0.25$ and $T=0.5$ the desired convergence of $\mathcal{O}(\nu)$ is clearly achieved, while by time $T=1$ the convergence behaviour becomes difficult to resolve numerically due to statistical errors (see in particular the growth of confidence intervals over time in Figure \ref{exp_growth}).

Along with convergence in $\nu$, we also examine the divergence in time of solutions on the interval $t\in[0,1]$. We observe that the divergence between empirical measures follows an exponential curve which matches the form of the bounding curve in Theorem \ref{thmparticle} to high accuracy, lending further numerical support to the theorem. Shown in Figure \ref{exp_growth}, the data $\left\{\tau_n,\avW{\tau_n}\right\}_{n=0}^L$ obtains a nonlinear least-squares residual of less than $10^{-12}$ when fit to a curve of the form $y(t) = at(1+bte^{bt})$. We include this result for the case of diffusion coefficient $\nu = 2^{-28} \approx 10^{-8.4}$ and potential $\knlip$, where the approximate exponential growth coefficient takes the value $b\approx 2.37$ on the half-plane and $b\approx 4.14$ in the disk, hence Theorem \ref{thmparticle} is satisfied as the analytical value of $b$ is two orders of magnitude larger, at $b = -2\lambda_K^- =\frac{1}{\pi\epsilon^2} \approx 127$.

We also observe some interesting phenomena when $\nu$ is larger than some threshold value which impacts the convergence rate and deserves some explanation. By time $T=1$ the swarm has separated into an aggregation on the boundary and a component in the interior of the domain which we will refer to as the ``free swarm". Particles occupying the space in between the boundary aggregation and the free swarm are either pinned to the boundary with high velocity or pulled gradually into the free swarm. This seems to be a generic effect of swarming near boundaries, occurring regardless of the choice of domain or interaction potential. With high probability in each simulation, for some $i$ the particle pair $\npartd{n}{i}$ and $\npart{n}{i}$ (sharing the same initial position) ends up separated, with one particle pulled onto the boundary and the other particle left drifting towards the free swarm (examples of this are circled in Figures \ref{simdh} and \ref{simdc} at time $T=1$). This pair-separation effect puts a lower bound on $\mathcal{W}^2_\infty\left(\nempdif{n},\nemp{n}\right)$ which disappears for small $\nu$. This seems to be the reason that $\mathcal{O}(\nu)$ convergence is not visible in the numerics for $\nu > 10^{-6}$ in the half-plane, and $\nu > 10^{-7}$ on the disk. 

The strong force with which particles are pinned to the boundary along with the resulting energy barrier between the boundary aggregation and the free swarm are some of the many fascinating differences between swarms in free space and those in domains with boundaries. As shown in \cite{fetecau2017swarm}, the plain aggregation model \eqref{PDE} in domains with boundaries tends to evolve into unstable equilibria. While the numerical studies in the present paper are exclusively designed to support Theorem \ref{thmparticle}, we plan for future work a thorough investigation of the long-time behaviour of solutions to model \eqref{PDEnu}. Of particular interest is to investigate whether/how solutions of the diffusive model (with small diffusion) bypass the unstable equilibria of the plain aggregation equation. Such regularizing effect was recently demonstrated in \cite{fetecau2017swarming} using nonlinear diffusion; however, due to the computational complexity of the PDE integrators at small diffusion values, the numerical studies there were limited to one dimension. The main advantage of the present approach is the discrete (stochastic) formulation, which potentially can enable a numerical study of the zero diffusion limit for long times in higher dimensions.

\subsection{Remarks}
We now make some remarks about the numerical method, and specifically about the symmetrized Euler scheme: 
\smallskip

 \textit{i) Weak} $\mathcal{O}(\Delta t)$: To the best of our knowledge, a numerical method for treating \textit{reflected} SDEs with a weak convergence rate higher than $\mathcal{O}(\Delta t)$ is currently lacking. It is shown in \cite{bossy2004symmetrized} that for drift coefficients
 in $C^4_b\left(D\right)$ and test functions $g \in C^5_b\left(D\right)$, the symmetrized Euler scheme exhibits the weak convergence
\begin{equation}\label{eq:weakerr_num}
e_g^\text{weak}(\Delta t) := \left\vert\mathbb{E}\left[g\left(\npartdw{n}\right)\right]- \mathbb{E}\left[g\Big(X^\nu_{\tau_n}\Big)\right]\right\vert = \tilde{C}\Delta t.
\end{equation}
To accurately compute the $\mathcal{W}^2_\infty$-distance in \eqref{eqn:Wdist}, we require weak convergence for test functions of the form $g(x) = |x-a|^2$. Such test functions are smooth, yet due to the reduced regularity of the interaction potentials in this study, we fail to meet the smoothness requirements on the drift coefficient used in \cite{bossy2004symmetrized} to prove weak $\mathcal{O}(\Delta t)$ convergence. It appears that rigorous analysis of the convergence for numerical solutions to reflected SDEs with non-Lipschitz drift is also lacking, let alone analysis for schemes applied to interacting particle systems. Hence, to justify our method, we provide numerical support instead; see Figure \ref{conv_sde} showing $e_g^\text{weak}(\Delta t) = \mathcal{O}(\Delta t)$ for particle systems using the parameters of this study.
\begin{figure}
\centering
	\includegraphics[trim={0 0 0 40},clip,width=1\textwidth]{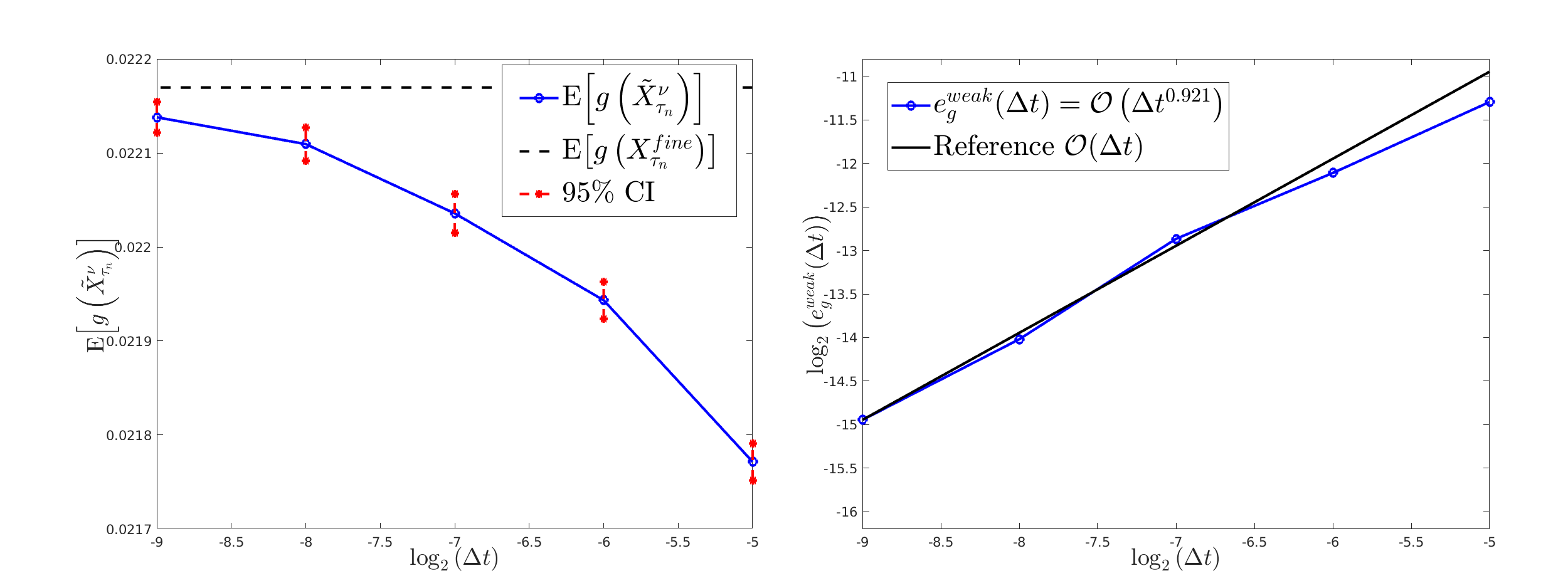}
	\caption{Weak convergence of the Euler scheme with symmetric reflection \eqref{eq:stochpart}, with diffusion coefficient $\nu = 0.01$, $N=5$ particles,  domain $D_C$ and interaction potential $\knlip$. We use the test function $g(\npartdw{n}) = \frac{1}{N}\sum_{i=1}^N\left\vert \npartd{n}{i}\right\vert^2$ which is motivated by the computation of \eqref{eqn:Wdist} as discussed in the text. In lieu of an exact solution we compare at time $\tau_n = 0.25$ to a ``fine-grid" solution $X^{fine}_{\tau_n}$ computed with timestep $\Delta t = 2^{-15}$. To approximate expectations we use $2\times 10^5$ samples of Brownian motion, each simulation sharing the same initial data. Left: semi-log plot showing convergence of $\mathbb{E}\left[g\left(\npartdw{n}\right)\right]$ to $\mathbb{E}\left[g\left(X^{fine}_{\tau_n}\right)\right]$ as well as 95\% confidence intervals. Right: log-log plot showing convergence rate at approximately $\mathcal{O}(\Delta t)$.}
\label{conv_sde}
\end{figure}
\smallskip

 \textit{ii) Refinement of} $\Delta t$:
To observe the analytical $\mathcal{O}(\nu)$ convergence rate numerically, it is necessary to suitably reduce the time-discretization error. This presents a serious bottleneck in computation, hence, instead of fixing a prohibitively small $\Delta t$ for every value of $\nu$, we successively refine $\Delta t$ using the heuristic scaling $\Delta t \sim \sqrt{\nu}$, which is based on the following argument. Computing \eqref{eqn:Wdist} relies on accurate calculation of the expectations $\mathbb{E}\left[\left\vert\npartd{n}{i}-\npart{n}{j}\right\vert^2\right]$. Fix $\tau_n$ and treat $\nu$ as an explicit parameter in the weak error \eqref{eq:weakerr_num}. For a given particle pair $(i,j)$, let $g_j(x) = \bigl \vert x-\npart{n}{j}\bigr\vert^2$ and define,
\begin{equation}\label{eq:weakerr_num1}
 e^{i,j}(\Delta t; \nu) := \left\vert\mathbb{E}\left[g_j\left(\npartd{n}{i}\right)\right]- \mathbb{E}\left[g_j\left(X^{\nu,i}_{\tau_n}\right)\right]\right\vert,
\end{equation}
that is, the weak error in approximating the particle $X^{\nu,i}_t$ at time $t=\tau_n$ using the test function $g_j$. We then observe that
\begin{align*}
\mathbb{E}\left[\left\vert\npartd{n}{i}-\npart{n}{j}\right\vert^2\right] &= \mathbb{E}\left[g_j\left(\npartd{n}{i}\right)\right] \\
& \leq e^{i,j}(\Delta t; \nu)+\mathbb{E}\left[g_j\left(X^{\nu,i}_{\tau_n}\right)\right]\\
&\leq e^{i,j}(\Delta t; \nu)+\mathbb{E}\left[\left(\left\vert X^{\nu,i}_{\tau_n}-X^j_{\tau_n}\right\vert+\left\vert X^j_{\tau_n}-\npart{n}{j}\right\vert\right)^2\right]\\
&= e^{i,j}(\Delta t; \nu)+\mathbb{E}\left[\left\vert X^{\nu,i}_{\tau_n}-X^j_{\tau_n}\right\vert^2\right]+2\left\vert X^j_{\tau_n}-\npart{n}{j}\right\vert\mathbb{E}\left[\left\vert X^{\nu,i}_{\tau_n}-X^j_{\tau_n}\right\vert\right]
+\left\vert X^j_{\tau_n}-\npart{n}{j}\right\vert^2\\
&\leq e^{i,j}(\Delta t; \nu) + \tilde{C}\left(\nu+\sqrt{\nu}\Delta t+\Delta t^2\right),
\end{align*}
for $\tilde{C}$ not depending on $\nu$ or $\Delta t$, where we have used the triangle inequality, the deterministic forward Euler error, $\left\vert X^j_{\tau_n}-\npart{n}{j}\right\vert = \mathcal{O}(\Delta t)$ and the result from \eqref{thmparticle}. Taking supremum over $(i,j)$ and using $\Delta t = \sqrt{\nu}$, we arrive at
\begin{equation}
\mathbb{E}\left[\mathcal{W}^2_\infty\left(\nempdif{n},\nemp{n}\right)\right]\leq \underbrace{\sup_{\left\{(i,j)\right\}}\left\{e^{i,j}(\sqrt{\nu}; \nu)\right\}}_{E1} +\tilde{C}\nu,
\end{equation}
justifying the scaling $\Delta t = \sqrt{\nu}$ as long as $E1$ is sufficiently small. Numerical evidence (see Figure \ref{conv_sde}) suggests that $e^{i,j}( \Delta t;\nu) = \tilde{C}(\nu)\Delta t$, and hence that $E1 = \tilde{C}(\nu)\sqrt{\nu}$, although the form of $\tilde{C}(\nu)$ is unknown. Judging by the convergence results in Figure \ref{conv_all}, however, it appears that $\tilde{C}(\nu) = \mathcal{O}(\sqrt{\nu})$, or else such convergence would not be attainable asymptotically. So we adopt the refinement $\Delta t = \sqrt{\nu}$ preemptively for now to cut computation costs, and leave rigorous justification of the scaling $\Delta t \sim \sqrt{\nu}$ for future work. 

\bibliographystyle{amsxport}
\bibliography{diffusion}


\end{document}